\documentclass[12pt]{amsart}

\usepackage{amsmath, amssymb, amsthm, mathrsfs, graphicx, hyperref,float}
\usepackage[shortlabels]{enumitem}
\usepackage[all,cmtip]{xy}
\usepackage[numbers]{natbib}
\usepackage[margin=1in]{geometry}
\usepackage{tikz-cd}
\usepackage{todonotes}

\usepackage{yfonts}

\graphicspath{{graphics/}}

\newtheorem{theorem}{Theorem}
\newtheorem{proposition}[theorem]{Proposition}
\newtheorem{lemma}[theorem]{Lemma}
\newtheorem{corollary}[theorem]{Corollary}

\theoremstyle{definition}
\newtheorem{definition}[theorem]{Definition}
\newtheorem{example}[theorem]{Example}
\newtheorem{remark}[theorem]{Remark}

\numberwithin{theorem}{section}

\makeatletter
\providecommand{\leftsquigarrow}{%
  \mathrel{\mathpalette\reflect@squig\relax}%
}
\newcommand{\reflect@squig}[2]{%
  \reflectbox{$\m@th#1\rightsquigarrow$}%
}
\makeatother

\renewcommand{\phi}{\varphi} 


\newcommand{\lev}{\mathrm{lev}}
\newcommand{\ol}[1]{\overline{#1}}

\definecolor{sebgreen1}{rgb}{0.019,0.317,0.149}
\definecolor{sebgreen2}{rgb}{0.784,0.952,0.780}
\definecolor{refred1}{rgb}{1,0,0}
\definecolor{refred2}{rgb}{1,.8,.8}


\newcommand{\A}{\mathbb{A}}

\newcommand{\G}{\mathbb{G}}
\newcommand{\N}{\mathbb{N}}

\newcommand{\Z}{\mathbb{Z}}


\newcommand{\Part}{\mathrm{part}}
\newcommand{\PPart}{\mathrm{Part}}

\newcommand{\PL}{\mathrm{PL}}

\newcommand{\Spec}{\mathrm{Spec}\,}

\newcommand{\trop}{\mathrm{trop}}


\title[Gorenstein modular compactifications of $\ol{\mathcal{M}}_{1,n}$]{A classification of modular compactifications of the space of pointed elliptic curves by Gorenstein curves}
\author{Sebastian Bozlee, Bob Kuo, and Adrian Neff}

\begin{document}

\maketitle

\begin{abstract}
We classify the Deligne-Mumford stacks $\mathcal{M}$ compactifying the moduli space $\mathcal{M}_{1,n}$ of smooth $n$-pointed curves of genus one under the condition that the points of $\mathcal{M}$ represent Gorenstein curves with
distinct smooth markings. This classification uncovers new moduli spaces $\ol{\mathcal{M}}_{1,n}(Q)$, which we may think of coming from an enrichment of the notion of level used to define Smyth's $m$-stable spaces. Finally, we construct a cube complex of Artin stacks interpolating between the $\ol{\mathcal{M}}_{1,n}(Q)$'s, a multidimensional analogue of the wall-and-chamber structure seen in the log minimal model program for $\ol{\mathcal{M}}_g$.
\end{abstract}

\maketitle

\section{Introduction}

The moduli stack $\mathcal{M}_{g,n}$ of smooth genus $g$ algebraic curves with $n$ marked points is not proper, so one searches for compactifications, that is, proper Deligne-Mumford stacks $\mathcal{M}$ so that $\mathcal{M}_{g,n}$ embeds as a dense open substack of $\mathcal{M}$.
In this paper we construct a new family of modular compactifications of $\mathcal{M}_{1,n}$. We then show that these moduli spaces exhaust the semistable modular compactifications of $\mathcal{M}_{1,n}$ with Gorenstein singularities and distinct markings.

Let us now set up some notation in order to give the definition of these new moduli spaces.

\begin{definition}
Given a positive integer $n$, let $\PPart(n)$ be the set of partitions of $\{1, \ldots, n \}$. Give $\PPart(n)$ a partial order by $P_1 \preceq P_2$ if the partition $P_1$ is refined by the partition $P_2$.

Denote by $\mathfrak{Q}_n$ the collection of subsets $Q \subseteq \PPart(n)$ such that
\begin{enumerate}
    \item $Q$ is downward closed;
    \item $Q$ does not contain the discrete partition $\{ \{1\}, \ldots, \{n\} \}$. 
\end{enumerate}
\end{definition}

\begin{definition}
Let $p$ be a closed point of an algebraic curve $C$ over an algebraically closed field $k$, and let $\nu : \tilde{C} \to C$ be the normalization.
The \emph{number of branches at $p$} is
\[
  m(p) = |\nu^{-1}(p)|.
\]
The \emph{delta invariant of $p$} is
\[
  \delta(p) = \dim_{k}(\nu_*\mathscr{O}_{\tilde{C}}/\mathscr{O}_C).
\]
The \emph{genus of $p$} is
\[
  g(p) = \delta(p) - m(p) - 1.
\]
\end{definition}

The genus of a singularity captures its contribution to the genus of $C$. In particular, if $C$ is connected and proper, $p_1, \ldots, p_e$ are the singularities of $C$, and $\tilde{C}_1,\ldots, \tilde{C}_v$ are the irreducible components of $\tilde{C}$, then the arithmetic genus of $C$ is
\[
  g(C) = \sum_{i = 1}^e g(p_i) + \sum_{j = 1}^v g(\tilde{C}_j) + b_1(\Delta_C),
\]
where $b_1(\Delta_C)$ is the first Betti number of the simplicial complex $\Delta_C$ with vertices $\tilde{C}_1,\ldots, \tilde{C}_v$ and, for each $p_i$, an $(m(p_i) - 1)$-simplex whose vertices are glued to the components meeting $\nu^{-1}(p_i)$.

\begin{definition}
A closed point $p$ of an algebraic curve $C$ over an algebraically closed field is an \emph{elliptic Gorenstein singularity} if $\mathscr{O}_{C,p}$ is Gorenstein and $g(p) = 1.$
\end{definition}

It is shown in \cite{smyth_mstable} that the elliptic Gorenstein singularities are classified by their number of branches, $m$. If
$m = 1$, $p$ is a cusp; for $m = 2$, $p$ is a tacnode; for $m \geq 3$,
$p$ is the union of the coordinate axes of $\mathbb{A}^{m-1}$ with one more line transverse to each of the coordinate hyperplanes of $\mathbb{A}^{m-1}$. Given such a singularity, we will call the irreducible components to which $p$ belongs the \emph{branches} of $p$.

\begin{definition}
A \emph{subcurve} $Z$ of a proper algebraic curve $C$ over an algebraically closed field is a connected reduced closed subscheme of $C$.

Let $(C, p_1, \ldots, p_n)$ be a curve of arithmetic genus one together with $n$ marked closed points over an algebraically closed field. Let $Z$ be a subcurve of $C$ of genus one and let $\Sigma$ be the divisor of markings. We define the \emph{level of $Z$}, $\lev(Z)$, to be the partition of $\{1, \ldots, n \}$
where $a, b \in \{1, \ldots, n \}$ lie in the same subset if and only if the markings $p_a, p_b$ lie in the same connected component of $(C - Z) \cup \Sigma$.

If $q \in C$ is an elliptic Gorenstein singularity, we say the \emph{level of $q$}, $\lev(q)$ is the partition of $\{ 1, \ldots, n \}$ where $a, b \in \{1, \ldots, n \}$ lie in the same subset if and only if the markings $p_a, p_b$ lie in the same connected component of the normalization of $C$ at $q$. (i.e. if the rational tails
containing $p_a, p_b$ are connected via a nodal path to the same branch of the singularity).
\end{definition}

\begin{remark}
If $Z_1$ and $Z_2$ are two genus one subcurves of $C$ and $Z_1 \subseteq Z_2$, then $\lev(Z_1) \preceq \lev(Z_2)$.
\end{remark}

\begin{remark}
The level of $C$ considered in \cite{smyth_mstable} is the cardinality $|\lev(Z)|$, where $Z$ is the minimal subcurve of $C$ of genus one.

The level of an elliptic Gorenstein singularity $q$ defined here was called the ``combinatorial type of $C$" in \cite[Definition 2.15]{smyth_mstable2}.
\end{remark}

\begin{definition}
Let $Q \in \mathfrak{Q}_n$. A $Q$-stable curve over a scheme $S$ consists of
\begin{enumerate}
  \item $\pi : C \to S$ a flat and proper morphism of schemes;
  \item $\sigma_1, \ldots, \sigma_n: S \to C$ sections of $\pi$ with disjoint images
\end{enumerate}
such that for each geometric fiber $(C_s, p_1, \ldots, p_n)$, 
\begin{enumerate}
  \item $C_s$ is a connected reduced Gorenstein scheme of dimension 1 with arithmetic genus one;
  \item (level condition on subcurves) If $Z \subseteq C_s$ is a subcurve of genus one, then $\lev(Z) \not\in Q$;
  \item (level condition on singularities) If $q \in Z$ is a genus one singularity, then $\lev(q) \in Q$;
  \item $H^0(C, \Omega^\vee_C(-\Sigma)) = 0$.
\end{enumerate}

We define the \emph{moduli space $\ol{\mathcal{M}}_{1,n}(Q)$ of $Q$-stable $n$-marked curves of genus one} to be the stack over $\Z[1/6]$ whose $S$-points are the $Q$-stable curves over $S$.
\end{definition}

Our first main result is that this defines a modular compactification of $\mathcal{M}_{1,n}$.

\begin{theorem}[= Theorem \ref{thm:construction}]
For each $Q \in \mathfrak{Q}_n$, $\ol{\mathcal{M}}_{1,n}(Q)$ is a proper irreducible Deligne-Mumford stack over $\Z[1/6]$ containing $\mathcal{M}_{1,n}$.
\end{theorem}

When $Q = \{ S \in \PPart(n) : |S| \leq m \}$ for some $m$, we recover Smyth's $m$-stable compactification $\ol{\mathcal{M}}_{1,n}(m)$\cite{smyth_mstable}. We may regard the spaces $\ol{\mathcal{M}}_{1,n}(Q)$ as ``combinatorial remixes''
of the $m$-stable spaces, since each of the curves of $\ol{\mathcal{M}}_{1,n}(Q)$ for some $Q$ belong to some $\ol{\mathcal{M}}_{1,n}(m)$ for various $m$. Despite this, the $Q$-stable spaces are surprisingly plentiful: for $n = 5$, there are only $5$
$m$-stable spaces, but $79,814,831$ $Q$-stable spaces.

All of the $Q$-stable spaces arise from compatible choices of how to contract the universal curve of the moduli space of radially aligned log curves (defined in \cite{rsw} and \cite{keli_thesis}), analogously to the ``extremal assignments'' of \cite{smyth_zstable}. It was systematic enumeration of
such contractions using the log-geometric techniques of \cite{bozlee_thesis} that led to the discovery of the $Q$-stable spaces.
This leads to a resolution of the rational map between the Deligne-Mumford-Knudsen space and each $Q$-stable
space.

\begin{theorem}[= Theorem \ref{thm:contraction_to_Q}] \label{thm:contraction_to_Q_intro}
For each $Q \in \mathfrak{Q}_n$, there is a diagram of stacks
\[
  \xymatrix{
     & \ol{\mathcal{M}}_{1,n}^{rad} \ar[dl] \ar[dr] & \\
    \ol{\mathcal{M}}_{1,n}  & & \ol{\mathcal{M}}_{1,n}(Q)
  }
\]
so that both arrows are proper and restrict to an isomorphism on $\mathcal{M}_{1,n}$.
\end{theorem}

We will also find the construction of contractions of families of curves to be helpful sporadically throughout the paper.

Our next main theorem is that the $Q$-stable spaces account for all sufficiently nice modular compactifications in genus one, taking us one step further in the classification of modular compactifications of the moduli space of pointed algebraic curves. To that end, we introduce some definitions.

\begin{definition}
Let $\mathcal{U}_{1,n}$ be the stack of Gorenstein, connected, reduced curves of genus one with $n$ distinct smooth marked points and no infinitesimal automorphisms. For the purposes of this paper, a \emph{modular compactification} is an open Deligne-Mumford substack $\mathcal{M}$ of $\mathcal{U}_{1,n}$, proper over $\Spec \mathbb{Z}[\frac{1}{6}]$.
\end{definition}

In the language of \cite{smyth_zstable}, a modular compactification in our sense is a semistable modular compactification whose curves are Gorenstein with distinct smooth markings, except that the base is chosen as $\Spec \mathbb{Z}[\frac{1}{6}]$ instead of $\Spec \mathbb{Z}$.

\begin{theorem} \label{thm:classification}
If $\mathcal{M}$ is a modular compactification of $\mathcal{M}_{1,n}$, then $\mathcal{M} \cong \ol{\mathcal{M}}_{1,n}(Q)$ for some $Q$.
\end{theorem}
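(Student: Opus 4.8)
The plan is to show that any modular compactification $\mathcal{M} \subseteq \mathcal{U}_{1,n}$ is determined by the set $Q$ of partitions that are \emph{forbidden} as levels of genus one subcurves of its curves, and that this $Q$ lies in $\mathfrak{Q}_n$. So first I would define, for a given $\mathcal{M}$, the set
\[
Q(\mathcal{M}) = \{\, P \in \PPart(n) : \text{there is no curve in } \mathcal{M} \text{ with a genus one subcurve } Z \text{ such that } \lev(Z) = P \,\}.
\]
The condition that $\mathcal{M}$ contains $\mathcal{M}_{1,n}$ forces the smooth genus one curve (whose unique genus one subcurve is all of $C$, with level the indiscrete partition) to be in $\mathcal{M}$, so the indiscrete partition is not in $Q(\mathcal{M})$; combined with downward closure this will show $Q(\mathcal{M}) \neq \PPart(n)$, but I still need condition (ii), that the discrete partition is \emph{in} $Q(\mathcal{M})$. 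That should follow from properness: a curve with a genus one subcurve of discrete level would be something like an arbitrarily-long chain/elliptic component with all markings on separate rational tails, and the relevant degeneration/openness argument (or the valuative criterion applied to a well-chosen family) produces a curve that must lie in $\mathcal{M}$ but cannot, unless we contract—forcing the discrete level to be excluded. Downward closure of $Q(\mathcal{M})$ should come from the remark that $Z_1 \subseteq Z_2 \Rightarrow \lev(Z_1) \preceq \lev(Z_2)$ together with a contraction construction: if some $\mathcal{M}$-curve realizes level $P$, one can degenerate/modify it to realize any coarsening of $P$ while staying in $\mathcal{M}$ (using openness of $\mathcal{M}$ in $\mathcal{U}_{1,n}$ and the boundedness forced by properness).

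The core of the argument is then to prove $\mathcal{M} = \ol{\mathcal{M}}_{1,n}(Q(\mathcal{M}))$ as substacks of $\mathcal{U}_{1,n}$. The inclusion $\mathcal{M} \subseteq \ol{\mathcal{M}}_{1,n}(Q(\mathcal{M}))$ requires checking the four fiberwise conditions in the definition of $Q$-stability for every curve in $\mathcal{M}$: Gorenstein/genus one/reducedness and the no-infinitesimal-automorphisms condition $H^0(C,\Omega_C^\vee(-\Sigma)) = 0$ are built into $\mathcal{U}_{1,n}$; the subcurve level condition is exactly the definition of $Q(\mathcal{M})$; and the singularity level condition $\lev(q) \in Q(\mathcal{M})$ for each elliptic Gorenstein singularity $q$ must be extracted—here I would argue that if $\lev(q) = P \notin Q(\mathcal{M})$, then some curve in $\mathcal{M}$ has a genus one subcurve of level $P$, contradicting $P \notin Q(\mathcal{M})$; concretely, one takes the curve with the singularity $q$ and performs a partial normalization / one-parameter smoothing of $q$ that, over the generic point, replaces the elliptic Gorenstein singularity by a genus one subcurve of the same level $P$, and invokes openness of $\mathcal{M}$ to conclude that curve is in $\mathcal{M}$ too. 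This uses that the elliptic Gorenstein singularities are exactly the ones classified by their branch number (the cited result of \cite{smyth_mstable}), so the bookkeeping of levels under these surgeries is combinatorially controlled.

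For the reverse inclusion $\ol{\mathcal{M}}_{1,n}(Q(\mathcal{M})) \subseteq \mathcal{M}$, I would use that both sides are open substacks of $\mathcal{U}_{1,n}$ that are proper over $\Spec \Z[1/6]$: an open, proper substack of an irreducible stack containing a common dense open (here $\mathcal{M}_{1,n}$) is determined by its geometric points, and properness (valuative criterion) plus the structure of $\ol{\mathcal{M}}_{1,n}(Q)$-curves (Theorem \ref{thm:properness} of the excerpt, the first main theorem) will let me "fill in" any $Q$-stable curve as a limit of curves in $\mathcal{M}_{1,n} \subseteq \mathcal{M}$, hence in $\mathcal{M}$ by properness of $\mathcal{M}$. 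The point is that a $Q(\mathcal{M})$-stable curve $C_0$ arises as the special fiber of a family over a DVR with smooth generic fiber; by properness of $\mathcal{M}$ this family extends to a family in $\mathcal{M}$ with \emph{some} special fiber $C_0'$, and by the stable-reduction/uniqueness statement underlying Theorem \ref{thm:properness} (the $Q$-stable limit is unique), $C_0' \cong C_0$ provided $C_0'$ is also $Q(\mathcal{M})$-stable—which it is, since $C_0' \in \mathcal{M}$ forces $\lev(Z) \notin Q(\mathcal{M})$ for its subcurves and $\lev(q) \in Q(\mathcal{M})$ for its singularities by definition of $Q(\mathcal{M})$ and the previous paragraph.

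The main obstacle I anticipate is the surgery/degeneration arguments connecting subcurve levels to singularity levels and proving downward closure: one needs explicit one-parameter families realizing, within $\mathcal{M}$, all the coarsenings of a given level and the "opening up" of an elliptic Gorenstein singularity into a genus one subcurve of the same level, and one must be careful that these families stay inside $\mathcal{U}_{1,n}$ (in particular that the condition $H^0(C,\Omega_C^\vee(-\Sigma)) = 0$ and the no-worse-than-elliptic-Gorenstein singularity condition are preserved). This is where the contraction-of-families technology advertised after the statement of the first main theorem (via radially aligned log curves, following \cite{rsw}, \cite{bozlee_thesis}) will be doing the real work; the rest of the proof is then a clean openness-plus-properness comparison of two substacks of $\mathcal{U}_{1,n}$ that agree on geometric points.
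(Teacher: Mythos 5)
Your overall strategy---define a set $Q(\mathcal{M})$ of "forbidden" subcurve levels and show $\mathcal{M} = \ol{\mathcal{M}}_{1,n}(Q(\mathcal{M}))$---is close in spirit to the paper's, and the last paragraph (reverse inclusion via properness of $\mathcal{M}$ plus separatedness of $\ol{\mathcal{M}}_{1,n}(Q)$) is essentially correct once the forward inclusion is known. But there are real logical gaps in the forward half, and the two central pieces of machinery the paper builds are absent.

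First, the checks that $Q(\mathcal{M}) \in \mathfrak{Q}_n$ contain direction errors. Condition (ii) of $\mathfrak{Q}_n$ says the discrete partition is \emph{not} in $Q$; you assert you need to show the discrete partition \emph{is} in $Q(\mathcal{M})$ and then gesture at a degeneration argument for it. In fact no argument is needed: the whole curve $C$ is itself a connected genus-one subcurve with $\lev(C)$ the discrete partition, so the discrete partition is automatically realized and hence not in $Q(\mathcal{M})$. Similarly, your downward-closure sketch goes the wrong way: $Q(\mathcal{M})$ downward closed is equivalent to the realized levels $\PPart(n) \setminus Q(\mathcal{M})$ being closed under \emph{refinement}, but you propose to realize \emph{coarsenings} of a realized level. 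And in the step extracting the singularity level condition, the phrase "some curve in $\mathcal{M}$ has a genus one subcurve of level $P$, contradicting $P \notin Q(\mathcal{M})$" is not a contradiction; that statement is precisely what $P \notin Q(\mathcal{M})$ means. The contradiction you actually need must come from separatedness: the curve with singularity $q$ and a curve with a subcurve of level $\lev(q)$ must be exhibited as \emph{two distinct limits of a single generic family}, and $\mathcal{M}$ being separated then rules out having both. Your proposal never constructs such a common one-parameter family.

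Second, and more fundamentally, two structural lemmas are missing. The paper first proves (its Lemma on unions of loci) that $\mathcal{M}$ is necessarily a union of the combinatorial-type strata $\mathcal{Z}_\Gamma$; this uses irreducibility of each $\mathcal{Z}_\Gamma$, the covering $\mathcal{U}_{1,n} = \bigcup_m \ol{\mathcal{M}}_{1,n}(m)$, and separatedness of $\ol{\mathcal{M}}_{1,n}(m)$. Without it, knowing one curve with a given singularity is in $\mathcal{M}$ does not let you move to a convenient representative, which your surgery arguments implicitly require. The paper then introduces \emph{test curves} of a given partition type $\mathcal{P}: P_1 \prec \cdots \prec P_k$ and proves that $\mathcal{M}$ contains exactly one of the $k+1$ possible contractions of the universal family (by comparing balanced exceptional loci in a common semistable model), and that which one it picks is determined coherently by the set of length-one types that $\mathcal{M}$ contracts. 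This "exactly one contraction, coherently chosen" statement is where both the downward-closure of $Q(\mathcal{M})$ and the singularity level condition actually come from; your proposal does not contain an analogue of it, and the vague "surgery/degeneration" step you flag as the main obstacle is exactly where these lemmas would have to be supplied.
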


We prove this classification theorem over the course of Section \ref{sec:classification}.

Finally, in Section \ref{sec:interpolation}, we construct a cube complex of mildly non-separated Artin stacks interpolating between the $\ol{\mathcal{M}}_{1,n}(Q)$'s. This complex yields a multidimensional analogue of the wall-and-chamber structure seen in the log minimal model program for $\ol{\mathcal{M}}_g$.

This paper gives the first general classification of Gorenstein modular compactifications of $\mathcal{M}_{g,n}$ in genus greater than 0. In future work we hope to use similar ideas to construct and classify modular compactifications of $\mathcal{M}_{g,n}$. For instance, Battistella \cite{luca} has constructed a sequence of modular compactifications of $\mathcal{M}_{2,n}$ parametrized by a level analogous to that of \cite{smyth_mstable}, and our more flexible notion of level should also yield combinatorial variations of Battistella's moduli spaces.

It would also be natural to search for similar results on modular compactifications in which the marked points are permitted to come together, as in
the spaces of weighted stable curves of \cite{hassett_weighted_curves}. The thesis of Andy Fry\cite{fry_thesis} suggests that it is necessary to consider more general collisions of markings than those permitted by weights. This will be pursued in future work with Vance Blankers.

\subsection{Acknowledgements}

We would like to thank David Smyth for suggesting that we look for ``universal mesas" which turned into the universal radii seen here, Jonathan Wise, who jointly supervised the summer research project that led to these results, Sam Scheeres and Toby Aldape who also participated in the project, and Connor Meredith and Mathieu Foucher, whose software for enumerating the strata of $\ol{M}_{g,n}^{trop}$ was very useful for exploring the space of universal mesas, Dhruv Ranganathan for his comments on an early version of this paper, and the anonymous referee whose comments improved its presentation. The first author
would also like to thank Leo Herr for some helpful conversations.

\section{Examples}

In this section we give some examples to illustrate the nature and variety of $Q$-stable spaces.
We start by describing how to count $Q$-stability conditions.

\begin{definition}
An \emph{antichain} in a partially ordered set $P$ is a subset $A \subseteq P$ so that no distinct elements of $A$ are comparable.
\end{definition}

\begin{proposition}
Let $P$ be a partially ordered set. There is a bijection
\[
  \left\{ Q \subsetneq P : Q \text{ downward closed} \right\} \longleftrightarrow \left\{ A \subseteq P : \text{$A$ is a non-empty antichain} \right\}
\]
given left-to-right by taking $Q$ to the set of minimal elements of $P - Q$, and right-to-left by taking $A$ to the complement of the upward closure of $A$.
\end{proposition}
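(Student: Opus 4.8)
The plan is to exhibit the two maps described in the statement and check that they are mutually inverse, which amounts to verifying that ``taking minimal elements'' and ``taking the complement of the upward closure'' are inverse bijections between downward-closed proper subsets of $P$ and non-empty antichains.

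\medskip

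\noindent\textbf{Step 1: The maps land where claimed.} First I would check that if $Q \subsetneq P$ is downward closed, then $A := \min(P - Q)$ is a non-empty antichain. Non-emptiness is immediate since $P - Q \neq \emptyset$ and... well, here one must be slightly careful: every element of $P - Q$ must dominate a minimal element of $P - Q$, which requires a small argument unless $P$ is assumed to satisfy a descending chain condition. I expect the paper either restricts to finite $P$ (which suffices for the application to $\PPart(n)$) or phrases things so that $\min(P-Q)$ is interpreted appropriately; I will assume $P$ is finite, so every non-empty subset has a minimal element. That $A$ is an antichain is clear: two distinct minimal elements of a subset cannot be comparable. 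Conversely, if $A$ is a non-empty antichain, let $U$ be the upward closure of $A$ and set $Q := P - U$. Then $Q$ is downward closed (the complement of an upward-closed set), and $Q \neq P$ since $A \subseteq U$ is non-empty, so $Q$ is a proper subset.

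\medskip

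\noindent\textbf{Step 2: One composite is the identity.} Starting from a non-empty antichain $A$, form $Q = P - U$ where $U$ is the upward closure of $A$, then take $\min(P - Q) = \min(U)$. I must show $\min(U) = A$. Every $a \in A$ lies in $U$, and nothing strictly below $a$ lies in $U$ (if $x < a$ and $x \geq a'$ for some $a' \in A$, then $a' \leq x < a$ forces $a' = a$ by the antichain property, contradicting $x < a$), so $a$ is minimal in $U$; hence $A \subseteq \min(U)$. Conversely any minimal element $u$ of $U$ satisfies $u \geq a$ for some $a \in A$, and since $a \in U$ and $u$ is minimal we get $u = a \in A$. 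Thus $\min(U) = A$.

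\medskip

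\noindent\textbf{Step 3: The other composite is the identity.} Starting from a downward-closed proper $Q$, let $A = \min(P - Q)$ and let $U$ be its upward closure; I must show $P - U = Q$, equivalently $U = P - Q$. Clearly $U \subseteq P - Q$: each element of $A$ lies in $P - Q$, and $P - Q$ is upward closed (being the complement of the downward-closed set $Q$), so the upward closure of $A$ stays inside $P - Q$. For the reverse inclusion, take $x \in P - Q$; by finiteness there is a minimal element $a$ of $P - Q$ with $a \leq x$, so $a \in A$ and hence $x \in U$. Therefore $U = P - Q$ and the composite is the identity.

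\medskip

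\noindent The main obstacle, such as it is, is purely a matter of hypotheses rather than difficulty: the existence of minimal elements below every point of $P - Q$ needs $P$ to satisfy the descending chain condition (automatic for finite $P$), and the cleanest writeup simply notes that $\PPart(n)$ is finite. Everything else is a routine unwinding of the definitions of upward and downward closure together with the antichain property.
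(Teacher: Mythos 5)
Your proof is correct and complete. The paper omits the proof of this proposition entirely, so there is no authorial argument to compare against; your argument is the natural one.

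Your observation in Step 1 about the descending chain condition is a genuine point and worth flagging: as literally stated, the proposition is false for an arbitrary poset $P$. For instance, with $P = \mathbb{Z}$ under the usual order and $Q = \emptyset$, the set $P - Q = \mathbb{Z}$ has no minimal elements, so the left-to-right map would send $Q$ to $\emptyset$, which is not a non-empty antichain. The proposition needs $P$ to satisfy the DCC (every non-empty subset has a minimal element), which is automatic for finite posets. Since the paper only invokes this for $P = \PPart(n)$, which is finite, the result is used correctly; but your instinct to state the finiteness hypothesis explicitly is sound, and a careful version of the proposition should include it.
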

\begin{proof}
Omitted.
\end{proof}

The number of non-empty antichains of the lattice of partitions of $n$ elements are counted in OEIS sequence A302251 \cite{oeis_antichains}.
We learn that there are
\begin{itemize}
  \item 9 $Q$-stable compactifications of $\mathcal{M}_{1,3}$,
  \item 346 $Q$-stable compactifications of $\mathcal{M}_{1,4}$,
  \item 79,814,831 $Q$-stable compactifications of $\mathcal{M}_{1,5}$.
\end{itemize}
By contrast, for a given $n$, there are only $n$ compactifications of $\mathcal{M}_{1,n}$ by $m$-stable spaces.

\medskip

Since the properties of being downward closed and of being a proper subset are preserved by finite union and intersection, the set $\mathfrak{Q}_n$ forms a lattice under union and intersection.

\begin{example}
Consider the case $n =3$. The Hasse diagram of $\PPart(3)$ and the corresponding lattice of $Q$-stability conditions for $n = 3$ are displayed in Figure \ref{fig:q_stable_lattice_3}. Visually, $\mathfrak{Q}_3$ consists of a cube
and a whisker: we will show later that the lattice is always a ``union of cubes" and consider a way to fill in the interior of the cube.

We see that there are 9 $Q$-stable spaces for $n = 3$, in agreement with the count just above. Three of those are $m$-stable spaces: $\ol{\mathcal{M}}_{1,3}$ corresponds to the subset at
the bottom of the diagram, $\ol{\mathcal{M}}_{1,3}(1)$ to the subset just above, and $\ol{\mathcal{M}}_{1,3}(2)$ to the subset at the top of whole diagram.

In Figure \ref{fig:stable_curves_3} we give some examples of $3$-pointed curves and the $Q$'s for which they are considered stable.

\begin{figure}[h]
\begin{center}
\includegraphics[height=3.5in]{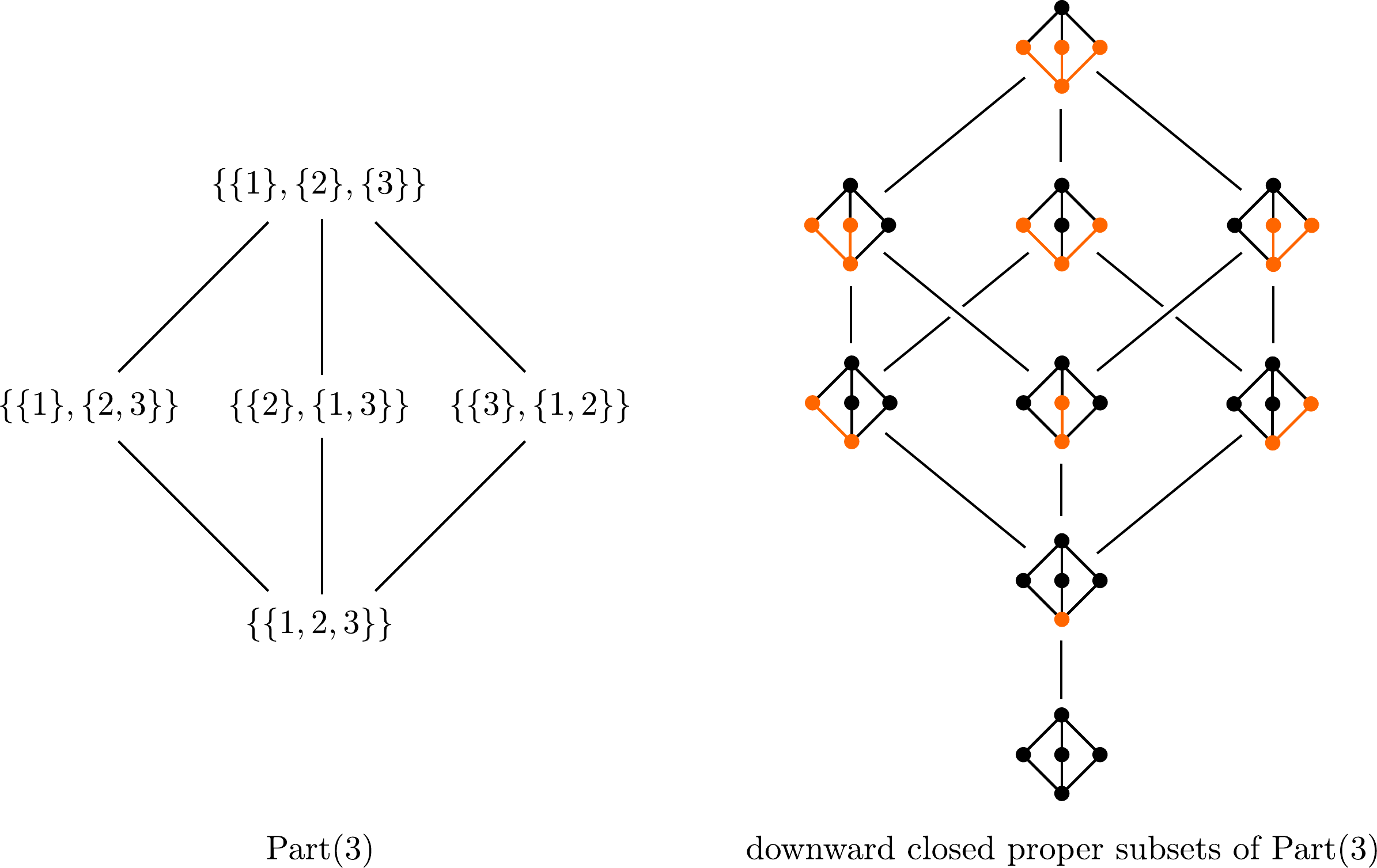}
\end{center}
\caption{The partially ordered set of partitions of $\{1,2,3\}$ and the lattice of $Q$-stability conditions for $n = 3$. An orange dot indicates that the corresponding partition on the left is included in $Q$.} \label{fig:q_stable_lattice_3}
\end{figure}

\begin{figure}[h]
\begin{center}
  \includegraphics[width=\textwidth]{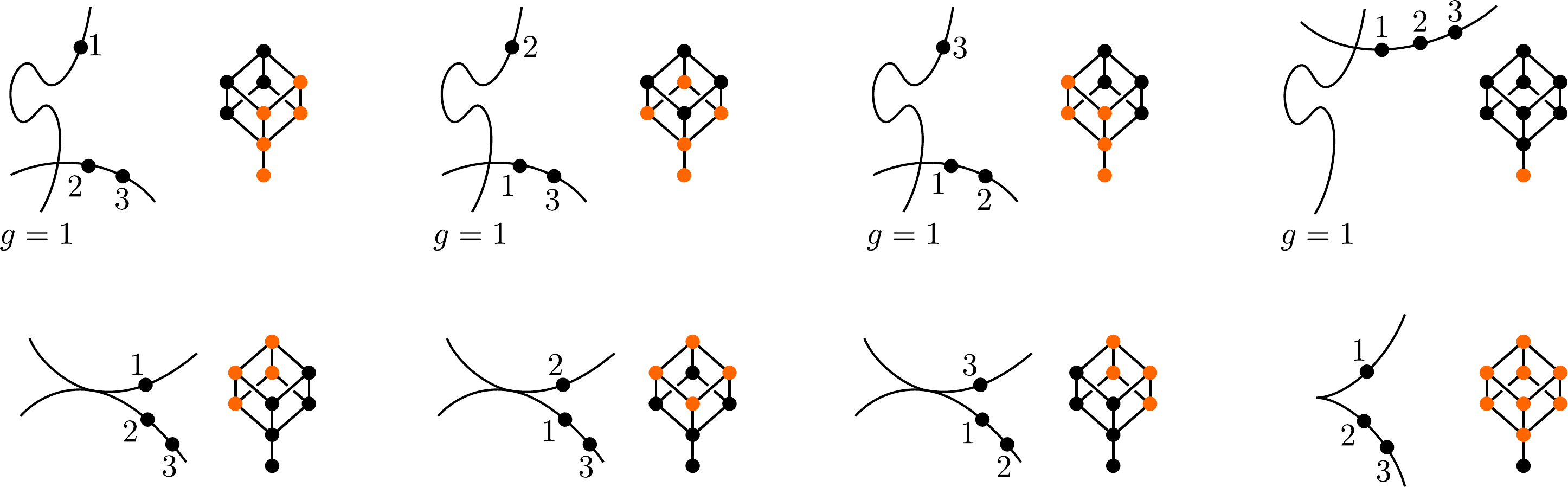}
\end{center}
\caption{Some Gorenstein $3$-pointed curves of genus one. Next to each curve we indicate for which choices of $Q$ the curve is $Q$-stable: an orange dot
means that the curve is stable with respect to the corresponding point in the diagram of downward closed subsets in Figure \ref{fig:q_stable_lattice_3}.} \label{fig:stable_curves_3}
\end{figure}

\end{example}

\begin{example}
For $n = 3$ none of the new stability conditions---that is, the $Q$'s so that $\ol{\mathcal{M}}_{1,n}(Q)$ is not an $m$-stable space---are symmetric with respect
to the markings. This is a coincidence for low $n$.

Say that a proper downward closed subset $Q$ of $\PPart(n)$ is \emph{symmetric} if $Q$ is fixed by the natural $S_n$ action. The orbits
of partitions of $\{ 1, \ldots, n \}$ are in bijection with the integer partitions of $n$, so we may equivalently think of a symmetric $Q$ as a proper downward
closed subset of the partially ordered set of integer partitions of $n$ ordered by refinement. Since the property of being symmetric is preserved under
intersection and union, the set of symmetric $Q$-stable conditions for $n$ also form a lattice under union and intersection.

Consider the set of symmetric $Q$-stability conditions when $n = 5$. The Hasse diagram of the integer partitions of 5 and the lattice of symmetric $Q$-stable conditions, colored by the corresponding subset of the integer partitions of 5, is
shown in Figure \ref{fig:symmetric_Qs_5}. The 5 $m$-stable spaces are given by the subsets down the middle of the diagram on the right; the 4 remaining subsets
on the sides of that diagram yield new moduli spaces.

\begin{figure}[h]
\begin{center}
\includegraphics[height=3in]{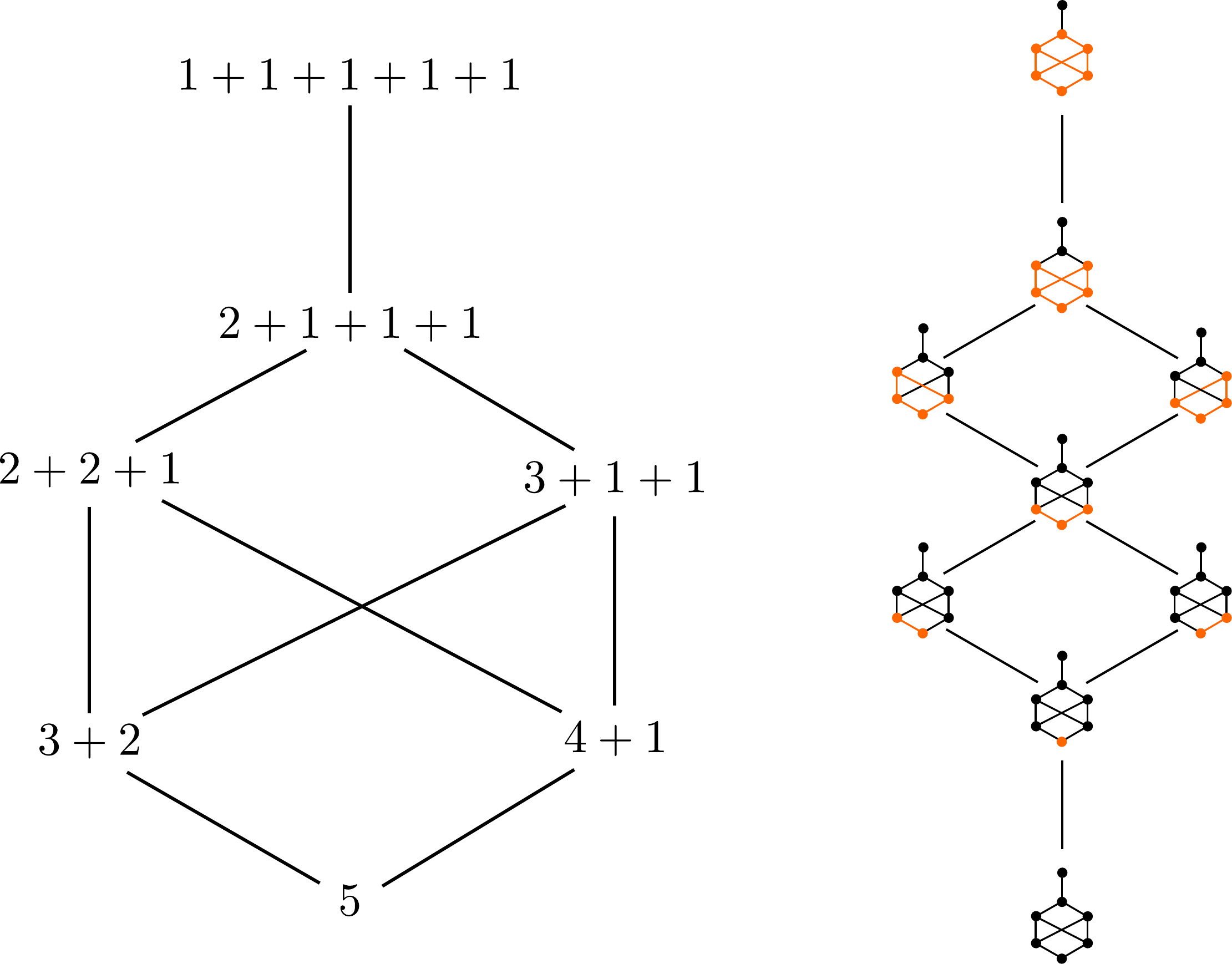}
\end{center}
\caption{On the left, the integer partitions of $5$. On the right, the lattice of symmetric $Q$-stability conditions when $n = 5$, thought of as subsets of the set of integer partitions. An orange dot indicates that the corresponding integer partition in the diagram on the left is included in $Q$.} \label{fig:symmetric_Qs_5}
\end{figure}

\end{example}

\section{Preliminaries}

\subsection{Tropical curves}

Our main tool is the log geometric approach to tropical geometry. We will use the framework of \cite{ccuw}. All of our monoids will be commutative and we take $\mathbb{N}$ to include zero. We will prefer additive notation for the operation of $P$.

Recall that a monoid $P$ is
\begin{enumerate}
  \item \emph{sharp} if its only invertible element is the identity,
  \item \emph{integral} if $a + b = a + c$ implies $b = c$ for all $a,b,c \in P$,
  \item \emph{finitely generated} if there is a surjection $\N^r \to P$ for some integer $r$,
  \item \emph{saturated} if $P$ is integral and for any $a \in P^{gp}$ and $n \in \Z_{> 0}$, $n \cdot a \in P$ implies $a \in P$.
  \item \emph{fs} if $P$ is finitely generated, integral, and saturated.
\end{enumerate}

We begin by recalling the definition of tropical curve, which is essentially a graph whose edges are labeled with ``lengths'' from an fs sharp monoid.

\begin{definition}
An $n$-marked \emph{tropical curve} $\Gamma$ \emph{with edge lengths in a fs sharp monoid} $P$ consists of:
\begin{enumerate}
  \item A finite set $X(\Gamma) = V(\Gamma) \sqcup F(\Gamma)$. The elements of $V(\Gamma)$ are called the \emph{vertices} of $\Gamma$ and the elements of $F(\Gamma)$ are called the \emph{flags} of $\Gamma$.
  \item A \emph{root map} $r_\Gamma : X(\Gamma) \to X(\Gamma)$ which is idempotent with image $V(\Gamma)$.
  \item An involution $\iota_\Gamma : X(\Gamma) \to X(\Gamma)$ that fixes $V(\Gamma)$. The subsets $\{ f, \iota_\Gamma(f) \}$ of $F(\Gamma)$
  of size two are called \emph{edges}, and the set of all edges is denoted $E(\Gamma)$. The subsets $\{ f, \iota_\Gamma(f) \}$ of $F(\Gamma)$ of size one are called \emph{legs},
  and the set of all legs is denoted $L(\Gamma)$.
  \item A bijection $l : \{ 1, \ldots, n \} \to L(\Gamma)$.
  \item A function $g : V(\Gamma) \to \N$. Given a vertex $v$, $g(v)$ is called the \emph{genus of $v$}.
  \item A function $\delta : E(\Gamma) \to P$. Given an edge $e$, $\delta(e)$ is called the \emph{length of $e$}.
\end{enumerate}
\end{definition}

We imagine that each flag $f$ is half of an edge starting at the vertex $r_\Gamma(f)$. Given an edge $e = \{ f, \iota_\Gamma(f) \}$, we say the vertices $r_\Gamma(f)$ and $r_\Gamma(\iota_\Gamma(f))$ are \emph{incident} to $e$.

\begin{definition}
The \emph{genus} of a tropical curve $\Gamma$ is
\[
  g(\Gamma) = b_1(\Gamma) + \sum_{v \in V(\Gamma)} g(v),
\]
where $b_1(\Gamma)$ is the first Betti number of $\Gamma$, that is, $|E(\Gamma)| - |V(\Gamma)| + n$ where $n$ is the number of connected components of $\Gamma$.
\end{definition}

\begin{definition}
A tropical curve is \emph{stable} if it is connected, not an isolated vertex of genus one, and the valence of each of its vertices of genus 0 is at least 3.
\end{definition}

\begin{definition}
A \emph{piecewise linear function} $f$ on a tropical curve $\Gamma$ with edge lengths in $P$ consists of
\begin{enumerate}
  \item a value $f(v) \in P$ for each vertex $v \in V(\Gamma)$;
  \item a slope $m(l) \in \mathbb{N}$ for each leg $l \in L(\Gamma)$
\end{enumerate}
such that whenever $e$ is an edge with ends $v$ and $w$, $f(v) - f(w)$ is an integer multiple of $\delta(e)$.

The set of all piecewise linear functions on $\Gamma$ is denoted $\PL(\Gamma)$.
\end{definition}

Given a tropical curve $\Gamma$ with edge lengths in $P$ and a morphism of fs sharp monoids $\pi^\sharp : P \to P'$, we may apply $\pi^\sharp$ to the edge lengths of $\Gamma$ and contract edges of length zero
to arrive at a new tropical curve. Composing with an isomorphism gives us the notion of a weighted edge contraction, which we define below.

\begin{definition}
Let $\Gamma, \Gamma'$ be tropical curves with edge lengths in $P$ and $P'$, respectively. A \emph{weighted edge contraction} $\pi: \Gamma' \to \Gamma$ (note the variance!) consists of
\begin{enumerate}
  \item a function $\pi : X(\Gamma) \to X(\Gamma')$
  \item a morphism of monoids $\pi^\sharp : P \to P'$
\end{enumerate}
such that
\begin{enumerate}
  \item $\pi$ preserves ends of flags: $\pi \circ r_\Gamma = r_{\Gamma'} \circ \pi$;
  \item $\pi$ preserves edges: $\pi \circ \iota_\Gamma = \iota_{\Gamma'} \circ \pi$;
  \item $\pi$ sends legs of $\Gamma$ bijectively to legs of $\Gamma'$ and preserves their markings;
  \item for each flag $f \in F(\Gamma')$, the preimage $\pi^{-1}(f)$ has exactly one element (automatically a flag);
  \item for each vertex $v \in V(\Gamma')$, the preimage $\pi^{-1}(v)$ is a connected weighted graph of genus $g(v)$;
  \item the flags of an edge $e \in E(\Gamma)$ are sent by $\pi$ to a vertex of $\Gamma'$ if and only if $\pi^\sharp(\delta(e)) = 0$;
  \item for each edge $e \in E(\Gamma)$ with $\pi^{\sharp}(\delta(e)) \neq 0$, the image of $e$ is an edge $e'$ of length $\delta(e') = \pi^\sharp(\delta(e))$.
\end{enumerate}

We will call a weighted edge contraction a \emph{face contraction} if there is a subset $S \subseteq P$ so that the map $\pi^\sharp$ is of the form
\[
  P \longrightarrow S^{-1}P \longrightarrow S^{-1}P / (S^{-1}P)^* \overset{\sim}{\longrightarrow} P'
\]
where the first arrow is localization, the second is the quotient by the submonoid of invertible elements, and the third is an isomorphism.
(These are the edge contractions associated to face inclusions in the category of rational polyhedral cones\cite[Definition 2.25]{ccuw}.) In the case that $P$ is a finite free monoid $\N^r$, the face contractions are precisely the projections of $\N^r$ onto subsets of its coordinates.

Given a weighted edge contraction $\pi : \Gamma' \to \Gamma$ there is an induced map
\[
  \pi^*: \PL(\Gamma) \to \PL(\Gamma')
\]
given by taking $f$ with values $f(v)$ and slopes $m(l)$ to the piecewise linear function $\pi^*f$ with values $(\pi^*f)(v) = \pi^\sharp(f(v))$ for $v \in V(\Gamma)$ and the same slopes.

\end{definition}

We take weighted edge contractions to be the morphisms in the category of tropical curves. In particular, an isomorphism of tropical curves is an 
invertible weighted edge contraction.

\subsection{Log curves and their tropicalizations}

The natural notion of family of curves in logarithmic geometry admits both an underlying family of pointed nodal curves and a tropicalization, connecting
the tropical and algebro-geometric worlds.
F. Kato introduced the notion of a family of log curves in \cite{fkato_deformations}.

\begin{definition} (cf. \cite[Definition 1.2]{fkato_deformations}) Let $S$ be an fs log scheme. A \emph{log curve} over $S$ is a log smooth and integral morphism $\pi : C \to S$ of fs log schemes such that every geometric fiber of $\pi$ is
a reduced and connected curve.
\end{definition}

\begin{figure} 
\centering
  \includegraphics{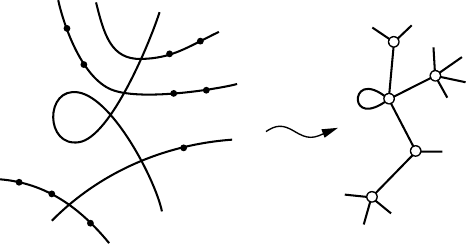}
  \caption{A typical log curve and its tropicalization.}
  \label{fig:dual_graph_example}
\end{figure}

F. Kato has shown that the underlying morphism of schemes of a log curve is a family of nodal curves, and the data in the log structure records some marked points \cite[Theorem 1.3]{fkato_deformations}. We borrow this statement of F. Kato's local structure theorem from \cite{rsw}.

\begin{theorem} \label{thm:log_curve_local_structure}
Let $\pi : C \to S$ be a family of proper log curves. If $x \in C$ is a geometric point with image $s \in S$, then there are \'etale neighborhoods $V$ of $x$ and $U$ of $s$ so that $V \to U$ has a strict morphism to an \'etale-local model
$V' \to U'$ where $V' \to U'$ is one of the following:
\begin{enumerate}
  \item (the smooth germ) $V' = \mathbb{A}^1_{U'} \to U'$ and the log structure on $V'$ is pulled back from the base;
  \item (the germ of a marked point) $V' = \mathbb{A}^1_{U'} \to U'$ with the log structure pulled back from the toric log structure on $\mathbb{A}^1$;
  \item (the node) $V' = \underline{\Spec} \mathscr{O}_{U'}[x,y] / (xy - t)$ for $t \in \mathscr{O}_{U'}$. The log structure on $V$ is pulled back from the multiplication map $\mathbb{A}^2 \to \mathbb{A}^1$ of toric varieties along the morphism
  $U' \to \mathbb{A}^1$ of logarithmic schemes induced by $t$.
\end{enumerate}
\end{theorem}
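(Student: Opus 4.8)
The plan is to deduce the statement from K.\ Kato's structure theorem for log smooth morphisms together with a classification of the fine saturated monoids that can occur, following F.\ Kato's original treatment \cite{fkato_deformations}. Working étale-locally on $S$ near $s$, set $P := \ol{M}_{S,\bar s}$, a fine saturated sharp monoid, which may be taken as a chart for $M_S$. Since $\pi$ is log smooth, K.\ Kato's structure theorem provides, after a further étale localization on $C$ near $x$, a homomorphism $\theta : P \to Q$ of fine monoids together with a strict étale $S$-morphism $C \to S \times_{\mathbb{A}_P} \mathbb{A}_Q$, with $\theta^{\mathrm{gp}}$ injective and the torsion subgroup of $\mathrm{coker}(\theta^{\mathrm{gp}})$ of order invertible in $k(s)$; as $\pi$ is integral we may take $\theta$ integral, and as the log schemes are fs we may take $Q$ saturated. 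Integrality makes $\mathbb{A}_Q \to \mathbb{A}_P$ flat with all fibers of dimension $\mathrm{rank}(Q^{\mathrm{gp}}) - \mathrm{rank}(P^{\mathrm{gp}})$, and since étale morphisms preserve dimension and every geometric fiber of $\pi$ is a curve, this difference equals $1$. Moreover the closed geometric fiber of $\mathbb{A}_Q \to \mathbb{A}_P$ through $x$ is $\Spec k(\bar s)[Q]/(\theta(P^+))$, with $P^+ := P \setminus \{0\}$, a localization of which is a local ring of a geometric fiber of $\pi$ and is therefore reduced; this reducedness forces $\mathrm{coker}(\theta^{\mathrm{gp}})$ to be torsion-free, so $Q^{\mathrm{gp}} \cong P^{\mathrm{gp}} \oplus \mathbb{Z}$.

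It then remains to classify integral injections $\theta : P \to Q$ of fine saturated sharp monoids with $Q^{\mathrm{gp}} \cong P^{\mathrm{gp}} \oplus \mathbb{Z}$ for which $\Spec k[Q]/(\theta(P^+))$ is reduced. Writing $\bar\lambda : Q^{\mathrm{gp}} \twoheadrightarrow \mathbb{Z}$ for the projection with kernel $P^{\mathrm{gp}}$ and examining the cone $\Hom(Q, \R_{\geq 0})$ sitting over $\Hom(P, \R_{\geq 0})$, saturation of $Q$ together with reducedness of the special fiber forces the ``vertical'' edges of this cone---those on which $\bar\lambda$ does not vanish---to be lattice-primitive and at most two in number; a short case analysis then shows that $Q$ is isomorphic either to $P \oplus \N$, or to the pushout $P \oplus_{\N} \N^2$ of the diagonal $\N \hookrightarrow \N^2$ along a homomorphism $\N \to P$, $1 \mapsto t$, with $t \in P$ possibly $0$. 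Passing to the localization of $Q$ at the face determined by $x$---which is again one of $P$, $P \oplus \N$, or $P \oplus_{\N} \N^2$, since localizing $S$ arranged that no face meets $\theta(P^+)$---we read off the three étale-local models: pulled-back log structure on $\mathbb{A}^1_S$ (case (1)), the toric log structure on $\mathbb{A}^1_S$ (case (2)), and, using $\mathbb{Z}[P \oplus_{\N} \N^2] \cong \mathbb{Z}[P][x,y]/(xy - t)$, the scheme $\underline{\Spec} \mathscr{O}_S[x,y]/(xy - \bar t)$ with $\bar t \in \mathscr{O}_S$ the image of $t$ and log structure pulled back along $\bar t$ from the multiplication $\mathbb{A}^2 \to \mathbb{A}^1$ (case (3)). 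Composing the strict étale $C \to S \times_{\mathbb{A}_P} \mathbb{A}_Q$ with these identifications gives the required strict morphism to a local model.

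I expect the monoid classification in the second paragraph to be the main obstacle: deriving lattice-primitivity and the bound of two vertical edges from reducedness and saturation takes genuine care, and one must also verify that the finite torsion permitted a priori by K.\ Kato's theorem is indeed killed by the reducedness hypothesis (reducedness excludes a non-primitive vertical edge because its special fiber would acquire a nonreduced factor, and saturation excludes cuspidal $Q$). The remaining ingredients---the étale-local reduction, the flatness and dimension count, and the translation into the three models---are formal.
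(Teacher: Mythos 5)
The paper does not actually prove this statement: it is quoted verbatim from Ranganathan--Santos-Parker--Wise and attributed to F.~Kato's \cite[Theorem 1.3]{fkato_deformations}, so there is no in-paper argument to measure yours against. What you have written is, in outline, a reconstruction of F.~Kato's original proof, and the skeleton is the right one: K.~Kato's chart criterion for log smooth morphisms produces an integral injection $\theta : P \to Q$ of fs monoids with $C \to S \times_{\mathbb{A}_P} \mathbb{A}_Q$ strict \'etale; flatness of $\Z[P] \to \Z[Q]$ together with one-dimensionality of the fibers pins down $\mathrm{rank}(Q^{\mathrm{gp}}) - \mathrm{rank}(P^{\mathrm{gp}}) = 1$; and reducedness of $\Spec k[Q]/(\theta(P^+))$ is indeed the hypothesis that kills both the torsion permitted a priori by K.~Kato's theorem and non-primitive generators. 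Your remark that saturation is genuinely needed is also correct: in the merely fine case additional local models occur, which is why the fs hypothesis is in the statement.

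The gap is exactly where you say it is, but it is worth being precise about why it is a gap. The claim that the cone of $Q$ has at most two ``vertical'' extremal rays \emph{is} the theorem --- it is the assertion that the fibers are at worst nodal --- and it does not follow from reducedness plus saturation in the way your sentence suggests: nothing about reducedness forbids three reduced branches through $x$. What rules this out is the combination of sharpness of $Q$, the rank identity $\mathrm{rank}(Q^{\mathrm{gp}}) = \mathrm{rank}(P^{\mathrm{gp}}) + 1$, and exactness of $\theta$ (available because a local integral homomorphism is exact): the components of $\Spec k[Q]/(\theta(P^+))$ correspond to the maximal faces $F$ of $Q$ with $F \cap \theta(P^+) = \emptyset$, exactness forces each nonzero such face to have rank one modulo $P^{\mathrm{gp}}$, and one must then show that the existence of two distinct such faces already forces $Q \cong P \oplus_{\N} \N^2$. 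This is the content of F.~Kato's Sections 1--2 and should either be carried out or cited rather than asserted. A smaller bookkeeping point: to pass from reducedness of $\mathscr{O}_{C_s,x}$ to reducedness at the torus-fixed point of the toric fiber (which is what the torsion-freeness and primitivity arguments use), you should first arrange the chart to be neat at $x$, i.e.\ perform the localization of $Q$ at the face of $x$ \emph{before} running the reducedness argument, not only at the end when reading off the three models.
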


The tropicalization of a log curve is the dual graph of its underlying nodal curve, enriched with the data of the smoothing parameters from its log structure.

\begin{definition}
Given an $n$-marked log curve $\pi: C \to S$ where $S$ is a log point, the \emph{tropicalization} $\trop(C)$ of $C$ is the $n$-marked tropical curve with
edge lengths in $\Gamma(S, \ol{M}_{S})$ which has
\begin{enumerate}
  \item a vertex for each component of $C$;
  \item an edge for each node of $C$, incident to the components of $C$ which form the branches of the node;
  \item a leg for each marked point of $C$, rooted at the component of $C$ to which the marked point belongs
\end{enumerate}
and
\begin{enumerate}
  \item for each vertex $v$, the genus $g(v)$ is the genus of the normalization of the corresponding component of $C$;
  \item for each edge $e$, the length $\delta(e) \in \Gamma(S, \ol{M}_S)$ is the smoothing parameter of the node $e$.
\end{enumerate}
\end{definition}

See Figure \ref{fig:dual_graph_example} for an example of tropicalization. Note that the tropicalization may contain loops: consider the nodal cubic.

Sections of the characteristic sheaf of $C$ are interpreted tropically as piecewise linear functions.

\begin{theorem}
Let $\pi : C \to S$ be a log curve over the spectrum of an algebraically closed field. Then there is a bijection
\begin{align*}
  \PL : \Gamma(C, \ol{M}_C) &\overset{\sim}{\longrightarrow} \PL(\trop(C)) \\
     \sigma &\mapsto \PL(\sigma)
\end{align*}
where
\begin{enumerate}
  \item the value of $\PL(\sigma)$ at a vertex $v$ of $\Gamma(C)$ is the stalk of $\sigma$ at the generic point of the corresponding component of $C$;
  \item the slope of $\PL(\sigma)$ at a leg $l$ of $\Gamma(C)$ is the image of $\sigma$ in $(\ol{M}_C/\pi^{-1}\ol{M}_S)_{p} \cong \mathbb{N}$ where $p$ is the marked point corresponding to $l$.
\end{enumerate}
\end{theorem}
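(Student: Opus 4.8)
The plan is to identify both sides of the asserted bijection with the same explicit combinatorial data, using that $\ol{M}_C$ is a constructible sheaf on $C$. Write $P := \Gamma(S, \ol{M}_S)$. By case (i) of Theorem~\ref{thm:log_curve_local_structure} (the smooth germ), away from the nodes and marked points $\ol{M}_C$ is the constant sheaf with value $P$; in particular it is constructible with respect to the stratification of $C$ into its smooth unmarked locus, its nodes, and its marked points. Hence a global section $\sigma \in \Gamma(C, \ol{M}_C)$ is precisely the gluing data of: an element $f(v) \in P$ for each component $v$ (namely the germ $\sigma_{\eta_v}$ at the generic point $\eta_v$); a germ $\sigma_p \in \ol{M}_{C,p}$ at each marked point $p$; and a germ $\sigma_e \in \ol{M}_{C,e}$ at each node $e$; subject to the requirement that each $\sigma_p$ and each $\sigma_e$ generizes to the value(s) $f(\cdot)$ of the component(s) on which it lies. (We may work Zariski-locally, the ground field being algebraically closed.) So the theorem reduces to computing these stalks together with their generization maps, which is exactly what Theorem~\ref{thm:log_curve_local_structure} supplies.

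First I would dispatch the marked points. By case (ii) (the germ of a marked point), at a marked point $p$ on a component $v$ one has $\ol{M}_{C,p} \cong P \oplus \N$ with the generization map to $\eta_v$ equal to the projection to $P$; thus $\sigma_p$ is exactly a pair $(f(v), m)$ with $m \in \N$, no condition is imposed beyond recording $f(v)$, and $m$ is the image of $\sigma$ in $(\ol{M}_C/\pi^{-1}\ol{M}_S)_p \cong \N$. This $m$ is what becomes the slope $m(l)$ at the corresponding leg. Next the nodes: by case (iii) (the node), if $e$ is a node with branches on components $v$ and $w$ and smoothing parameter $\delta(e) \in P$, then $\ol{M}_{C,e} \cong \N^2 \oplus_{\N} P$, the pushout of the diagonal $\N \to \N^2$ and the map $\N \to P$ sending $1$ to $\delta(e)$; the two generization maps $\ol{M}_{C,e} \to P$ send the class of $((i,j),c)$ to $c + i\,\delta(e)$ and $c + j\,\delta(e)$ for the two branches. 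Consequently $f(v)$ and $f(w)$ must differ by an integer multiple of $\delta(e)$ in $P^{\mathrm{gp}}$, which is exactly the defining condition of a piecewise linear function on $\trop(C)$, and conversely any such pair of values is realized by some $\sigma_e$. Hence $\PL(\sigma)$ is a piecewise linear function and $\PL$ is surjective onto $\PL(\trop(C))$.

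For injectivity, $\sigma$ can be recovered from $(f(v))_v$ together with the slopes $(m(l))_l$: the marked-point germs are then the pairs $(f(v), m(l))$, and the node germ at $e$ is the unique class in $\N^2 \oplus_{\N} P$ generizing to $f(v)$ and $f(w)$, unique because $\delta(e) \neq 0$ (over a log point every node is non-smoothable, since $t$ must vanish in case (iii)) and $P$, being the characteristic monoid of an fs log point, is torsion-free, so $m\,\delta(e) = m'\,\delta(e)$ forces $m = m'$. I expect the one genuinely delicate point to be the bookkeeping in case (iii): getting the pushout description of $\ol{M}_{C,e}$ and, above all, its two generization maps exactly right, since it is the relation $xy = t$ in the local model that produces the divisibility condition, and a sign or branch error would scramble the correspondence with $\PL(\trop(C))$. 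Everything else --- reassembling the gluing data into a well-defined global section, and noting that $\PL$ is even an isomorphism of monoids --- is routine.
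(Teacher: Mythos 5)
Your argument is correct. The paper does not prove this statement itself --- it simply defers to \cite[Remark 7.3]{ccuw} --- so there is no in-paper argument to compare against; your direct computation (a section of the constructible sheaf $\ol{M}_C$ as compatible stalk data at generic points, markings, and nodes, with the stalk at a node described as the pushout $P \oplus_{\N} \N^2$ whose two generization maps produce exactly the ``integer multiple of $\delta(e)$'' condition) is the standard proof and is essentially what the cited reference does. The two points you single out as delicate are handled correctly: the generization maps out of the node stalk are as you state, and injectivity does come down to $\delta(e) \neq 0$ at an actual node over a log point together with torsion-freeness of $P^{\mathrm{gp}}$, which holds because $P$ is sharp and saturated.
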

\begin{proof}
See, for example \cite[Remark 7.3]{ccuw}.
\end{proof}

For a general log curve, this interpretation extends nicely over an \'etale neighborhood of each point.

\begin{theorem} \label{thm:uniform_charts}
Let $\pi : C \to S$ be a log curve and let $s$ be a geometric point of $S$. Then there is an \'etale neighborhood $U$ of $s$ in $S$ so that
\begin{enumerate}
  \item $\Gamma(U, \ol{M}_S) \to \ol{M}_{S,s}$ and $\Gamma(C|_U, \ol{M}_{C}) \to \Gamma(C|_s, \ol{M}_{C|_s})$  are isomorphisms;
  \item for each geometric point $t$ of $U$, there is a canonical face contraction
  \[
    \trop(C_s) \to \trop(C_t)
  \]
  induced by
  \[
    \ol{M}_{S,s} \overset{\sim}{\longleftarrow} \Gamma(U, \ol{M}_S) \longrightarrow \ol{M}_{S,t}.
  \]
  Moreover, this face contraction respects associated piecewise linear functions in the sense that
  \[
    \xymatrix{
      \Gamma(C|_s, \ol{M}_{C|_s}) \ar[d]_{\PL} & \Gamma(C|_U, \ol{M}_C) \ar[l]_{\sim} \ar[r] & \Gamma(C|_t, \ol{M}_{C|_t}) \ar[d]^{\PL} \\
      \PL(\trop(C|_s)) \ar[rr] & & \PL(\trop(C|_t))
    }
  \]
  commutes.
\end{enumerate}
\end{theorem}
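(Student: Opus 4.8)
The plan is to reduce both statements to combinatorics by choosing good charts. First I would arrange (1) for the base: it is standard that after replacing $S$ by an \'etale neighborhood of $s$ there is a chart $\beta : P \to M_S$ that is neat at $s$, i.e.\ with $P \to \ol M_{S,s}$ an isomorphism, and since $\ol M_S$ is a constructible sheaf of finitely generated monoids we may shrink further so that $\Gamma(U, \ol M_S) \to \ol M_{S,s}$ is an isomorphism as well (the filtered colimit computing the stalk stabilizes). For any geometric point $t$ of $U$, the induced map $\ol M_{S,s} = \Gamma(U, \ol M_S) \to \ol M_{S,t}$ is then the quotient of $P$ by the face $F_t = \{\, p \in P : \beta(p) \text{ is a unit at } t \,\}$ — this is a face because in a local ring a product is a unit precisely when both factors are — and in particular it is a quotient map.

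Next I would treat the curve using the local structure theorem (Theorem~\ref{thm:log_curve_local_structure}). Since $\pi$ is proper, $C_s$ is covered by finitely many of its \'etale-local models, and a spreading-out argument lets me shrink $U$ so that all of these are defined over $C|_U$. Over such a $U$ the characteristic sheaf $\ol M_C$ near $C_s$ is encoded by purely combinatorial data: the dual graph of $C_s$, the markings, and, for each node $e$, the nonzero element $\delta(e) \in \ol M_{S,s}$ that is the class of its local smoothing parameter. Two things follow. For (1), restriction to the central fiber is an isomorphism $\Gamma(C|_U, \ol M_C) \to \Gamma(C_s, \ol M_{C_s})$, since both monoids are cut out of $\prod_v \ol M_{S,s} \times \prod_l \N$ by the same edge-compatibility relations (equivalently, the same recipe that computes $\PL(\trop(C_s))$). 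Alternatively, one could invoke proper base change for the constructible sheaf of monoids $\ol M_C$ to identify $(\pi_*\ol M_C)_s$ with $\Gamma(C_s, \ol M_{C_s})$ and then stabilize the colimit. For the face contraction in (2), the nodes of $C_t$ are exactly the nodes $e$ of $C_s$ with $\delta(e) \notin F_t$, the rest being smoothed with their branches merged and genera recorded according to the axioms of a weighted edge contraction; combined with the quotient $\pi^\sharp : \ol M_{S,s} \twoheadrightarrow \ol M_{S,t}$ above, this is precisely a face contraction $\trop(C_s) \to \trop(C_t)$.

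Finally, the commutativity of the displayed square is naturality of the bijection $\PL$ of the preceding theorem (applied to the algebraically closed fibers $C_s$ and $C_t$). Under the isomorphism $\Gamma(C|_U, \ol M_C) \cong \Gamma(C_s, \ol M_{C_s})$ of (1) the left horizontal map becomes the identity, while the right horizontal map — restriction along $C_t \hookrightarrow C|_U$ — corresponds after $\PL$ to pullback $\PL(\trop(C_s)) \to \PL(\trop(C_t))$ along the face contraction, which is clear from the formula for $\PL$ in terms of stalks of sections at generic points of components and slopes at marked points, both manifestly compatible with applying $\pi^\sharp$ and contracting edges. The step I expect to be the crux is the uniformity of charts for the curve: producing one \'etale neighborhood $U$ over which all finitely many local models covering $C_s$ are simultaneously valid, so that $\Gamma(C|_U, \ol M_C) \to \Gamma(C_s, \ol M_{C_s})$ is genuinely an isomorphism and not merely surjective. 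The proper-base-change route sidesteps this but needs constructibility of $R^0\pi_*$ for sheaves of monoids along proper maps; either way, everything past this point is bookkeeping with the combinatorics of tropical curves.
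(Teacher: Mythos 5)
Your sketch is correct and matches what the paper's cited reference does: the paper's proof of this theorem is simply the citation to ``uniform sets of charts'' from \cite[Proposition 2.3.13]{bozlee_thesis}, which is exactly the object you construct — a neat chart at $s$ together with a finite collection of local models for the proper curve spread out over a common \'etale neighborhood $U$, followed by the combinatorial bookkeeping with faces and $\PL$. The one place I would tighten the wording is the parenthetical ``the filtered colimit computing the stalk stabilizes'' as a justification that $\Gamma(U, \ol{M}_S) \to \ol{M}_{S,s}$ becomes an isomorphism after shrinking: stabilization of the colimit over a finitely generated stalk gives surjectivity, but injectivity is a separate (true, but not automatic) point that uses the structure of $\ol{M}_S$ supplied by the neat chart and the fact that generization maps on $\ol{M}_S$ are face quotients.
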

\begin{proof}
This follows, for example, from the existence of ``uniform sets of charts,'' constructed in \cite[Proposition 2.3.13]{bozlee_thesis}.
\end{proof}

It follows that to define a section of the characteristic sheaf of $C$ it is equivalent to specify a piecewise linear function on each geometric fiber of $C$ so that the resulting piecewise linear functions are compatible with generization.

\begin{definition}
An $n$-marked log curve is a log curve $\pi : C \to S$ equipped with disjoint sections $\sigma_1, \ldots, \sigma_n : S \to C$ with image the marked points of $C$. An $n$-marked log curve is \emph{stable} if its underlying family of marked
nodal curves is Deligne-Mumford stable.
\end{definition}

\begin{theorem}(\cite[Theorem 4.5]{fkato_deformations})
There is a log structure on $\ol{\mathcal{M}}_{g,n}$, called the \emph{basic log structure}, so that $\ol{\mathcal{M}}_{g,n}$ represents
the stack of stable $n$-marked log curves of genus $g$ over the category of fs log schemes.

\end{theorem}

We will generally regard $\ol{\mathcal{M}}_{g,n}$ as a stack with the basic log structure and will freely confuse $\ol{\mathcal{M}}_{g,n}$ with its underlying algebraic stack. A stable log curve $\pi : C \to S$ is said to have the $\emph{basic log structure}$ if its log structure is pulled back from that of the universal stable log curve of $\ol{\mathcal{M}}_{g,n}$. In the case that $S$ is a geometric point, $\pi$ has the basic log structure if and only if the characteristic monoid $\ol{M}_S$ is freely generated by the edge lengths of $\trop(C)$. 

\subsection{Radially aligned curves}

We now build up the terminology to work with radially aligned curves. These were introduced by Santos-Parker in \cite{keli_thesis} under the name of ordered log curves and then popularized in \cite{rsw}.

\begin{definition}
Let $\Gamma$ be a tropical curve. A \emph{path} $W$ in $\Gamma$ is a sequence $v_0e_1v_1e_2 \cdots e_kv_k$ of vertices and edges in $\Gamma$ so that the vertices $v_i$ are distinct and $v_{i - 1}$ and $v_i$ are the ends of the edge $e_i$ for all $i$. Given subsets $A$ and $B$ of $V(\Gamma)$, we say that $W$ is a path from $A$ to $B$ if $v_0 \in A$, $v_k \in B$, and $v_i \not\in A \cup B$ for $i \neq 0, k$.
\end{definition}

\begin{definition}
Given a proper curve $C$ over the spectrum of an algebraically closed field, a \emph{subcurve} of $C$ is a union of irreducible components of $C$, possibly empty.

The \emph{core} of $C$ is the minimal connected subcurve of $C$ with the same genus as $C$. Analogously the core of a tropical curve $\Gamma$ is the minimal connected vertex-induced subgraph of the same genus as $\Gamma$.
\end{definition}

\begin{definition}
Given a tropical curve $\Gamma$ of genus one, we define a piecewise linear function $\lambda$ on $\Gamma$ measuring ``distance from the core'' as follows. If $v$ is a vertex in the core of $\Gamma$, we set
\[
  \lambda(v) = 0.
\]
If $v$ is a vertex outside of the core of $\Gamma$, we let $W = v_0e_1v_1e_2 \cdots e_kv_k$ be the unique path from the core of $\Gamma$ to $v$ and set
\[
  \lambda(v) = \sum_{i = 1}^k \delta_{e_i}.
\]
Finally, we set the slope of $\lambda$ to be 1 at all marked points.

This is compatible with generization, so for any stable log curve $(\pi : C \to S; \sigma_1, \ldots, \sigma_n)$ of genus one, we let $\lambda \in \Gamma(S, \ol{M}_S)$ be the unique section of the characteristic bundle whose
restriction to geometric fibers has corresponding piecewise linear function as in the last paragraph.
\end{definition}

\begin{definition} \label{def:msbar_order}
If $P$ is any fs sharp monoid, we give the elements of $P$ a partial order by the rule $p \leq q$ if and only if there exists $r \in P$ with $q = p + r$.
\end{definition}

\begin{definition} \label{def:basic_radially_aligned}
A stable $n$-marked tropical curve of genus one with edge lengths in $P$ is \emph{radially aligned} if, for each pair of vertices $v, w$ of $\Gamma$, $\lambda(v)$ is comparable to $\lambda(w)$ in $P$.

Given such a radially aligned curve, let
\[
  0 < \rho_1 < \cdots < \rho_k
\]
be the distinct values of $\lambda(v)$ as $v$ varies over the components of $C$, and let $\delta_1, \ldots, \delta_l$ be the lengths of the edges of $\trop(C)$ internal to the core of $\Gamma$. Let $e_1 = \rho_1, e_2 = \rho_2 - \rho_1, \ldots, e_k = \rho_k - \rho_{k - 1}$. If $P$ is freely generated by
\[
  \{ e_1, \ldots, e_k \} \cup \{ \delta_1, \ldots, \delta_l \},
\]
then we say that $\Gamma$ is a \emph{basic radially aligned tropical curve}. An element of $P$ is said to \emph{have no contribution from the core} if it lies in the submonoid generated by $e_1, \ldots, e_k$.

A stable log curve $(\pi : C \to S; \sigma_1, \ldots, \sigma_n)$ of genus one with $n$ markings is \emph{radially aligned} or \emph{has a basic radially aligned log structure} if the tropicalizations of its geometric fibers
with their pulled back log structure are respectively radially aligned or basic radially aligned. An element $\rho \in \Gamma(S, \ol{M}_{S})$ \emph{has no contribution from the core} if the same holds of its
stalks at the geometric points of $S$.
\end{definition}

There is a moduli stack with log structure parametrizing radially aligned log curves.

\begin{theorem} \label{thm:M_1n_rad_exists} \cite[Proposition 3.3.4]{rsw}\hfill
\begin{enumerate}
\item There is a Deligne-Mumford stack with locally free log structure $\ol{\mathcal{M}}_{1,n}^{rad}$ whose $S$-points for $S$ an fs log scheme are the $n$-marked radially aligned curves $\pi : C \to S$ over $S$.  We say its log structure is \emph{the} basic radially aligned log structure.

\item There is a natural map $\ol{\mathcal{M}}_{1,n}^{rad} \to \ol{\mathcal{M}}_{1,n}$ induced by a logarithmic blowup and it restricts to an isomorphism on $\mathcal{M}_{1,n}$.
\end{enumerate}
\end{theorem}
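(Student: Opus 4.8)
The statement is cited from \cite{rsw}; here is how I would prove it. The plan is to realize $\ol{\mathcal{M}}_{1,n}^{rad}$ as a logarithmic modification of $\ol{\mathcal{M}}_{1,n}$ attached to an explicit smooth subdivision of the tropical moduli stack, and then to extract the moduli interpretation from the universal property of logarithmic modifications. Recall that $\ol{\mathcal{M}}_{1,n}$, with its boundary log structure, is log regular, and that its associated cone stack is the moduli stack $\ol{M}^{trop}_{1,n}$ of stable $n$-marked genus one tropical curves: it is glued from the cones $\sigma_\Gamma = \R_{\geq 0}^{E(\Gamma)}$, one for each stable genus one dual graph $\Gamma$, along the face maps induced by weighted edge contractions and along the $\Aut(\Gamma)$.

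First I would define the \emph{radial subdivision} $\Sigma^{rad}$ of $\ol{M}^{trop}_{1,n}$. On the cone $\sigma_\Gamma$, each vertex $v$ determines a linear functional $\lambda_v$, namely the value of $\lambda$ at $v$, which is the sum of the edge-length coordinates along the unique path from the core to $v$ (and $0$ if $v$ lies in the core). The hyperplanes $\{\lambda_v = \lambda_w\}$, as $v,w$ range over $V(\Gamma)$, cut $\sigma_\Gamma$ into subcones indexed by the linear orderings of the values $\{\lambda_v\}$ that are compatible with the forced inequalities (those coming from containments of paths); these subcones form a fan $\Sigma_\Gamma$ subdividing $\sigma_\Gamma$. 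Three points must be checked. First, each maximal cone of $\Sigma_\Gamma$ is \emph{smooth}: writing $0 < \rho_1 < \cdots < \rho_k$ for the distinct $\lambda$-values in the chosen order and $\delta_1, \ldots, \delta_l$ for the core edges, the elements $e_1 = \rho_1$, $e_i = \rho_i - \rho_{i-1}$ together with $\delta_1, \ldots, \delta_l$ form a basis of $\Z^{E(\Gamma)}$ — this is exactly the ``basic radially aligned'' condition of Definition \ref{def:basic_radially_aligned}, and here genus one enters essentially, since the non-core part of $\Gamma$ is a forest of rational tails rooted at the core, so the change-of-coordinates matrix between the edge lengths and $(e_\bullet, \delta_\bullet)$ is, after reordering, lower unitriangular; one proves this by induction on the rooted forest. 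Second, the $\Sigma_\Gamma$ are compatible with the face maps: restricting $\lambda_v$ to a face $\sigma_{\Gamma'} \subseteq \sigma_\Gamma$ recovers the $\lambda$-functionals of $\Gamma'$ — checking separately the cases of contracting a core edge, a non-core edge, and a non-core edge incident to the core — so $\Sigma_\Gamma$ restricts to $\Sigma_{\Gamma'}$. Third, the $\Sigma_\Gamma$ are $\Aut(\Gamma)$-equivariant, since an automorphism permutes vertices and hence the defining inequalities. Thus the $\Sigma_\Gamma$ glue to a smooth subdivision $\Sigma^{rad} \to \ol{M}^{trop}_{1,n}$, and it is an isomorphism over the zero cone.

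Next I would invoke the equivalence between subdivisions of the cone stack of a log regular stack and logarithmic modifications (Kato's theory of toroidal modifications, in the stacky/Artin-fan formulation). The subdivision $\Sigma^{rad}$ produces a logarithmic modification $\ol{\mathcal{M}}^{rad}_{1,n} \to \ol{\mathcal{M}}_{1,n}$, which is proper and representable — hence has Deligne--Mumford source — realized by a logarithmic blowup, and which restricts to an isomorphism over the stratum $\mathcal{M}_{1,n}$ (where the log structure, and hence $\Sigma^{rad}$, is trivial). Because $\Sigma^{rad}$ is smooth, $\ol{\mathcal{M}}^{rad}_{1,n}$ carries a locally free log structure, étale locally with characteristic monoid free on the $e_i$ and $\delta_j$ above, which are precisely the charts of Definition \ref{def:basic_radially_aligned}. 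Finally, the universal property of this modification says that an $S$-point of $\ol{\mathcal{M}}^{rad}_{1,n}$ is a stable log curve $\pi : C \to S$ whose induced map $S \to \ol{M}^{trop}_{1,n}$ factors through $\Sigma^{rad}$; unwinding, this says exactly that on each geometric fiber the values $\lambda(v)$ are pairwise comparable in the characteristic monoid, i.e. that $C/S$ is radially aligned. This identifies $\ol{\mathcal{M}}^{rad}_{1,n}$ with the moduli stack of $n$-marked radially aligned curves and its log structure with ``the'' basic radially aligned one, proving (i) and (ii).

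The main obstacle is the smoothness of the radial subdivision: showing it is a \emph{smooth} subdivision and not merely a subdivision. This is the combinatorial core of the construction and the one place where genus one is used in an essential way — the analogous statement fails for an arbitrary graph, and one needs the rooted-forest structure of the rational tails (equivalently, that $\lambda$ is built from nested sums of edge lengths) to conclude that the relevant integer matrices are unimodular. The compatibility checks with face maps and automorphisms, and the appeal to the logarithmic-modification dictionary in the last step, are routine by comparison, modulo care with the handful of edge-contraction cases and with the stacky (rather than scheme-theoretic) form of the cone-complex/log-modification correspondence.
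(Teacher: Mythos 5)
The paper offers no proof of this statement; it is imported verbatim from \cite[Proposition 3.3.4]{rsw}. Your argument --- subdividing the cone stack of $\ol{\mathcal{M}}_{1,n}$ by the orderings of the distances $\lambda(v)$ to the core, verifying smoothness via the unimodular change of coordinates to $(e_1,\ldots,e_k,\delta_1,\ldots,\delta_l)$ and compatibility with face maps and automorphisms, and then invoking the correspondence between subdivisions and logarithmic modifications --- is precisely the construction carried out in that reference, so your proposal is correct and follows essentially the same route as the cited source.
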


A stable log curve $(\pi : C \to S; \sigma_1, \ldots, \sigma_n)$ has a basic radially aligned
log structure precisely when the log structure on $\pi : C \to S$ is that pulled back from the
the universal curve $\mathcal{C}_{1,n}^{rad} \to \ol{\mathcal{M}}_{1,n}^{rad}$ along the map
$S \to \ol{\mathcal{M}}_{1,n}^{rad}$. We remark that a fixed family of nodal curves may be
enhanced to a family of basic radially-aligned log curves in more than one way, which we
illustrate with an example.

\begin{example}
Let $\pi : C \to S = \Spec k$ be a stable curve with the basic log structure over the
spectrum of an algebraically closed field, and suppose that its tropicalization is

\medskip

\begin{center}
  \includegraphics[width=1.5in]{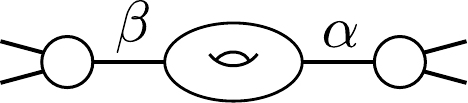}
\end{center}

\medskip

There is an associated map $S \to \ol{\mathcal{M}}_{1,4}$. The basic log structure on $S$ comes from the chart
\[
  \N \tilde{\alpha} \oplus \N \tilde{\beta} \to k
\]
sending $\tilde{\alpha}, \tilde{\beta} \mapsto 0$. The edge lengths $\alpha, \beta$
are the respective images of $\tilde{\alpha}$ and $\tilde{\beta}$ in the characteristic sheaf. Locally in $\ol{\mathcal{M}}_{1,4}$ near the image of $S$,
the map $\ol{\mathcal{M}}_{1,4}^{rad} \to \ol{\mathcal{M}}_{1,4}$ is given by the log blowup
of the log ideal generated by $\alpha, \beta$, as these are the distances that we wish to make comparable. We refer the interested reader to \cite[Chapter III, Section 2.6]{ogus} for details
on log blowups.

We may compute all of the basic radially aligned log stuctures on $C$ by computing the restriction of this blowup to $S$. We construct the blowup by first freely adjoining the element $\tilde{\alpha} - \tilde{\beta}$ to the log structure of $S$,
adjoining an element to $k$ for $\tilde{\alpha} - \tilde{\beta}$ to map to, doing likewise for $\tilde{\beta} - \tilde{\alpha}$, and finally gluing over the overlap. That is, $S \times_{\ol{\mathcal{M}}_{1,4}} \ol{\mathcal{M}}_{1,4}^{rad}$ possesses a cover by two open sets $U = \Spec k[t]$ (where $\alpha \geq \beta)$ and $V = \Spec k[t^{-1}]$ (where $\beta \geq \alpha$) with log structure on $U$ induced by
\begin{align*}
  \N (\tilde{\alpha} - \tilde{\beta}) \oplus \N \tilde{\beta} &\to k[t] \\
  \tilde{\alpha} - \tilde{\beta} & \mapsto t \\
  \tilde{\beta} & \mapsto 0
\end{align*}
and log structure on $V$ induced by
\begin{align*}
  \N \tilde{\alpha} \oplus \N (\tilde{\beta} - \tilde{\alpha}) &\to k[t^{-1}] \\
  \tilde{\alpha} & \mapsto 0 \\
  \tilde{\beta} - \tilde{\alpha} & \mapsto t^{-1}.
\end{align*}
The two charts are glued in the obvious way. Notice that on the intersection
$U \cap V = \Spec k[t,t^{-1}]$ the sections $\tilde{\alpha} - \tilde{\beta}$ and $\tilde{\beta} - \tilde{\alpha}$ of the log structure restrict to units, so that their images in
the characteristic sheaf are 0. It follows that $\alpha$
and $\beta$ are equal over a $\mathbb{G}_m$'s worth of possible basic radially aligned enhancements of $C$. See Figure \ref{fig:radial_alignments_example}.

\begin{figure}
\begin{center}
  \includegraphics[height=2in]{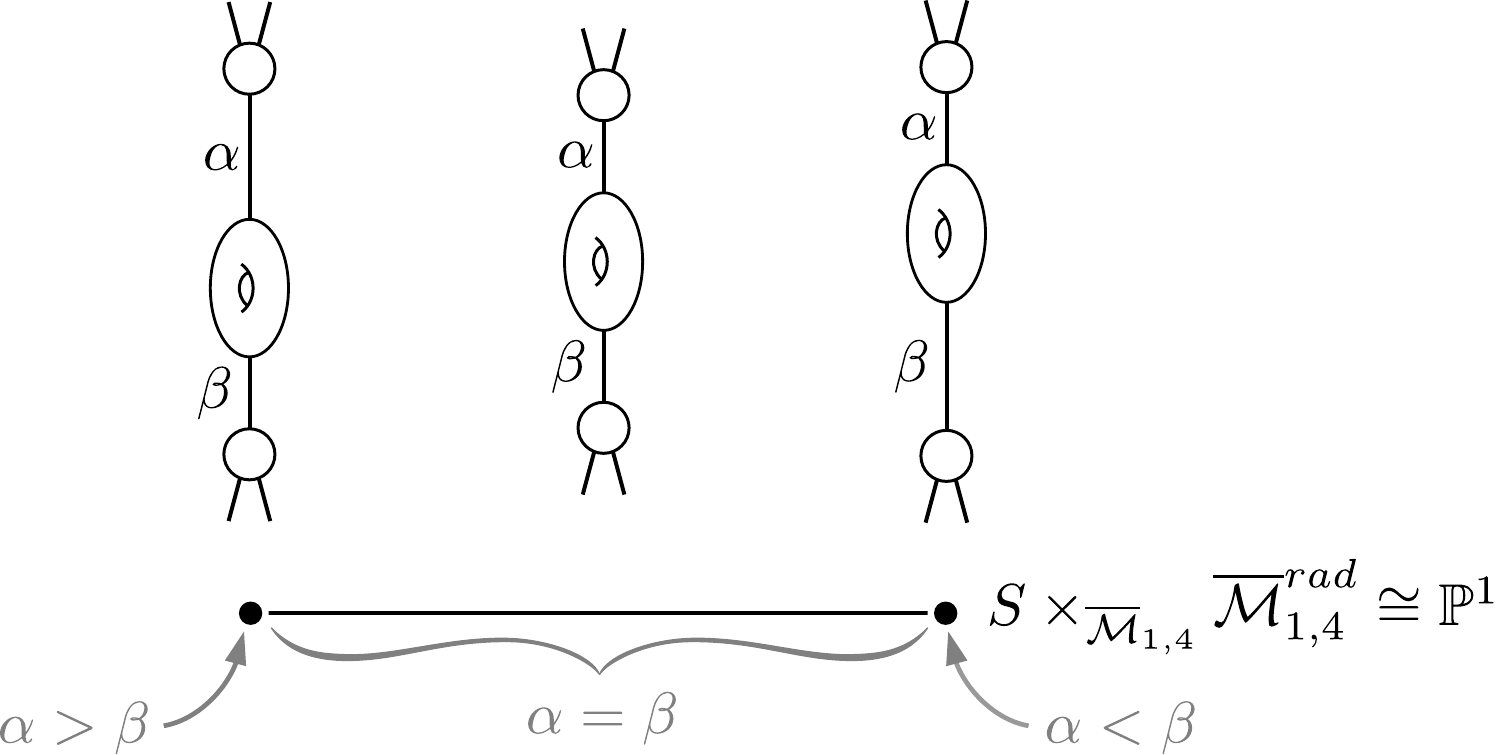}
\end{center}
\caption{The $\mathbb{P}^1$ of radial alignments of a basic stable log curve with two edges.}
\label{fig:radial_alignments_example}
\end{figure}

\end{example}

\begin{definition} \label{def:partition_type}
Let $\Gamma$ be a radially aligned tropical curve with ordered radii $0 < \rho_1 < \ldots < \rho_k$.

Given a radius $\rho$, we may form a tropical curve $\widetilde{\Gamma}(\rho)$ by subdividing the edges and legs of $\Gamma$ where $\lambda = \rho$,
then deleting the locus where $\lambda < \rho$. We define the \emph{partition associated to the radius $\rho$} to be the partition of $\{1, \ldots, n\}$ induced by the components of $\widetilde{\Gamma}(\rho)$, and we denote it by $\Part(\rho)$.

We say that the resulting strict chain of partitions
\[
  \Part(\rho_1) \prec \Part(\rho_2) \prec \cdots \prec \Part(\rho_k)
\]
is the \emph{partition type} of $\Gamma$. See Figure \ref{fig:partition_type} for an example.
\end{definition}

It would be natural to include the partition $\Part(0)$ in the partition type as well, but we choose not to for a few reasons. The first is that we always have $\Part(0) = \{ \{ 1, 2, \ldots, n \} \}$, since $\Part(0)$ is the partition of the markings induced by deleting no components. So including $\Part(0)$ in the list does not convey more information. For another, unlike the other comparisons, the comparison $\Part(0) \preceq \Part(\rho_1)$ need not be strict: it may be that both are the indiscrete partition. For example, see Figure \ref{fig:partition_type}.

\begin{figure}
    \centering
    \includegraphics[width=2in]{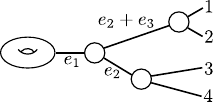}
    \caption{A basic radially aligned curve with partition type
    \[
      \{ \{ 1, 2, 3, 4 \} \} \prec \{ \{1, 2 \}, \{3, 4\} \} \prec \{ \{ 1, 2 \}, \{ 3 \}, \{4\} \}.
    \]
    We draw a torus to indicate a vertex of genus one.}
    \label{fig:partition_type}
\end{figure}

\section{Contractions of the universal radially-aligned curve} \label{sec:radially_aligned_contractions}

Part of the utility of families of radially aligned curves is that they are easy to contract to families of curves with Gorenstein singularities, even at the level of
a universal curve. By exploring the possible contractions of the universal curve of $\ol{\mathcal{M}}_{1,n}^{rad}$ we find regular birational maps
$\ol{\mathcal{M}}_{1,n}^{rad} \to \ol{\mathcal{M}}_{1,n}(Q)$ for each $Q$. It was this computation that identified the $Q$-stable moduli spaces.

The following theorem says that in order to contract a family of radially aligned curves, all we need is the data of a tropical radius for
each curve in the family. This idea was key for the results of \cite{keli_thesis} and \cite{rsw} and can be done using their language, see
\cite[Section 5]{keli_thesis} and \cite[Section 3.7]{rsw}. We give a proof using the language of \cite{bozlee_thesis} for its convenience and generality.

\begin{theorem} \label{thm:contraction}
Let $\pi : C \to S$ be a family of $n$-marked radially aligned log curves. Let $\rho \in \Gamma(S, \ol{M}_S)$ be a section of the
characteristic monoid such that ``$\rho$ is a radius at all geometric points of $s$", that is, for each geometric point $s$ of $S$,
there is a vertex $v$ of $\trop(C|_s)$ so that $\lambda(v) = \rho|_s$.

Then there is a diagram
\[
\xymatrix{
    & \widetilde{C} \ar[rd]^{\tau} \ar[dl]_{\phi} & \\
   C \ar[rd]_{\pi} & & \ol{C} \ar[dl]^{\ol{\pi}} \\
  & S &
}
\]
where
\begin{enumerate}
  \item $\phi$ is a log blowup inducing the subdivision at the locus where $\lambda = \rho$ on tropicalizations;
  \item $\ol{\pi} : \ol{C} \to S$ is a flat and proper family of Gorenstein curves of genus one;
  \item $\tau$ is a surjective map whose restriction to geometric fibers contracts the locus (if non-empty) where $\lambda < \rho$
  to an elliptic singularity of level $\Part(\rho|_s)$ and restricts further to an isomorphism in the complement of this locus.
\end{enumerate}

Moreover, formation of the diagram commutes with base change in $S$.
\end{theorem}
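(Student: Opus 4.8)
The plan is to construct the diagram locally on $S$ using the uniform charts of Theorem~\ref{thm:uniform_charts}, build the contraction $\tau$ fiberwise via a well-chosen piecewise linear function, and then glue and check the fiber properties. First I would pass to an \'etale neighborhood $U$ of a geometric point $s$ as in Theorem~\ref{thm:uniform_charts}, so that sections of $\ol M_S$ and $\ol M_C$ are determined by their values on the central fiber and all nearby tropicalizations are face contractions of $\trop(C|_s)$. On $\trop(C|_s)$ the section $\lambda$ and the constant $\rho|_s$ both make sense, and the hypothesis says $\rho|_s = \lambda(v)$ for some vertex; radial alignment guarantees $\lambda$ is totally ordered, so the locus $\{\lambda = \rho\}$ and $\{\lambda < \rho\}$ are well-defined vertex- and edge-induced subcurves. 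The log blowup $\phi : \widetilde C \to C$ is the subdivision of $\trop(C)$ at the points where $\lambda = \rho$ (adding a vertex in the interior of each edge and leg crossing that radius), which exists and commutes with base change because subdivisions of the tropicalization correspond to log blowups of $C$ (this is the mechanism of \cite{rsw}, \cite{keli_thesis}, \cite{bozlee_thesis}).

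Next I would produce $\tau : \widetilde C \to \ol C$. On $\widetilde C$ consider the piecewise linear function $\mu = \min(\lambda, \rho)$, or rather $\rho - \min(\lambda,\rho)$, which is supported exactly on the subcurve where $\lambda \le \rho$, is zero on $\{\lambda \ge \rho\}$, and has slope $1$ pointing inward along the edges that were subdivided. The key point is that $\mu$ determines an ideal sheaf on $\widetilde C$ (via the log structure: $\mu \in \Gamma(\widetilde C, \ol M_{\widetilde C})$ pulls back to a generalized Cartier divisor / ideal) whose blowdown contracts precisely the locus $\{\lambda < \rho\}$ and leaves the rest alone; equivalently, $\ol C = \mathrm{Proj}$ of the Rees-type algebra associated to a sufficiently positive twist, or one takes $\ol C$ to be the image of $\widetilde C$ under the morphism to projective space defined by a relatively globally generated line bundle that is trivial on $\{\lambda < \rho\}$ and ample elsewhere. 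Since everything is defined from $\lambda$, $\rho$, and the log structure — all of which are compatible with generization by Theorem~\ref{thm:uniform_charts} — the construction is canonical and commutes with base change; in particular the local constructions over the $U$'s glue to a global $\ol C \to S$.

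Finally I would verify (i)--(iii) on geometric fibers, where I may work over an algebraically closed field and use the explicit structure of a single radially aligned curve. That $\phi$ induces the claimed subdivision is immediate from its construction. That $\tau$ contracts $\{\lambda < \rho\}$ to a single point on each fiber and is an isomorphism away from it follows because the contracted subcurve is connected (it contains the core, which is connected, and every component with $\lambda < \rho$ is joined to the core by a path along which $\lambda$ is nondecreasing) of arithmetic genus one, and the line bundle used is trivial there. Identifying the image point as an elliptic Gorenstein singularity of level $\mathrm{part}(\rho|_s)$ is the crux: one computes that the number of branches of the singularity equals the number of edges/legs of $\trop(C|_s)$ meeting the radius $\rho$ from outside, i.e. the number of connected components of the complement of $\{\lambda < \rho\}$ that attach to it, which by Definition~\ref{def:partition_type} is exactly $|\mathrm{Part}(\rho|_s)|$, and the markings on each branch are grouped exactly as $\mathrm{Part}(\rho|_s)$ prescribes; Gorenstein-ness and genus one then follow from Smyth's classification (\cite{smyth_mstable}) of genus-one Gorenstein singularities by branch number, once one checks the local analytic type matches. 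Flatness and properness of $\ol\pi$ follow from properness of $\widetilde\pi$ and surjectivity of $\tau$ together with the fact that the fibers all have the same arithmetic genus one (constancy of Euler characteristic), using the local criterion for flatness over the reduced base after reducing to the universal case. I expect the main obstacle to be the identification of the local analytic structure of the contracted point as precisely the elliptic Gorenstein singularity of the prescribed level — i.e., matching the abstract contraction with Smyth's normal forms — and ensuring this identification is uniform in families; the log-geometric bookkeeping of $\phi$ and base change is comparatively routine.
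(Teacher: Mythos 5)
Your proposal follows essentially the same route as the paper's proof: you subdivide at $\{\lambda=\rho\}$ via a log blowup, and the piecewise linear function you write, $\mu = \rho - \min(\lambda,\rho)$, is exactly the paper's $\ol\lambda = \max\{\rho-\lambda,0\}$; you also identify the branches of the singularity correctly as the images of the connected components of $\{\lambda\ge\rho\}$ in $\widetilde C$, which is how the paper argues that the singularity has level $\Part(\rho|_s)$. The genuine difference is that the paper treats the passage from the piecewise linear function to the contraction $\ol C$, with all the attendant verifications (existence of $\tau$, flatness and properness of $\ol\pi$, Gorenstein-ness and genus of the fibers, compatibility with base change), as a \emph{single invocation} of the main mesa-contraction theorem of \cite{bozlee_thesis} — it only remains to check that $\ol\lambda$ is a steep mesa supported on $\{\lambda<\rho\}$, which is easy, and then identify the level. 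You, by contrast, sketch a direct construction of $\ol C$ as a Proj or as an image under a relatively globally generated line bundle, and then try to verify flatness, Gorenstein-ness, and the local analytic type of the contracted point by hand. That plan is not wrong, but it amounts to re-proving the \cite{bozlee_thesis} theorem from scratch; the parts you flag as ``the crux'' and ``the main obstacle'' — matching the contracted point with Smyth's normal forms uniformly in families, and establishing flatness — are precisely the content of that theorem and are nontrivial to carry out. So the proposal's skeleton is right and matches the paper, but as written it leaves the heaviest steps as sketches where the paper has a clean citation; you would either need to cite the mesa theorem or actually carry out the deferred verifications.
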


\begin{proof}
The construction of $\phi$ is standard. Using the language of \cite{bozlee_thesis}, we then define a mesa $\ol{\lambda} \in \Gamma(\tilde{C}, \ol{M}_{\tilde{C}})$ on the resulting family of log curves $\widetilde{C} \to S$ with the formula
\[
  \ol{\lambda} = \max \{ \rho - \lambda, 0 \}.
\]
It is easy to check that $\ol{\lambda}$ defines a steep mesa with support on the locus where $\lambda < \rho$, so the main theorem of \cite{bozlee_thesis} yields the claimed diagram with the required properties. To see that the
elliptic singularities of $\ol{C}$ have the claimed level, we note that the branches of the singularity will be the images of the connected components of the locus in $\widetilde{C}$ where $\lambda \geq \rho$. These are precisely the connected components considered in the definition
of $\Part(\rho)$.
\end{proof}

We want to apply this theorem to the universal curve of $\ol{\mathcal{M}}_{1,n}^{rad}$. Our next task is to reduce the problem of enumerating
the possible radii $\rho \in \Gamma(\ol{\mathcal{M}}_{1,n}, \ol{M}_{\ol{\mathcal{M}}_{1,n}^{rad}})$ to something manageable.

\begin{lemma} \label{lem:basic_tropical_auts}
Let $\Gamma$ be a basic radially aligned curve. Then:
\begin{enumerate}
  \item If the core of $\Gamma$ consists of a vertex with a self-loop, then the only non-trivial automorphism of $\Gamma$ is the automorphism reversing the loop, but identity on everything else.
  \item If the core of $\Gamma$ consists of a pair of vertices with two edges, the only non-trivial automorphism of $\Gamma$ is the automorphism exchanging the edges of the core, and the identity on everything else.
  \item Otherwise, $\Gamma$ has no non-identity automorphisms.
\end{enumerate}
\end{lemma}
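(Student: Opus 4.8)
The plan is to analyze an arbitrary automorphism $\psi$ of $\Gamma$ by tracking what it must do to the core, then to the ``radial'' data attached to the vertices outside the core, and finally to the legs. First I would recall that an automorphism of a tropical curve is an invertible weighted edge contraction, so $\psi$ consists of a bijection on $X(\Gamma)$ commuting with $r_\Gamma$ and $\iota_\Gamma$, fixing every leg (because legs carry distinct markings and $\psi$ must preserve markings), and preserving edge lengths via an automorphism $\psi^\sharp$ of $P$. Since $\Gamma$ is basic, $P$ is the free monoid on $\{e_1,\dots,e_k\}\cup\{\delta_1,\dots,\delta_l\}$; an automorphism of a free commutative monoid is a permutation of a generating set, so $\psi^\sharp$ permutes these generators. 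The key first step is to show $\psi^\sharp$ is the identity: the values $\rho_j=e_1+\dots+e_j$ are exactly the distinct values of $\lambda$ on vertices, and $\psi$ must send a vertex with $\lambda$-value $\rho_j$ to another with the same $\lambda\circ\psi$-value; since $\lambda(\psi(v))=\psi^\sharp(\lambda(v))$ and $\psi^\sharp$ preserves the partial order, the chain $\rho_1<\dots<\rho_k$ is preserved, forcing $\psi^\sharp(\rho_j)=\rho_j$ for all $j$, hence $\psi^\sharp(e_j)=e_j$; and then the $\delta_i$ must be permuted among themselves, a point I will revisit when handling the core.

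Next I would use that $\psi$ preserves $\lambda$ (now that $\psi^\sharp=\mathrm{id}$ on the $e_j$'s) to argue it fixes the core pointwise away from the two exceptional configurations. A leg is fixed by $\psi$ and is rooted at some vertex, so $\psi$ fixes every vertex supporting a leg; more importantly, outside the core each vertex $v$ lies on a unique path from the core, and $\psi$ must send this path to the unique path from the core to $\psi(v)$ of the same total length, with the same legs hanging off in the same combinatorial positions. Working inward from the legs (or using that the tree hanging off each core vertex is rigid once its leg-labels are fixed — here is where stability, forcing genus-$0$ valence-$\geq 3$ vertices, and distinctness of markings rule out any nontrivial symmetry of these rational trees), I conclude $\psi$ is the identity on everything outside the core and on the attaching vertices of the core. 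The only remaining freedom is an automorphism of the core itself that is compatible with which trees/legs hang off which core vertex.

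The main obstacle — and really the heart of the lemma — is the case analysis for the core of a genus-one tropical curve. The core is the minimal connected genus-one subgraph, so $b_1=1$ and every vertex has valence $\geq 2$ within the core; such a graph is a single cycle, possibly with all of it being one vertex and a self-loop, or two vertices joined by two edges, or a longer cycle $v_1 e_1 v_2 e_2 \cdots v_r e_r v_1$. (Vertices of positive genus: a genus-one vertex with no loop is its own core with no edges, giving an isolated vertex, which stability excludes — this needs a remark.) For the self-loop case, the only graph automorphism is reversing the loop (swapping the two flags), which is the claimed (i); it is genuinely nontrivial as a weighted edge contraction even though it fixes the vertex, and it is compatible with any configuration of hanging trees since there is only one core vertex. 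For the two-edge case, the core has an edge-swap; this swap fixes both vertices, so it is compatible with the rest of $\Gamma$ regardless of what hangs off, giving (ii) — but I should note that $\psi^\sharp$ swapping $\delta_1\leftrightarrow\delta_2$ is allowed only if $\delta_1=\delta_2$, yet as an abstract automorphism of the free monoid the swap is always available, so this automorphism always exists at the tropical level. For a cycle of length $r\geq 3$, the dihedral symmetries of the cycle would have to fix each leg-bearing or tree-bearing vertex and (because of the previous paragraph) fix the attaching vertices, and a nontrivial rotation or reflection of a cycle of length $\geq 3$ moves some vertex; combined with the fact that a genus-one stable curve has at least one leg (it has $n\geq 1$ markings, and actually one can locate legs forcing rigidity) this kills all nontrivial automorphisms, giving (iii). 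I would close by assembling: $\psi$ restricted to the core is one of these, $\psi$ is the identity elsewhere, and the two named automorphisms do extend (trivially) to all of $\Gamma$ because they fix the core's contact vertices; hence the list (i)–(iii) is exhaustive.
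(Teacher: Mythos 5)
Your approach is essentially the same as the paper's: use the distinctness of markings, together with the forest structure of the complement of the core, to show any automorphism fixes every vertex, and then observe that the only remaining freedom is to swap the parallel edges of the core (or the two flags of a self-loop). The elaborations you add — that $\psi^\sharp$ must fix the $e_j$'s because it permutes the radii and preserves the monoid order, and that in the double-edge case $\psi^\sharp$ swaps $\delta_1 \leftrightarrow \delta_2$ — are correct and worth making explicit, though the paper compresses them into the single sentence that fixing all vertices ``implies that $\phi$ fixes all edges except possibly those who share incident vertices.''

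There is, however, a genuine error in your parenthetical about vertices of positive genus. You claim a genus-one vertex with no loop ``gives an isolated vertex, which stability excludes.'' Stability forbids $\Gamma$ itself from being an isolated vertex of genus one, but it does not forbid the \emph{core} of $\Gamma$ from being a single genus-one vertex with rational trees and legs attached — this is in fact the most common situation, arising as the tropicalization of a smooth elliptic curve with rational tails. So your reduction of the core to a cycle (self-loop, double edge, or longer cycle) omits a case. It falls trivially under (iii): with no core edges there are no parallel edges or self-loops, so the vertex-fixing argument already forces the identity. You should state this case explicitly rather than incorrectly dismissing it via stability.
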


\begin{proof}
Suppose $\Gamma$ is a basic radially aligned tropical curve. Let $\phi : \Gamma \to \Gamma$ be an invertible weighted edge contraction.

We argue that $\phi$ is the identity on the vertices of $\Gamma$. Let $v$ be a vertex of $\Gamma$. Notice that the complement of the core of $\Gamma$ consists a forest of trees, each of which we can root
at the vertex that attaches to the core. Furthermore, due to stability,
\begin{enumerate}
  \item if $v$ is a vertex outside of the core of $\Gamma$, $v$ is uniquely identified by the markings that lie on $v$ and the descendants of $v$;
  \item if $v$ is a vertex inside the core of $\Gamma$, there is at least one tree attached to $v$, and those trees are uniquely identified by their markings.
\end{enumerate}
An automorphism of $\Gamma$ must in particular preserve the markings. Therefore, $\phi$ fixes all of the vertices of $\Gamma$. This implies that $\phi$ fixes all edges except possibly those who share incident vertices. This implies the result.
\end{proof}

\begin{remark}
For a fixed $n$, there are only finitely many isomorphism classes of $n$-marked tropical curve with the basic radially aligned log structure.
\end{remark}

\begin{lemma} \label{lem:radially_aligned_strata}
Let $\Gamma$ be an $n$-marked basic radially aligned tropical curve with edge lengths in $P$. Write $\mathcal{M}^{rad}_{\Gamma}$ for the substack of $\ol{\mathcal{M}}_{1,n}^{rad}$ of curves whose tropicalizations are isomorphic to $\Gamma$. Then $\mathcal{M}^{rad}_{\Gamma}$ is an irreducible locally closed substack of $\ol{\mathcal{M}}_{1,n}^{rad}$.
\end{lemma}

\begin{proof}
Write $\mathcal{M}_{\Gamma}$ for the locally closed substack of stable $n$-marked curves of genus one whose dual graph is the underlying graph of $\Gamma$.
Recall that $\mathcal{M}_{\Gamma} \cong \prod_{v \in V(\Gamma)} \mathcal{M}_{g(v), \mathrm{val}(v)}$, where the valence $\mathrm{val}(v)$ is the number of flags incident to $v$. Since the $\mathcal{M}_{g(v), \mathrm{val}(v)}$'s are geometrically irreducible, so is $\mathcal{M}_{\Gamma}$. There is a forgetful map $\mathcal{M}^{rad}_{\Gamma} \to \mathcal{M}_{\Gamma}$ given by forgetting the log structure.

We recall from \cite[Proposition 3.3.4]{rsw} that the map $\ol{\mathcal{M}}^{rad}_{1,n} \to \ol{\mathcal{M}}_{1,n}$ is locally given as follows. Suppose given a map $S \to \ol{\mathcal{M}}_{1,n}$ so $S$ admits a global chart by a monoid $Q$. This induces a map $S \to V = \Spec \mathbb{Z}[Q]$. Let $\sigma$ be the rational polyhedral cone dual to $Q$. Let $\Sigma$ be the fan obtained by subdividing $\sigma$ along the hyperplanes where $\lambda(v) = \lambda(w)$ as $v$ and $w$ range among the vertices of $\trop(C_s)$,
and let $W$ be the toric variety associated to $\Sigma$. Then
\[
  S \times_{\ol{\mathcal{M}}_{1,n}} \ol{\mathcal{M}}_{1,n}^{rad} \cong S \times_V W.
\]

Suppose that $S \to \mathcal{M}_{1,n}^{rad}$ factors through $\mathcal{M}^{rad}_{\Gamma}$. Then $Q$ is the free monoid on the edges of $\Gamma$. By construction, there is a
cone of $\Sigma$ associated to each possible choice of ordering of the
distances $\lambda(v)$ as $v$ varies over $V(\Gamma)$. Since $\Gamma$ is basic radially
aligned, these distances are ordered, and their ordering
determines a cone $\tau$ of $\Sigma$. Let $W_{\tau} \subseteq W$ be the torus orbit associated to $\tau$. One may check that the locus in $S \times_V W$ in which the tropicalization is
isomorphic to $\Gamma$ is precisely the locus $S \times_V W_{\tau}$: this is the locus in which the stalks of the characteristic sheaf agree with $P$. Letting $S$ vary over a smooth cover of $\mathcal{M}^{rad}_{\Gamma}$, we see that $\mathcal{M}^{rad}_{\Gamma} \to \mathcal{M}_{\Gamma}$ is smooth-locally a $W_{\tau}$-bundle. Since the target is irreducible and $W_\tau$ is irreducible, $\mathcal{M}^{rad}_{\Gamma}$ is irreducible. Moreover it is a locally closed substack of $\ol{\mathcal{M}}_{1,n}^{rad}$ as this is true of $W_{\tau}$ inside $W$.
\end{proof}

\begin{lemma} \label{lem:characteristic_sections_as_tropical_lengths}
Let $I$ be the set of isomorphism classes of $n$-marked basic radially aligned tropical curves. Fix a representative $\Gamma$ with edge lengths in $P_\Gamma$ for each isomorphism class.

To give a section $\rho$ of the characteristic sheaf of $\ol{\mathcal{M}}_{1,n}^{rad}$ with no contribution from the core, it is equivalent to specify for each isomorphism class of basic radially aligned tropical curve an element $\rho_\Gamma \in P_\Gamma$ with no contribution from the core such that whenever $\Gamma, \Gamma' \in I$ and $\pi : \Gamma \to \Gamma'$ is a face contraction with $\pi^\sharp : P_\Gamma \to P_{\Gamma'}$ a quotient map, then $\pi^*\rho_\Gamma = \rho_{\Gamma'}$
\end{lemma}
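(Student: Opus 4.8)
The plan is to exhibit a bijection between sections $\rho \in \Gamma(\mathfrak{M}_{1,n}^{rad}, \ol{M}_{\mathfrak{M}_{1,n}^{rad}})$ with no contribution from the core and compatible families $(\rho_\Gamma)_{\Gamma \in I}$, using the fact (recorded after Theorem~\ref{thm:uniform_charts}) that a section of the characteristic sheaf is the same data as a piecewise linear function on each geometric fiber, compatible with generization. First I would observe that the geometric points of $\mathfrak{M}_{1,n}^{rad}$ are exactly the radially aligned log curves over (spectra of) algebraically closed fields, and that over such a point the tropicalization carries the basic radially aligned log structure \emph{after} passing to the strictly Henselian local ring --- more precisely, by Theorem~\ref{thm:M_1n_rad_exists} the log structure is locally free, so the characteristic monoid at a geometric point $s$ is freely generated, and $\trop(C_s)$ with its log structure is a basic radially aligned tropical curve. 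Thus every geometric point of $\mathfrak{M}_{1,n}^{rad}$ has tropicalization isomorphic to exactly one $\Gamma \in I$, and a choice of such isomorphism is unique up to the automorphisms classified in Lemma~\ref{lem:basic_tropical_auts}.

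The forward map sends $\rho$ to the family $\rho_\Gamma := \PL$-value of the restriction of $\rho$ to any geometric point whose tropicalization is $\Gamma$; I must check this is well-defined. Independence of the choice of geometric point and of the identifying isomorphism follows because (i) the automorphisms of $\Gamma$ from Lemma~\ref{lem:basic_tropical_auts} only ever swap two core edges or reverse a core self-loop, hence act trivially on any element with no contribution from the core (such an element is a sum of the $e_i$'s, which are fixed), and (ii) any two geometric points with isomorphic tropicalization lie in a common stratum, and Theorem~\ref{thm:uniform_charts}(i) says the characteristic sheaf is constant on a suitable étale neighborhood, so the value of $\rho$ is locally constant on the stratum; since the relevant stratum of $\mathfrak{M}_{1,n}^{rad}$ is connected (it is the locus where the dual graph is fixed, which is irreducible), the value is genuinely constant. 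Compatibility $\pi^*\rho_\Gamma = \rho_{\Gamma'}$ for a face contraction $\pi : \Gamma \to \Gamma'$ is then exactly the generization compatibility from Theorem~\ref{thm:uniform_charts}(2): a face contraction of basic tropical curves arises from a generization $t \rightsquigarrow s$ in the moduli stack, and the commuting square there identifies $\pi^*$ applied to the $\PL$-data of $\rho$ at $s$ with the $\PL$-data at $t$.

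For the inverse map, given a compatible family $(\rho_\Gamma)$ I would, for each geometric point $s$ of $\mathfrak{M}_{1,n}^{rad}$, choose an isomorphism $\trop(C_s) \cong \Gamma$ (well-defined up to the harmless automorphisms above, as noted) and declare $\PL$ of the desired section at $s$ to be $\rho_\Gamma$. To see these assemble into an honest section of $\ol{M}_{\mathfrak{M}_{1,n}^{rad}}$, I invoke the principle stated right after Theorem~\ref{thm:uniform_charts}: it suffices that the assignment be compatible with generization, and for a generization $t \rightsquigarrow s$ the induced map on tropicalizations is a face contraction $\trop(C_s) \to \trop(C_t)$, i.e. (after identifying with representatives) a face contraction $\Gamma \to \Gamma'$ with $\pi^\sharp$ a quotient; the hypothesis $\pi^*\rho_\Gamma = \rho_{\Gamma'}$ is precisely the required compatibility in the commuting square of Theorem~\ref{thm:uniform_charts}. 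That the two constructions are mutually inverse is then immediate from their definitions, and one checks the resulting $\rho$ has no contribution from the core because each $\rho_\Gamma$ does.

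The main obstacle is the well-definedness of the forward map, specifically pinning down that the value of $\rho$ is \emph{constant} (not merely locally constant) on the locus of geometric points with a fixed tropical type --- this is where I need both the connectedness of strata of $\mathfrak{M}_{1,n}^{rad}$ and the fact that the automorphisms of a basic radially aligned curve act trivially on no-core-contribution elements, so that the freedom in choosing the identifying isomorphism genuinely does not matter. Everything else is bookkeeping with Theorems~\ref{thm:uniform_charts} and~\ref{thm:M_1n_rad_exists} and Lemma~\ref{lem:basic_tropical_auts}.
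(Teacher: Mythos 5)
Your proposal is correct and takes essentially the same approach as the paper's proof: both rely on identifying sections with compatible families of PL data via Theorem~\ref{thm:uniform_charts}, and both handle well-definedness by appealing to the irreducibility of the strata $\mathcal{Z}_\Gamma$ together with Lemma~\ref{lem:basic_tropical_auts}. In fact you make slightly more explicit than the paper why Lemma~\ref{lem:basic_tropical_auts} suffices --- namely, that the nontrivial automorphisms only permute core edge lengths $\delta_j$ and therefore act trivially on elements with no contribution from the core.
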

\begin{proof}

For brevity, write $\ol{M}$ for the characteristic sheaf of $\ol{\mathcal{M}}_{1,n}^{rad}$. Since $\ol{\mathcal{M}}_{1,n}^{rad}$ is a Deligne-Mumford stack, we may choose an \'etale cover $U \to \ol{\mathcal{M}}_{1,n}^{rad}$ by a scheme $U$. For each point
$x$ of $U$, choose an algebraic closure $\ol{k}(x)$ of the residue field $k(x)$, and write $\ol{x}^U : \Spec \ol{k}(x) \to U$ for the natural map to $U$ and $\ol{x}$ for its composite with $\ol{\mathcal{M}}_{1,n}^{rad}$. Write $\Gamma_{\ol{x}}$ for
the tropicalization of the basic radially aligned log curve associated to $\ol{x}$ and write $P_{\ol{x}}$ for its associated monoid, i.e., $\ol{M}_{\ol{x}}$.
 
We may apply Theorem \ref{thm:uniform_charts} to find an \'etale neighborhood $U_{\ol{x}}$ of $\ol{x}^U$ in $U$ over which the pullback of the universal curve of $\ol{\mathcal{M}}_{1,n}^{rad}$ has the properties of
the theorem. Since $U$ is an \'etale cover of $\ol{\mathcal{M}}_{1,n}^{rad}$ and the log structure on $\ol{\mathcal{M}}_{1,n}^{rad}$ is divisorial, the maps $P_{\ol{x}} \overset{\sim}{\longleftarrow} \Gamma(U_{\ol{x}}, \ol{M}) \to \ol{M}_t$ vary
through \emph{all} of the quotients of $P_{\ol{x}}$ by its generators as $t$ varies through the geometric points of $U_{\ol{x}}$.
Then, since the $U_{\ol{x}}$'s form a cover, we may identify the global sections of $\ol{M}$ with elements $(\rho_{\ol{x}})$ of $\prod_{\ol{x}} \Gamma(U_{\ol{x}}, \ol{M}) \cong \prod_{\ol{x}} P_{\ol{x}}$, suitably
compatible on overlaps.

Now, for a pair of points $\ol{x}$ and $\ol{y}$, sections $\rho_{\ol{x}} \in \Gamma(U_{\ol{x}}, \ol{M})$ and $\rho_{\ol{y}} \in \Gamma(U_{\ol{y}}, \ol{M})$ agree on $U_{\ol{x},\ol{y}} := U_{\ol{x}} \times_{\ol{\mathcal{M}}_{1,n}^{rad}} U_{\ol{y}}$ if and only if their stalks at
geometric points $\ol{z}$ of $U_{\ol{x}, \ol{y}}$ agree. This translates to the statement that whenever $\Gamma_{\ol{z}}$ is a face contraction of both $\Gamma_{\ol{x}}$ and $\Gamma_{\ol{y}}$, and $\phi : \Gamma_{\ol{z}} \to \Gamma_{\ol{z}}$
is an automorphism, the stalk of $\rho_{\ol{x}}$ at $\ol{z}$ is $\phi^{\sharp}$ applied to the stalk of $\rho_{\ol{y}}$ at $\ol{z}$.

Suppose $\ol{x}$ and $\ol{y}$ are points, $\Gamma \in I$ and $\Gamma_{\ol{x}} \cong \Gamma \cong \Gamma_{\ol{y}}$. Then, in the notation of the previous lemma, since $\mathcal{M}^{rad}_\Gamma$ is irreducible, $U_{\ol{x},\ol{y}}$ must contain a point $\ol{z}$ that also maps into $\mathcal{M}^{rad}_\Gamma$. Then the elements corresponding to $\rho_{\ol{x}}$ and $\rho_{\ol{y}}$ on $P_\Gamma$ must differ by at most an automorphism of $\Gamma$. By Lemma \ref{lem:basic_tropical_auts}, they are actually equal, so we
have a well-defined element $\rho_\Gamma$ of $P_\Gamma$. The agreement of stalks at other points implies that the $\rho_\Gamma$'s are compatible with edge contraction. We obtain the converse by reversing this construction.
\end{proof}

In view of Lemma \ref{lem:characteristic_sections_as_tropical_lengths}, we introduce the notion of \emph{universal radius}.

\begin{definition} \label{def:univ_radius}
An $n$-marked \emph{universal radius} consists of the data of an element $\rho_\Gamma \in P_\Gamma$ for each $n$-marked basic radially aligned curve $\Gamma$ so that:
\begin{enumerate}
  \item for each $\Gamma$, $\rho_\Gamma = \lambda(v)$ for some vertex $v$ of $\Gamma$;
  \item if $\Gamma, \Gamma'$ are two $n$-marked radially aligned curves and $\pi : \Gamma \to \Gamma'$ is a face contraction, then $\pi^*\rho_\Gamma = \rho_{\Gamma'}$.
\end{enumerate}

We use the shorthand notation $(\rho_\Gamma)$ for the tuple of radii making up a universal radius, and denote by $\mathfrak{R}^{uni}_n$ the set of $n$-marked
universal radii.
\end{definition}

\begin{remark}
Condition (i) implies each $\rho_\Gamma$ has no contribution from the core. Condition (ii) implies that we only need to keep track of the \emph{finite} data of a choice of radius $\rho_\Gamma$ for each isomorphism class of $n$-marked basic radially aligned curve.
The maps $P_\Gamma \to P_{\Gamma'}$ induced by face contractions are just the coordinate projections. Therefore all we have to worry about to
satisfy condition (ii) is what happens when we set various subsets of the generators of $P_\Gamma$ ($\delta_1, \ldots, \delta_l$ and
$e_1, \ldots, e_k$ in the notation of Definition \ref{def:basic_radially_aligned}) to zero.
\end{remark}

We have therefore reduced the problem of finding a section of the characteristic sheaf of $\ol{\mathcal{M}}_{1,n}^{rad}$ to giving the finite collection of
tropical data that make up a universal radius. This is still a fair amount of data: see Figure \ref{fig:universal_radius_example} for an example when $n = 3$.
We will see that we can reduce the data of a universal radius further to that of a downward closed subset $Q$ of partitions on
$\{1, \ldots, n\}$. In Figure \ref{fig:universal_radius_example}, for example, the corresponding downward closed subset will be
\[
  Q = \{ \, \, \{ \{ 1, 2, 3 \} \}, \quad \{ \{2\}, \{ 1, 3 \} \}, \quad \{ \{3\}, \{1, 2\} \}  \, \, \}.
\]
This can be read off from the second row of the figure.

\begin{figure} 
    \centering
    \includegraphics[width=6in]{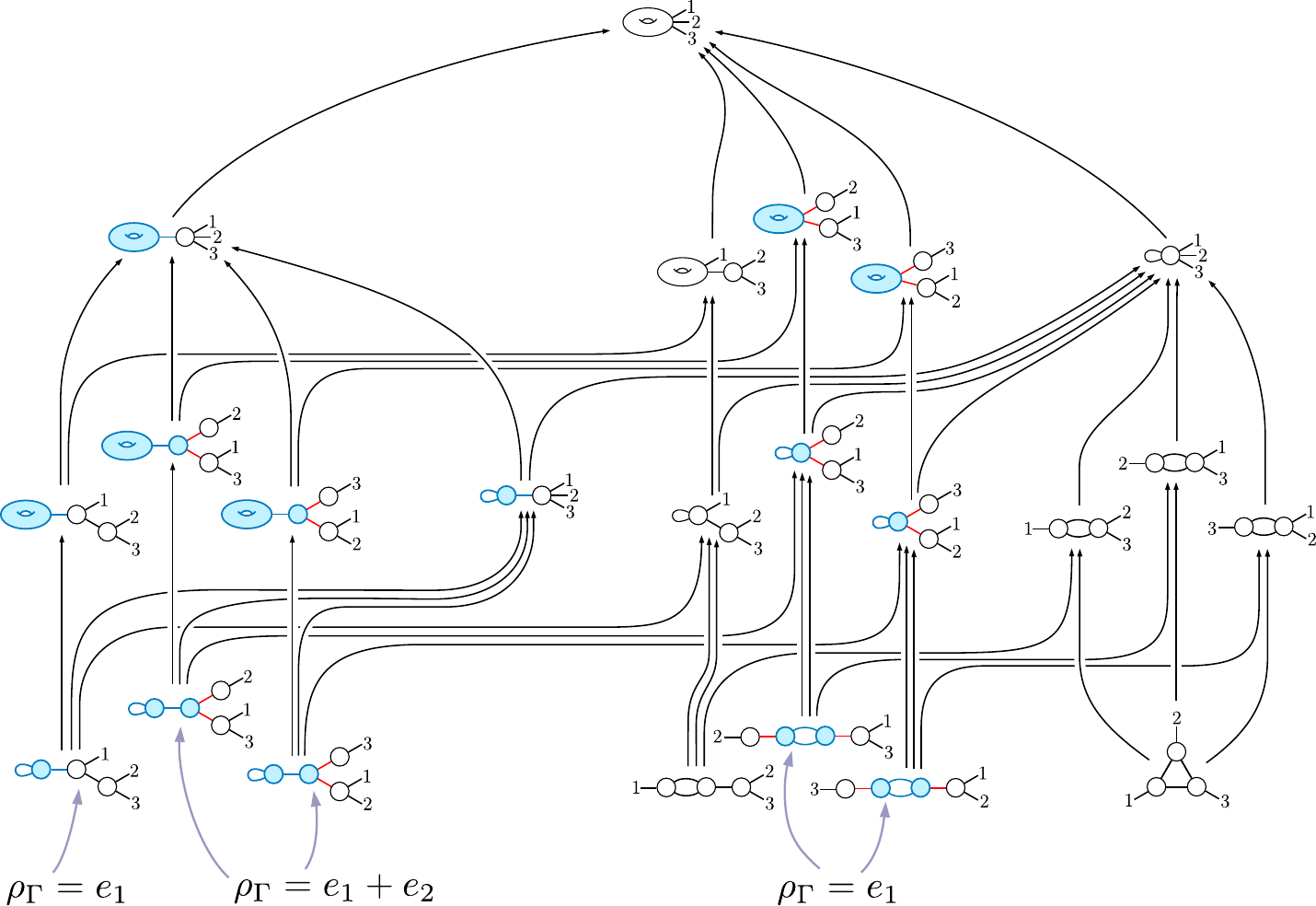}
    \caption{An example of the subdivided universal curve $\widetilde{C}$ on $\ol{\mathcal{M}}_{1,3}^{rad}$ associated to a universal radius. A torus indicates a vertex of genus one. We have labelled the nonzero $\rho_\Gamma$'s on the most degenerate tropical curves using the notation of Definition \ref{def:basic_radially_aligned}. The radii on other tropical curves can be deduced by following the indicated face contractions. The red edges of a particular curve have equal length: they come about by subdividing at the radius $\rho_\Gamma$. The blue components are those to be contracted. }
    \label{fig:universal_radius_example}
\end{figure}

\begin{definition}
If $\Gamma$ is an $n$-marked basic radially aligned tropical curve with $k$ nonzero radii $0 < \rho_1 < \cdots < \rho_k$ and smooth core, we say that $\Gamma$ is a \emph{$k$-layer tree}.
\end{definition}

\begin{lemma} \label{lem:partitions_increase_strictly}
Let $\Gamma$ be a basic radially-aligned tropical curve with $k$ nonzero radii $0 < \rho_1 < \cdots < \rho_k$. Then for each index $i < k$, there is a strict refinement $\Part(\rho_i) \prec \Part(\rho_{i+1})$.
\end{lemma}

\begin{proof}
Notice that if $\Gamma$ is replaced by the tropical curve in which all edges of the core are contracted, the sequence of partitions remains the same. Therefore we may assume that $\Gamma$ is a $k$-layer tree.

Orient the tree $\Gamma$ by taking its core as a root.
Resuming the notation of Definition \ref{def:partition_type}, let $V_i$ be the set of roots of the forest $\widetilde{\Gamma}(\rho_i)$ for each $i$. Observe that for each $i$, there are bijections between $V_i$, the connected components of $\widetilde{\Gamma}(\rho_i)$, and the parts of the partition $\Part(\rho_i)$. Notice that the connected components of $\widetilde{\Gamma}(\rho_{i+1})$ factor through the connected components
of $\widetilde{\Gamma}(\rho_i)$, so the partition of $\{1,\ldots,n\}$ induced by the connected components of $\widetilde{\Gamma}(\rho_{i+1})$
refines that induced by the components of $\widetilde{\Gamma}(\rho_{i})$.

To see that the refinement is strict, let $v$ be a vertex of $\Gamma$ so that $\lambda(v) = \rho_i$. By stability, there must be at least two
flags leaving $v$ in the direction of increasing $\lambda$. Then there are at least two vertices of $V_{i+1}$ that belong to the component of $\Gamma(\rho_i)$ containing $v$. It follows that the refinement is strict.
\end{proof}

\begin{proposition}
$1$-layer trees are in bijection with the non-discrete partitions of $\{1,\ldots, n\}$. 
\end{proposition}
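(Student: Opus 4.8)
The plan is to construct the bijection explicitly by recording, for a $1$-layer tree $\Gamma$, the partition $\Part(\rho_1)$, and to show that this single partition determines $\Gamma$ up to isomorphism. First I would unpack what a $1$-layer tree actually looks like: it has irreducible core (so the core is a single vertex $v_0$ of genus one, since a genus-one irreducible core with no other vertices either is a genus-one vertex or a genus-zero vertex with a self-loop; in a basic radially aligned curve stability and the core being a single vertex forces genus $1$) and exactly one nonzero radius value $\rho_1$. Thus every vertex $v \neq v_0$ has $\lambda(v) = \rho_1$, and since all these equal $\rho_1 = e_1$ (a single generator), the tree hanging off the core is ``flat'': every non-core vertex is joined to $v_0$ by a single edge of length $e_1$, and there are no further vertices beyond that layer. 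Each such outer vertex $v$, being genus $0$, must have valence $\geq 3$ by stability, hence carries at least two legs (one edge goes to the core). So $\Gamma$ is: a central genus-one vertex, some number of legs attached directly to it, and some number of ``petal'' vertices each carrying $\geq 2$ legs, each joined to the center by a length-$e_1$ edge.

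Next I would compute $\Part(\rho_1)$ for such a curve. Following Definition \ref{def:partition_type}, subdividing at $\lambda = \rho_1$ and deleting $\lambda < \rho_1$ removes the interior of the core vertex's edges and the core vertex itself, leaving the petal vertices (each with its legs) as connected components, plus one component for each leg attached directly to $v_0$ (subdivided at its midpoint, the portion with $\lambda \geq \rho_1$ is a single leg). Hence $\Part(\rho_1)$ is the partition whose blocks are: one singleton for each leg at the center, and one block $=\{\text{markings on that petal}\}$ for each petal. Conversely, given any partition $P$ of $\{1,\dots,n\}$ that is not discrete, I would build $\Gamma_P$: take a genus-one center vertex; for each singleton block of $P$ attach that leg directly to the center; for each non-singleton block attach a new genus-$0$ vertex to the center by an edge of length $e_1$ and attach that block's legs to it. Non-discreteness guarantees there is at least one non-singleton block, hence at least one actual edge, so $P_\Gamma$ is freely generated by the single element $e_1$ and the construction genuinely yields a basic radially aligned $1$-layer tree; each petal has valence $\geq 3$, so $\Gamma_P$ is stable.

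Then I would check the two composites are identities. Starting from $P$, building $\Gamma_P$, and reading off $\Part(\rho_1)$ recovers $P$ by the computation above. Starting from a $1$-layer tree $\Gamma$, the structural description shows $\Gamma$ is determined by the set of legs at its center and the partition of the remaining legs into petals — exactly the data of $\Part(\rho_1)$ — so $\Gamma \cong \Gamma_{\Part(\rho_1)}$; here I would invoke Lemma \ref{lem:basic_tropical_auts} to note that the relevant isomorphism is unique (or at worst unique up to the harmless loop-reversal on the core), so the correspondence is genuinely well-defined on isomorphism classes. Finally I should confirm non-discreteness is exactly the right exclusion: the discrete partition would force $\Gamma$ to have no petals, i.e., the core vertex with $n$ legs and $k = 0$ nonzero radii — not a $1$-layer tree — while every other partition arises, matching the requirement in the definition of $\mathfrak{Q}_n$ that $Q$ omit the discrete partition.

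I expect the main obstacle to be the bookkeeping in the structural description of $1$-layer trees: specifically, arguing carefully from the ``basic'' hypothesis (that $P_\Gamma$ is \emph{freely} generated by $e_1,\dots,e_k,\delta_1,\dots,\delta_l$, here with $k=1$ and no $\delta$'s) together with stability that there cannot be additional vertices or longer chains hanging off the core — i.e., that ``one nonzero radius'' really does force the flat one-layer shape rather than, say, a vertex at distance $2e_1$ which would secretly have radius $\rho_1$ only if two edge lengths coincided, which the freeness forbids. Once that rigidity is nailed down, the bijection itself is essentially a matter of matching combinatorial data, and the compatibility with Lemma \ref{lem:basic_tropical_auts} is routine.
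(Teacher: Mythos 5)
Your proof takes essentially the same route as the paper: both directions are constructed explicitly, with $P \mapsto \Gamma(P)$ built by hanging petal vertices off a genus-one core and $\Gamma \mapsto \Part(\rho_1)$ read off from the first radius, and the two are checked to be inverses. The only superficial difference is that you attach singleton blocks directly as legs at the core, whereas the paper attaches a vertex for every block (singletons included) and then stabilizes, which of course gives the same curve; your more explicit structural rigidity argument (that the freeness of $P_\Gamma$ and $k=1$ force the flat one-layer shape) is a helpful elaboration of the paper's terse proof.

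One small slip in your justification: you claim that ``stability and the core being a single vertex forces genus $1$,'' but that is not what rules out the self-loop case. A genus-$0$ core vertex carrying a self-loop of length $\delta_1$, with one or more petals attached, is a perfectly stable basic radially aligned tropical curve with irreducible core and one nonzero radius. What actually excludes it is that such a curve is not a \emph{tree} (the underlying graph has a cycle), so it is not a $1$-layer tree in the sense the terminology intends --- and indeed the paper's construction simply begins ``start with a genus $1$ vertex,'' making the same assumption implicitly. Your conclusion about the shape of a $1$-layer tree is right, but the stability argument offered for it does not hold up.
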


\begin{proof}
Given a non-discrete partition $p$ of $\{1,\ldots, n\}$, we can construct a $1$ layer tree $\Gamma(p)$ as follows. That is, let $p = \{p_1, \ldots, p_r\}$. Start with a genus $1$ vertex $v$ then for each $1 \leq i \leq r$ attach a vertex $v_i$ to $v$ such that i) $v_i$ is distance $\rho_1$ from $v$ and ii) the elements of $p_i$ are precisely the legs attached to $v_i$. After stabilizing, we obtain $\Gamma(p)$.

If $\Gamma$ is a $1$ layer tree, then $\Gamma \mapsto \Part(\rho_1)$ gives a map from $1$ layer trees to non-discrete partitions. The maps $p \mapsto \Gamma(p)$ and $\Gamma \mapsto \Part(\rho_1)$ are inverses. 
\end{proof}

\begin{definition}
Let $\Gamma$  be a radially aligned tropical curve with monoid
\[ 
\mathbb{N}e_1 \oplus \cdots  \oplus \mathbb{N} e_k \oplus \mathbb{N} \delta_1 \oplus \cdots \oplus \mathbb{N} \delta_l
\]
and let $\Gamma_{e_i}$ denote the tropical curve corresponding the monoid map $P \to \mathbb{N}$ taking $e_i \mapsto 1$, $e_j \mapsto 0$ for all $j \neq i$, and $\delta_j \mapsto 0$ for all $j$. We define $\Gamma_{\delta_i}$ similarly; it is the tropical curve corresponding to the monoid map $P \to \mathbb{N}$ taking $\delta_i \mapsto 1$ and all other generators to $0$. 
\end{definition}

Consider the map $\alpha : \mathfrak{R}_n^{\mathrm{uni}} \to \mathfrak{Q}_n$ defined by
\[
  (\rho_\Gamma)_{\Gamma} \mapsto \{\Part(\Gamma): \Gamma \text{ is a 1-layer tree and } \rho_{\Gamma} > 0\}.
\]
Given a collection of partitions $Q$, we obtain an assignment of radii to radially aligned curves by assigning the radius $\rho_r$ to $\Gamma$ if $r$ is the largest number such that $\Part(\rho_r) \in Q$. This gives a map $\beta : \mathfrak{Q}_n \to \mathfrak{R}_n^{\mathrm{uni}}$.  

\begin{proposition}
The maps $\alpha: \mathfrak{R}_n^{\mathrm{uni}} \to \mathfrak{Q}_n$ and $\beta : \mathfrak{Q}_n \to \mathfrak{R}_n^{\mathrm{uni}}$ are well-defined and are inverses.
\end{proposition}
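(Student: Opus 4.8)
The plan is to verify four things in sequence: that $\beta$ lands in $\mathfrak{R}_n^{\mathrm{uni}}$, that $\alpha$ lands in $\mathfrak{Q}_n$, that $\alpha \circ \beta = \mathrm{id}$, and that $\beta \circ \alpha = \mathrm{id}$. The bulk of the work is in the first point. Given $Q \in \mathfrak{Q}_n$, one must check that the assignment $\Gamma \mapsto \rho_\Gamma$ described by $\beta$ actually satisfies conditions (i) and (ii) of Definition \ref{def:univ_radius}. Condition (i) is immediate: $\beta$ assigns to $\Gamma$ one of the values $\rho_r$ in its list of radii (or $0$, which equals $\lambda(v)$ for any core vertex $v$), so $\rho_\Gamma = \lambda(v)$ for some vertex. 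For condition (ii) — compatibility with face contractions $\pi : \Gamma \to \Gamma'$ — I would argue as follows. A face contraction sets some subset of the generators $e_i, \delta_j$ to zero. Setting a core edge length $\delta_j$ to zero does not change the list of radii or the partitions $\Part(\rho_r)$ (it only alters the core internally), so $\rho_{\Gamma'} = \rho_\Gamma$ is forced, matching $\pi^*\rho_\Gamma$. Setting some $e_i$ to zero merges consecutive radii; by Lemma \ref{lem:partitions_increase_strictly} the partitions $\Part(\rho_1) \prec \cdots \prec \Part(\rho_k)$ form a strict chain, and since $Q$ is downward closed, the set of indices $r$ with $\Part(\rho_r) \in Q$ is an initial segment $\{1, \ldots, R\}$. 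The largest surviving radius in $Q$ on $\Gamma'$ is then exactly the image of $\rho_R$, i.e. $\pi^*\rho_\Gamma$. This is where one needs the downward-closedness of $Q$ in an essential way, and it is the main obstacle: one has to be careful that ``the largest $r$ with $\Part(\rho_r) \in Q$'' behaves correctly when radii collapse, and that when \emph{all} radii in $Q$ collapse to zero the answer is still $0$ (here condition (2) defining $\mathfrak{Q}_n$, that the discrete partition is excluded, guarantees the top partition of a tree with all markings separated is never in $Q$, but more relevantly, collapsing to a $0$-layer tree leaves only the empty chain and $\rho_{\Gamma'} = 0$).

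Next I would check $\alpha$ is well-defined, i.e. that $\{\Part(\Gamma) : \Gamma \text{ a 1-layer tree}, \rho_\Gamma > 0\}$ lies in $\mathfrak{Q}_n$. Downward-closedness: if $p' \preceq p$ with $p = \Part(\Gamma(p))$ in the set, I must show $p' = \Part(\Gamma(p'))$ is too. Here I would use condition (ii) for the universal radius $(\rho_\Gamma)$: build a $2$-layer tree $\Gamma$ with $\Part(\rho_1) = p'$ and $\Part(\rho_2) = p$ (possible since $p' \prec p$, appealing to the construction in the proof that 1-layer trees biject with non-discrete partitions, extended a layer), so that $\Gamma$ face-contracts onto both $\Gamma(p)$ and $\Gamma(p')$. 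Compatibility forces $\rho_{\Gamma(p)} = \pi^*\rho_\Gamma$ and $\rho_{\Gamma(p')} = (\pi')^*\rho_\Gamma$; one then checks that if the outer radius survives on $\Gamma(p)$ then the outer radius $\rho_2$ of $\Gamma$ is nonzero, hence so is $\rho_{\Gamma(p')}$ (the inner radius, which cannot be dominated). The exclusion of the discrete partition (condition (2) of $\mathfrak{Q}_n$) follows because a 1-layer tree whose partition is discrete is unstable — each $v_i$ would carry a single marking and be bivalent — so it is not among the basic radially aligned curves, and its partition never appears.

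For $\alpha \circ \beta = \mathrm{id}$: starting from $Q$, the curve $\beta(Q)$ assigns to a 1-layer tree $\Gamma(p)$ (with single radius $\rho_1$, $\Part(\rho_1) = p$) the radius $\rho_1$ iff $p \in Q$, and $0$ otherwise. Hence $\alpha(\beta(Q)) = \{p : p \in Q\} = Q$. For $\beta \circ \alpha = \mathrm{id}$: given a universal radius $(\rho_\Gamma)$, set $Q = \alpha((\rho_\Gamma))$, and let $\Gamma$ be any basic radially aligned curve with radii $0 < \rho_1 < \cdots < \rho_k$. I must show $\rho_\Gamma$ equals $\rho_R$ where $R$ is the largest index with $\Part(\rho_R) \in Q$ (or $0$ if no such index). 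The key is that for each $i$, face-contracting $\Gamma$ down to the 1-layer tree realizing $\Part(\rho_i)$ — concretely, collapsing the core and all but the $i$-th radial layer appropriately — carries $\rho_\Gamma$ to $\rho_{\Gamma(\Part(\rho_i))}$ by condition (ii); tracking which layer $\rho_\Gamma$ sits in under these contractions pins down $\rho_\Gamma = \rho_R$. The one subtlety is handling trees with reducible core: one first reduces to the irreducible-core case by a face contraction collapsing core edges (which, as noted, changes nothing about radii or their partitions), so it suffices to treat $k$-layer trees, and there condition (ii) applied to the sequence of layer-collapsing face contractions does the job.

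Overall the argument is almost entirely bookkeeping with the partial order; the one genuinely load-bearing input is Lemma \ref{lem:partitions_increase_strictly} (so that ``the $\Part(\rho_i)$ in $Q$'' form an initial segment, using that $Q$ is downward closed), together with the observation that face contractions of basic radially aligned curves are exactly coordinate projections of the monoids. I expect the main obstacle to be stating cleanly the face contraction from an arbitrary $k$-layer tree down to the 1-layer tree of a prescribed $\Part(\rho_i)$ and checking it is genuinely a morphism of tropical curves in the sense of the earlier definition, but this is routine given the explicit combinatorial description of basic radially aligned curves.
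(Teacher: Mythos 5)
Your proposal is correct and takes essentially the same route as the paper: downward-closedness of $\alpha$'s image via a two-layer test tree contracting to both $\Gamma(p)$ and $\Gamma(p')$, well-definedness of $\beta$ via Lemma \ref{lem:partitions_increase_strictly} together with the fact that face contractions of basic radially aligned curves are coordinate projections (so the indices with $\Part(\rho_r) \in Q$ form an initial segment), and determination of a universal radius by its restrictions to 1-layer trees for the inverse relations. The paper phrases the $\alpha$ step as a proof by contradiction and organizes $\beta$'s compatibility via the one-dimensional quotients $\Gamma_{e_i}$, $\Gamma_{\delta_j}$, but these are presentational differences rather than a different argument.
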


\begin{proof}
To show $\alpha$ is well-defined, it suffices to show that its image is contained in $\mathfrak{Q}_n$. We do this by contradiction. Suppose that $Q = \alpha((\rho_{\Gamma})_{\Gamma})$ is not downward closed. We show that $(\rho_{\Gamma})_{\Gamma}$ is not universal. We can find some $P \in Q$ such that a minimal coarsening of $P$ is not in $Q$. Specifically, there will be a $P = \{p_1,...,p_k\} \in Q$ such that (up to reordering) $P' = \{p_1 \cup p_2,p_3,...,p_k\} \not\in Q$, as otherwise $Q$ will be downward closed. Let the 1-layer trees $\Gamma$ and $\Gamma'$ correspond to the partitions $P$ and the coarsened partition $P'$, respectively. Say $\Gamma$ has edge length $r$ and radius $r$ and $\Gamma'$ has edge length $s$ and radius 0. There is a 2-layer curve, say $\Tilde{\Gamma}$, that contracts to both $\Gamma$ and $\Gamma'$, and has edge lengths $s$ and $r$ and radius $s+r$ (see Figure \ref{fig:2_layer_construction}). If the radius was universal, then we see that by contracting the edge of length $r$, $\Gamma'$ must have a radius of $s$, not 0. Thus the radius is not universal, as claimed. 

\begin{figure}
    \centering
    \includegraphics[width=4in]{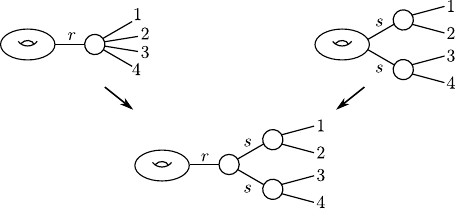}
    \caption{The tropical curves associated to the partition types $P$, $P'$, and $P' \prec P$ where $P = \{\{1,2\},\{3,4\}\}$, $P' = \{\{1,2,3,4\}\}$.}
    \label{fig:2_layer_construction}
\end{figure}

We now show that $\beta$ is well defined. First, note that given a basic radially aligned curve $\Gamma$, $\rho_{\Gamma}$ will be determined by the contractions to $\Gamma_{\delta_i}$ and $\Gamma_{e_i}$. To see this, note that the contraction $\Gamma_{\delta_i} \to \Gamma$ will send $\rho_j$ to 0 for all $j$, and the contraction $\Gamma_{e_i} \to \Gamma$ will send $\rho_j$ to $e_i$ if $j \geq i$ and 0 if $j < i$. As these maps arise from projections from a product, $\rho_{\Gamma}$ is uniquely determined by these contractions. Now pick $Q \in \mathfrak{Q}_n$. For any radially aligned $\Gamma$, Lemma \ref{lem:partitions_increase_strictly} and the fact that $\rho_{\Gamma}$ is determined by the contractions $\Gamma_{e_i} \to \Gamma$ imply that $\rho_{\Gamma}$ is actually a distance to the core. To see that this is universal, we need only check that single edge contractions are compatible, i.e., if $\Gamma$ has edge lengths $\{e_1,...,e_n\}$, then the contraction $\Gamma' \to \Gamma$ from sending $e_j$ to 0, where $\Gamma'$ has edge lengths $\{e_1,...,\hat{e_j},...,e_n\}$, is compatible. This compatibility follows immediately from contracting both $\Gamma$ and $\Gamma'$ to each of the $\Gamma_{e_i}$'s.

Finally, note that $\alpha$ is injective because the assignment of a radius to a radially aligned curve $\Gamma$ is uniquely determined by 1-layer trees. Furthermore, the discussion at the start of the previous paragraph shows that if $Q$ is the collection of partitions corresponding to the $1$-layer trees with non-zero radii in a universal radius, then $\rho_r$ is the radius determined by contractions to $1$-layer trees. This shows that $\beta = \alpha^{-1}$.
\end{proof}

\begin{theorem} \label{thm:contraction_to_Q}
For each $Q \in \mathfrak{Q}_n$, there is a diagram of stacks
\[
  \xymatrix{
     & \ol{\mathcal{M}}_{1,n}^{rad} \ar[dl] \ar[dr] & \\
    \ol{\mathcal{M}}_{1,n}  & & \ol{\mathcal{M}}_{1,n}(Q)
  }
\]
so that both arrows are proper and restrict to an isomorphism on $\mathcal{M}_{1,n}$.
\end{theorem}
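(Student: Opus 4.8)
The plan is to take the left-hand arrow to be the morphism $\ol{\mathcal{M}}_{1,n}^{rad} \to \ol{\mathcal{M}}_{1,n}$ of Theorem \ref{thm:M_1n_rad_exists}, which is proper (being induced by a logarithmic blow-up) and an isomorphism over $\mathcal{M}_{1,n}$, and to produce the right-hand arrow by contracting the universal radially aligned curve along the universal radius $\beta(Q)$ attached to $Q$. By the preceding proposition $\beta(Q) = (\rho_\Gamma)$ is a universal radius; in particular it is a face-contraction-compatible system of distances to the core, so each $\rho_\Gamma$ has no contribution from the core, and by Lemma \ref{lem:characteristic_sections_as_tropical_lengths} the system corresponds to a section $\rho$ of the characteristic sheaf of $\ol{\mathcal{M}}_{1,n}^{rad}$; moreover, since each $\rho_\Gamma$ is the value of $\lambda$ at a vertex, $\rho$ is a radius at every geometric point. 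Feeding $\rho$ and the universal curve into Theorem \ref{thm:contraction} produces a flat proper family $\ol{\pi}\colon \ol{\mathcal{C}} \to \ol{\mathcal{M}}_{1,n}^{rad}$ of Gorenstein curves of arithmetic genus one. The marked sections $\sigma_i$ lift through the blow-up $\phi$ (they avoid its centre, which lies where $\lambda = \rho$ in the interior of the fibres) and push forward through $\tau$ (they avoid the contracted locus $\{\lambda < \rho\}$) to sections $\ol\sigma_i$ of $\ol{\mathcal{C}}$; these remain disjoint, since when markings $p_a, p_b$ sit on components with $\lambda < \rho$ their legs are subdivided separately at $\lambda = \rho$ and wind up on distinct rational tails of the new singularity. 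Because the formation of $\ol{\mathcal{C}}$ commutes with base change, it now suffices to check that each geometric fibre $(\ol C_s; \ol\sigma_1, \dots, \ol\sigma_n)$ is $Q$-stable; this produces the morphism $\ol{\mathcal{M}}_{1,n}^{rad} \to \ol{\mathcal{M}}_{1,n}(Q)$ and with it the diagram.

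The substance of the argument is this fibrewise verification. Write $C_s$ for the radially aligned curve over the chosen geometric point, with distinct radii $0 < \rho_1 < \dots < \rho_k$, so that $\ol C_s$ is the contraction of $C_s$ along $\{\lambda < \rho_r\}$, where $r$ is, by construction of $\beta$, the largest index with $\Part(\rho_r) \in Q$ (and $r = 0$, $\ol C_s = C_s$, if no radius of $C_s$ lies in $Q$). Condition (i) of $Q$-stability follows from Theorem \ref{thm:contraction}(ii) together with connectedness of the image and disjointness of the $\ol\sigma_i$. For condition (iii), the only elliptic Gorenstein singularity of $\ol C_s$ is the one $\tau$ creates (present exactly when $r \geq 1$), and Theorem \ref{thm:contraction}(iii) computes its level as $\Part(\rho|_s) = \Part(\rho_r) \in Q$. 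For condition (ii), I would first show that every genus one subcurve $Z$ of $\ol C_s$ contains the subcurve $Z_{\min}$, the $\tau$-image of $\{\lambda \leq \rho_r\} \subseteq C_s$ (the core, when $r = 0$): a connected subcurve meeting the elliptic Gorenstein singularity in strictly fewer than all of its branches has arithmetic genus zero, so a genus one $Z$ must contain all of the branches, hence all of $Z_{\min}$. Comparing the definition of $\Part(\rho_{r+1})$ with the complement $(\ol C_s - Z_{\min}) \cup \Sigma$ then gives $\lev(Z_{\min}) = \Part(\rho_{r+1})$ when $r < k$, whereas $\lev(Z_{\min})$ is the discrete partition when $r = k$ (in particular when no contraction occurs). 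By Lemma \ref{lem:partitions_increase_strictly} the partitions $\Part(\rho_i)$ strictly increase in the refinement order, and $Q$ is downward closed, so $\Part(\rho_{r+1}) \notin Q$; the discrete partition never lies in $Q$; and since $\lev(Z) \succeq \lev(Z_{\min})$ whenever $Z \supseteq Z_{\min}$ (the remark following the definition of level) and $\PPart(n) - Q$ is closed under refinement, we get $\lev(Z) \notin Q$ for every genus one subcurve $Z$. Finally, condition (iv) I would check by a local computation of $H^0(\Omega^\vee(-\Sigma))$ at the elliptic Gorenstein singularity and along the rational tails, using that $C_s$ is Deligne--Mumford stable.

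It remains to address properness of the right-hand arrow and its restriction to $\mathcal{M}_{1,n}$. Since $\ol{\mathcal{M}}_{1,n}^{rad}$ is a logarithmic blow-up of the proper stack $\ol{\mathcal{M}}_{1,n}$, it is proper over $\Z[1/6]$; and $\ol{\mathcal{M}}_{1,n}(Q)$, being the open substack of $\mathcal{U}_{1,n}$ defined by the open conditions (ii) and (iii), is a locally finite type separated algebraic stack over $\Z[1/6]$ (separatedness being a uniqueness-of-$Q$-stable-limit statement, to be verified independently), so any morphism to it from a proper stack over $\Z[1/6]$ is proper. Over $\mathcal{M}_{1,n}$ the radially aligned curves are smooth, so $\lambda \equiv 0$, whence $\rho = 0$ and $\tau$ contracts nothing; thus $\ol{\mathcal{C}}$ restricts to the universal smooth curve and the right-hand arrow restricts to the canonical open immersion $\mathcal{M}_{1,n} \hookrightarrow \ol{\mathcal{M}}_{1,n}(Q)$, whose preimage is precisely $\mathcal{M}_{1,n} \subseteq \ol{\mathcal{M}}_{1,n}^{rad}$ because $\ol C_s$ is smooth exactly when $C_s$ is. I expect the main obstacle to be the combinatorics of condition (ii): matching the levels of subcurves of the contracted curve $\ol C_s$ with the partition type of $C_s$, which requires carefully tracking how legs are subdivided at the various radii and how partial normalization at an elliptic Gorenstein singularity interacts with passing to a subcurve.
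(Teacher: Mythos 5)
Your proposal is correct and takes essentially the same route as the paper: take the left arrow from Theorem \ref{thm:M_1n_rad_exists}, convert $\beta(Q)$ into a section $\rho$ of the characteristic sheaf of $\ol{\mathcal{M}}_{1,n}^{rad}$ via Lemma \ref{lem:characteristic_sections_as_tropical_lengths}, and feed $\rho$ into Theorem \ref{thm:contraction} to obtain the right arrow. The paper's own proof is considerably terser—it simply asserts that Theorem \ref{thm:contraction} yields a $Q$-stable family and that the maps are isomorphisms over $\mathcal{M}_{1,n}$—so your fibrewise verification of conditions (i)–(iv), including the identification $\lev(Z_{\min}) = \Part(\rho_{r+1})$ and the treatment of markings through the subdivision and contraction, supplies the details the paper leaves to the reader.
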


\begin{proof}
Theorem \ref{thm:M_1n_rad_exists} gives us the arrow on the left; we only have to show that the arrow on the right has the claimed properties.

Let $(\rho_\Gamma) = \beta(Q)$ be the universal radius associated to $Q$. Note that for all $\Gamma$, $\rho_\Gamma$ is a radius of $\Gamma$ and $\Part(\rho_\Gamma) \in Q$, by construction.
By Lemma \ref{lem:characteristic_sections_as_tropical_lengths}, the $\rho_\Gamma$'s define a global section $\rho$ of the characteristic sheaf
of $\ol{\mathcal{M}}_{1,n}^{rad}$. Let $\pi : C \to \ol{\mathcal{M}}^{rad}_{1,n}$ be the universal curve. Theorem \ref{thm:contraction}
constructs a $Q$-stable family of curves $\ol{\pi} : \ol{C} \to \ol{\mathcal{M}}^{rad}_{1,n}$ associated to $\rho$, inducing the map $\ol{\mathcal{M}}^{rad}_{1,n} \to \ol{\mathcal{M}}_{1,n}(Q)$. This map restricts to an isomorphism on $\mathcal{M}_{1,n}$ as the maps $C \leftarrow \tilde{C} \rightarrow \ol{C}$ of Theorem \ref{thm:contraction} are isomorphisms where $\rho$ restricts to $0$.
\end{proof}

\section{Construction of the $Q$-stable moduli spaces}

\begin{theorem} \label{thm:construction}
For each $Q \in \mathfrak{Q}_n$, $\ol{\mathcal{M}}_{1,n}(Q)$ is a proper irreducible Deligne-Mumford stack over $\Z[1/6]$ containing $\mathcal{M}_{1,n}$.
\end{theorem}

Our argument is brief since we may reuse much of the proof for the analogous result for $m$-stable spaces in 
\cite[Theorem 3.8]{smyth_mstable}. In particular, to show that we have a Deligne-Mumford stack, it is enough to show that the moduli
functor $\ol{\mathcal{M}}_{1,n}(Q)$ is deformation open, bounded, and satisfies the valuative criterion for properness.
Boundedness is immediate from \cite[Lemma 3.9]{smyth_mstable}, since every $Q$-stable curve is $m$-stable for some $m < n$.
It is clear that $\ol{\mathcal{M}}_{1,n}(Q)$ contains $\mathcal{M}_{1,n}$.
The resulting stack is therefore irreducible, since every $Q$-stable curve is $m$-stable for some $m$, and all $m$-stable curves are limits of curves in $\mathcal{M}_{1,n}$.

\begin{theorem}[Deformation-openness] \label{thm:Q_deformation_open}
Let $S$ be a noetherian scheme and let $\pi: C \to S$ be a flat, projective morphism with one-dimensional fibers and let $\sigma_1, \ldots, \sigma_n$ be $n$ sections.
Then the set 
\[
  T = \{ s \in S \mid (\pi|_{\ol{s}} : C|_{\ol{s}} \to \ol{s}, \{ \sigma_i(\ol{s}) \}_{i = 1}^n) \text{ is $Q$-stable} \}
\]
is open.
\end{theorem}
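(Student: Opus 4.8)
The plan is to verify deformation-openness by checking the four defining conditions of $Q$-stability separately, each of which is either a closed or an open condition on the base, and then combining them. First I would recall that the locus where the fibers are geometrically connected, reduced, of pure dimension one, and have arithmetic genus one is constructible, and in fact the conditions ``reduced'' and ``Gorenstein'' are open on the base after restricting to the flat locus (Gorenstein-ness of fibers is open by standard results on the relative dualizing complex, e.g. via the local criterion for the relative dualizing sheaf being a line bundle, as used in \cite[Theorem 3.8]{smyth_mstable}); likewise $p_a(C_s) = 1$ is locally constant on the reduced locus by flatness and semicontinuity of $h^0(\mathscr{O})$ combined with constancy of Euler characteristic. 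So after passing to an open subscheme of $S$ we may assume every fiber is already a connected reduced Gorenstein curve of arithmetic genus one with the $\sigma_i$ disjoint sections, and the remaining task is to show that the three ``level'' conditions (ii), (iii), (iv) of the definition of $Q$-stable, plus the condition $H^0(C_s,\Omega_{C_s}^\vee(-\Sigma))=0$, cut out an open subset.

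The condition (iv), $H^0(C,\Omega^\vee_C(-\Sigma)) = 0$ on fibers, is handled by the same argument as in Smyth: $\Omega^\vee_C(-\Sigma)$ is a coherent sheaf flat over the (now reduced-Gorenstein) base, so by semicontinuity the locus where $h^0$ of its fiber restriction vanishes is open. The two level conditions are more combinatorial. The key geometric input is that for a connected reduced Gorenstein genus-one curve the structure is rigid: there is a unique elliptic Gorenstein singularity or a unique minimal genus-one subcurve (the ``core''), and passing to nearby fibers can only make the core more degenerate in a controlled way. The cleanest route is to work \'etale-locally and use a simultaneous normalization / deformation argument: near a point $s_0\in T$, after an \'etale base change, the minimal genus-one subcurve $Z_{s_0}$ (or the elliptic Gorenstein singularity) deforms, and the level $\lev(Z_s)$ or $\lev(q_s)$ of the nearby fiber is obtained from that of $s_0$ by a refinement (resp.\ the singularity can only persist or partially smooth, merging branches). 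One then checks: if a genus-one subcurve $Z_s$ of a nearby fiber had $\lev(Z_s)\in Q$, then by downward-closedness of $Q$ and the fact that the corresponding subcurve of $C_{s_0}$ has level $\preceq \lev(Z_s)$ (using the specialization remark after the definition of level), $C_{s_0}$ would already violate (ii) — contradiction; similarly for (iii), a nearby elliptic Gorenstein singularity specializes to one on $C_{s_0}$ with coarser (or equal) level, and again downward-closedness of $Q$ gives the contradiction. This shows $T$ contains an \'etale (hence Zariski) neighborhood of each of its points.

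The step I expect to be the main obstacle is making precise the semicontinuity of level under specialization in families — i.e., controlling exactly how the core and the elliptic Gorenstein singularity of a genus-one Gorenstein curve behave in a flat family. The honest way to do this is probably to pull back to $\ol{\mathcal{M}}_{1,n}^{rad}$, or at least to a nodal model, via a base change: by the stable reduction / semistable reduction picture one can after \'etale localization and blowup find a family of radially aligned (in particular nodal) curves mapping to $C\to S$ as in Theorem \ref{thm:contraction}, translate the level conditions into conditions on the tropicalization (levels of subcurves and singularities become $\Part(\rho)$ and $\lev(Z)$ computed from the dual graph), and then invoke the fact that tropicalization is compatible with generization (Theorem \ref{thm:uniform_charts}): face contractions only coarsen the relevant partitions. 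Downward-closedness of $Q$ then does the rest uniformly. Alternatively — and this is the most economical option — one can simply cite \cite[Theorem 3.8, Proposition 3.6]{smyth_mstable} for the structural facts about genus-one Gorenstein curves in families (boundedness, openness of the Gorenstein and genus conditions, semicontinuity of the combinatorial type of the singularity) and only supply the short additional argument that, $Q$ being downward-closed, the two inclusion/exclusion level conditions are each open given that the underlying ``combinatorial type'' is upper semicontinuous; this is the version I would write up.
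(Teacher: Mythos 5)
Your proposal follows the paper's proof in all essentials: reduce to the level conditions via the openness results of \cite[Lemma 3.10]{smyth_mstable}, establish constructibility via constancy on combinatorial types, and check stability under generization using downward-closedness of $Q$ together with the two semicontinuity facts — the minimal genus-one subcurve of the special fiber sits inside the limit of any genus-one subcurve of the generic fiber, so subcurve levels can only refine under generization, while branches of an elliptic Gorenstein singularity can only merge under generization, so singularity levels can only coarsen. The one thing to fix in a write-up is that these two semicontinuities point in \emph{opposite} directions (which is exactly why one condition is an exclusion from the downward-closed set $Q$ and the other an inclusion): your phrase ``specializes to one on $C_{s_0}$ with coarser (or equal) level'' reads backwards as written — it is the \emph{generic} singularity whose level is the coarsening of the special one's.
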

\begin{proof}
As in \cite[Lemma 3.10]{smyth_mstable}, we may assume that the geometric fibers $C_{\ol{s}}$ of $\pi$ are reduced, connected, of arithmetic genus one, with only Gorenstein singularities, and that $H^0(C_s, \Omega^\vee_C(-\Sigma)) = 0$ since these are open conditions.

It remains to show that the locus in $S$ over which the level conditions hold is open. Since $S$ is Noetherian, we may establish openness by showing that this locus is constructible and stable under generization.
It is constructible since satisfaction of the level conditions is constant on combinatorial types (defined slightly ahead in Definition \ref{def:combinatorial_type}) and the curves with a given combinatorial type form a locally closed subset of $S$.

So assume $S$ is the spectrum of a DVR with closed point $0 \in S$ and generic point $\eta \in S$. We must show that if $(C_{\ol{0}}, \sigma_1(\ol{0}), \ldots, \sigma_n(\ol{0}))$ satisfies the level conditions, then so does $(C_{\ol{\eta}}, \sigma_1(\ol{\eta}), \ldots, \sigma_n(\ol{\eta}))$. Replacing $S$ by a finite base change if necessary, we may assume that the irreducible components of $C_{\ol{\eta}}$ are bijection with the irreducible components of $C$. 

Since level increases with the size of a subcurve, it is enough to check the subcurve level condition on minimal genus one subcurves. Let $E_{\ol{\eta}}$ be a minimal genus one subcurve of $C_{\ol{\eta}}$. Then the limit $Z$ of $E_{\ol{\eta}}$ in $C_{\ol{0}}$ contains
the minimal genus one subcurve $E_0$ of $C_{\ol{0}}$. Now $\lev(E_{\ol{0}}) \not\in Q$ by hypothesis, $\lev(E_{\ol{0}}) \preceq \lev(Z)$, and $Q$ is downward closed, so we have $\lev(Z) \not\in Q$. Finally, taking limits of the components of
$(C_{\ol{\eta}} - E_{\ol{\eta}}) \cup \Sigma$, we see that $\lev(E_{\ol{\eta}}) = \lev(Z)$, so $\lev(E_{\ol{\eta}}) \not\in Q$ as required.

The level condition on singularities holds trivially if $C_{\ol{\eta}}$ has only nodes, so suppose that $C_{\ol{\eta}}$ has an elliptic
$l$-fold singularity $q_{\ol{\eta}}$. Since elliptic $l_1$-fold singularities generize only to $l_2$-fold singularities with $l_2 \leq l_1$ or nodes, and nodes only generize to nodes and smooth points, the limit of $q_{\ol{\eta}}$ in $C_{\ol{0}}$ must be an elliptic $m$-fold singularity $q_{\ol{0}}$
with $m \geq l$. For each $i = 1, \ldots, l$ set $Z^i_{\ol{\eta}}$ to be the union of the $i$th rational branch $B^i_{\ol{\eta}}$ of $q_{\ol{\eta}}$ with all rational tails attached to this branch. Next, set $Z^i$ and $B^i$ to be the closures of $Z^i_{\ol{\eta}}$ and $B^i_{\ol{\eta}}$ in $C$ and write $Z^i_{lim}, B^i_{lim}$ for their restrictions to $C_{\ol{0}}$. Similarly, let $B^j_{\ol{0}}$ and $Z^j_{\ol{0}}$ be respectively the $j$th branch of $q_{\ol{0}}$ and the union of $B^j_{\ol{0}}$ with its rational tails for $j = 1,\ldots, m$. Observe that the partition of $\{1, \ldots, n \}$ induced by the markings in the $Z^i_{lim}$ agrees with the level of $q_{\ol{\eta}}$, and the partition of $\{1, \ldots, n \}$ induced by the $Z^j_{\ol{0}}$ agrees with the level of $q_{\ol{0}}$.

We claim that each $Z^j_{\ol{0}}$ factors through exactly one of the $Z^i_{lim}$. Note that the limit of at most one branch $B^i_{\ol{\eta}}$ of $q_{\ol{\eta}}$ can contain $B^j_{\ol{0}}$ since $B^j_{\ol{0}}$ is irreducible and each $B^i$ is an irreducible component of $C$. Observe that each irreducible component of $C_{\ol{0}}$ is connected via a nodal path to exactly one of the branches of $q_{\ol{0}}$. The remaining components of $Z^j_{\ol{0}}$ are connected to the $B^j_{\ol{0}}$ via a nodal path, so we conclude that these belong to $Z^i_{lim}$ as well. The claim follows.

Therefore the level of $q_{\ol{\eta}}$ is a coarsening of the level of $q_{\ol{0}}$, that is, $\lev(q_{\ol{\eta}}) \prec \lev(q_{\ol{0}})$. Since $\lev(q_{\ol{0}}) \in Q$ and $Q$ is downward closed, $q_{\ol{\eta}} \in Q$ too, and we are done.
\end{proof}

\begin{theorem}
The stack $\ol{\mathcal{M}}_{1,n}(Q)$ is universally closed.
\end{theorem}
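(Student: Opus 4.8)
The plan is to verify the valuative criterion for universal closedness: given a DVR $R$ with fraction field $K$ and a $Q$-stable curve $(C_K, \sigma_1, \ldots, \sigma_n)$ over $\Spec K$, after possibly a finite extension of $R$ we must produce a $Q$-stable curve over $\Spec R$ restricting to the given one over the generic point. First I would use the properness of $\ol{\mathcal{M}}_{1,n}^{rad}$ (or, more classically, of $\ol{\mathcal{M}}_{1,n}$ combined with the fact that $\ol{\mathcal{M}}_{1,n}^{rad} \to \ol{\mathcal{M}}_{1,n}$ is a log blowup, which is proper): the curve $C_K$ lies in $\mathcal{M}_{1,n}$, so after finite base change it extends to a radially aligned log curve $\mathcal{C} \to \Spec R$ whose special fiber is a stable, radially aligned curve. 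Then I would apply Theorem~\ref{thm:contraction} with the radius $\rho = \rho_\Gamma \in \Gamma(\Spec R, \ol{M})$ coming from the universal radius $\beta(Q)$ associated to $Q$, evaluated on the tropicalization $\Gamma$ of the special fiber. Theorem~\ref{thm:contraction} produces a flat proper family $\ol{\mathcal{C}} \to \Spec R$ of Gorenstein genus one curves whose generic fiber is $C_K$ (since $\rho$ restricts to $0$ over the generic point, where $C_K$ is smooth) and whose special fiber has its genus-one locus contracted to an elliptic singularity of level $\Part(\rho_\Gamma) \in Q$.

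The remaining work is to check that the special fiber of $\ol{\mathcal{C}}$ is actually $Q$-stable, i.e., that it satisfies all four conditions in the definition. Conditions (i) — reduced, connected, Gorenstein, genus one — and (iii) — the level condition on singularities — are handed to us by Theorem~\ref{thm:contraction} together with the fact that, by construction of $\beta(Q)$, $\Part(\rho_\Gamma) \in Q$ and in fact $\rho_\Gamma$ was chosen to be the \emph{largest} radius whose partition lies in $Q$. That maximality is exactly what forces condition (ii), the level condition on subcurves: if $Z$ were a genus one subcurve of the special fiber with $\lev(Z) \in Q$, then $Z$ would pull back to a subcurve of $\widetilde{\mathcal{C}}$ lying in the region where $\lambda > \rho$, and its level would be $\Part(\rho')$ for some radius $\rho' > \rho_\Gamma$ appearing in $\Gamma$ — contradicting that $\rho_\Gamma$ is the largest radius with partition in $Q$ and that $Q$ is downward closed. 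For condition (iv), $H^0(\ol{C}, \Omega^\vee(-\Sigma)) = 0$: this is the "no infinitesimal automorphisms" condition, and I expect it follows because the contraction only decreases this space — one can compare with the stable model, where it vanishes, using the surjection $\tau$ and a cohomology computation, or cite the analogous argument for $m$-stable curves in \cite[Theorem 3.8]{smyth_mstable} since the contracted special fiber is $m$-stable for $m = |\Part(\rho_\Gamma)|$.

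Finally I would note that uniqueness of the limit is \emph{not} required for universal closedness (only for separatedness, which is presumably addressed elsewhere), so I can stop once the $Q$-stable extension is produced. Since Theorem~\ref{thm:contraction} asserts that formation of the contraction commutes with base change, the construction is compatible with further base change on $\Spec R$, which is all that is needed.

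The main obstacle I anticipate is condition (ii): one must argue carefully that no genus one subcurve of the contracted special fiber has level lying in $Q$. The subtlety is that a genus one subcurve of $\ol{C}$ need not be the image of a single "layer" of $\widetilde{\mathcal{C}}$ — it could be the image of the minimal genus one subcurve together with some rational tails — so I need to translate "subcurve of $\ol{C}$ of genus one" back through $\tau$ and $\phi$ into the tropical picture, identify which radii it sees, and invoke Lemma~\ref{lem:partitions_increase_strictly} and the maximality in the definition of $\beta(Q)$ to conclude its level is \emph{not} in $Q$. The other conditions are either immediate from Theorem~\ref{thm:contraction} or can be deferred to the cited $m$-stable arguments.
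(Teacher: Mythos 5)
Your proposal is correct and follows essentially the same route as the paper: reduce by density to a smooth generic fiber, extend to a radially aligned log curve over the DVR using properness of $\ol{\mathcal{M}}_{1,n}^{rad}$, then contract via the universal radius $\beta(Q)$. The only difference is that the paper simply pushes the extended family through the already-constructed map $\ol{\mathcal{M}}_{1,n}^{rad} \to \ol{\mathcal{M}}_{1,n}(Q)$ of Theorem \ref{thm:contraction_to_Q}, whereas you re-verify the $Q$-stability of the contracted special fiber inline; that verification is exactly what is packaged into Theorem \ref{thm:contraction_to_Q}, so citing it lets you stop earlier.
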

\begin{proof}
Since $\mathcal{M}_{1,n}$ is dense in $\ol{\mathcal{M}}_{1,n}(Q)$, it is enough to show that limits of families of smooth $n$-pointed curves
admit $Q$-stable limits. Suppose $S$ is the spectrum of a discrete valuation ring with generic point $\eta$ and $\pi_\eta : C_\eta \to \eta$ is a
smooth and proper family of $n$-pointed curves. Since $\ol{\mathcal{M}}_{1,n}^{rad}$ is proper, after replacing $S$ by a finite cover if necessary,
we may find a limit basic radially aligned log curve $\pi^{rad} : C^{rad} \to S$ extending $C_\eta \to \eta$. The image of $C^{rad} \to S$ under
the map of Theorem \ref{thm:contraction_to_Q} corresponding to $Q$ gives the required limit $Q$-stable curve.
\end{proof}

We now recall the definition of balanced subcurve \cite[Definition 2.11]{smyth_mstable}.

\begin{definition} \label{def:nodal_distance}
Given a connected nodal curve $E$ and components $F_1$, $F_2$, we say that the \emph{nodal distance} $l(F_1,F_2)$ \emph{from $F_1$ to $F_2$} is the least number of edges in a path from $F_1$ to $F_2$ in the dual graph of $E$. If $p \in E$ is a smooth point, then there is a unique irreducible
component $F$ of $E$ containing $p$, and we write $l(-,p)$
instead of $l(-,F)$.
\end{definition}

\begin{definition} \label{def:balanced}
If $(E, \{p_i\}_{i = 1}^m)$ is a semistable curve of arithmetic genus one, we say that $(E, \{p_i\}_{i=1}^m)$
is \emph{balanced} if
\[
  l(Z,p_1) = l(Z, p_2) = \cdots = l(Z,p_m),
\]
where $Z \subseteq E$ is the minimal elliptic subcurve of $E$.
\end{definition}

We remark that the nodal distance is an integer, while the distances $\lambda(v)$ defined for genus one log curves take values in the characteristic monoid of the base. The two notions are related by the fact that if
\begin{enumerate}
    \item $A$ is the spectrum of a DVR with uniformizer $t$,
    \item $S = \Spec A$ is given the log structure associated to the chart $\N \delta \to A$ sending $\delta \mapsto t$,
    \item $\pi : C \to S$ is a log curve of genus one with smooth generic fiber
    \item $C$ has regular total space
    \item $Z \subseteq C$ is the minimal genus one subcurve
\end{enumerate}
then all smoothing parameters of the nodes of $C_0$ are equal to $\delta$ and $\lambda(F) = l(Z, F)\delta$ for all irreducible components $F$ of $C$.

\begin{theorem}
The stack $\ol{\mathcal{M}}_{1,n}(Q)$ is separated.
\end{theorem}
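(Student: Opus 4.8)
\textit{Proof sketch.} The plan is to verify the valuative criterion of separatedness, in the same spirit as the proof of the corresponding statement for $m$-stable spaces in \cite[\S3]{smyth_mstable}. By density of $\mathcal{M}_{1,n}$ in $\ol{\mathcal{M}}_{1,n}(Q)$ it suffices to treat traits whose generic point lands in $\mathcal{M}_{1,n}$ (the usual reduction: uniqueness of limits of families of smooth curves). So let $R$ be a discrete valuation ring over $\Z[1/6]$ with fraction field $K$, let $C_1,C_2\to\Spec R$ be $Q$-stable curves with \emph{smooth} generic fibers, and let $\phi_\eta\colon C_{1,\eta}\xrightarrow{\sim}C_{2,\eta}$ be an isomorphism over $\Spec K$; I must extend $\phi_\eta$ to an isomorphism over $\Spec R$. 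Uniqueness of the extension is automatic: a $Q$-stable curve has no infinitesimal automorphisms, so $\Aut(C_i/R)$ is unramified and separated over $R$, and two extensions agreeing over the schematically dense $\Spec K$ coincide; for the same reason it does no harm to replace $R$ by a finite extension, since an isomorphism produced after base change then descends. (If one is wary about which precise form of the valuative criterion permits this finite base change, one can instead show the diagonal of $\ol{\mathcal{M}}_{1,n}(Q)$ is universally closed, where base change of the test trait is unconditionally allowed.)

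Next I would produce a common dominating family of nodal curves. After a finite base change, properness of $\ol{\mathcal{M}}_{1,n}^{rad}$ (Theorem~\ref{thm:M_1n_rad_exists}) together with the open immersion $\mathcal{M}_{1,n}\subseteq\ol{\mathcal{M}}_{1,n}^{rad}$ lets us extend $C_{1,\eta}$ to a radially aligned log curve $\widetilde C\to\Spec R$; over a trait radial alignment is automatic, as the edge lengths lie in $\N$, which is totally ordered. Replacing $\widetilde C$ first by a resolution of the surface singularities of its total space and then by a resolution of the indeterminacy of the two rational maps to $C_1$ and to $C_2$ induced by the identity and by $\phi_\eta$ — all carried out by blowing up closed points of the special fiber — I may assume that $\widetilde C\to\Spec R$ is a regular, still radially aligned, family of nodal curves equipped with proper birational morphisms $\mu_i\colon\widetilde C\to C_i$ over $R$ that restrict to isomorphisms on generic fibers. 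The exceptional $\PP^1$'s so introduced may be unstable, but that is harmless: the mesa formalism of \cite{bozlee_thesis} behind Theorem~\ref{thm:contraction} does not require stability, and such components are collapsed by $\mu_i$ in any event.

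The core of the argument is to show that $\mu_1$ and $\mu_2$ both realize the canonical $Q$-contraction of $\widetilde C$ from Theorems~\ref{thm:contraction} and~\ref{thm:contraction_to_Q}; granting this, writing $\tau\colon\widetilde C\to\ol C$ for that contraction, we get canonical isomorphisms $C_1\cong\ol C\cong C_2$ through which $\phi_\eta$ visibly extends. Let $\rho$ be the universal radius $\beta(Q)$ evaluated on the fiberwise tropicalizations; then $\tau$ collapses, on each fiber, exactly the locus where $\lambda<\rho$, creating there (if it is nonempty) an elliptic Gorenstein singularity of level $\Part(\rho)$. Using the local structure of Gorenstein contractions of families of nodal curves, the genus one part of the locus collapsed by $\mu_i$ is necessarily of the same shape: the fiberwise locus where $\lambda<r_i$ for some radius $r_i$ of $\widetilde C_0$ (possibly $0$). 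If $r_i>0$, then $C_{i,0}$ carries an elliptic Gorenstein singularity of level $\Part(r_i)$, so the level condition on singularities forces $\Part(r_i)\in Q$; meanwhile the minimal genus one subcurve of $C_{i,0}$ has level $\Part(r_i^+)$, where $r_i^+$ is the next radius of $\widetilde C_0$ above $r_i$ (take the discrete partition if there is none), so the level condition on subcurves forces $\Part(r_i^+)\notin Q$. Since the partition types along a radially aligned curve form a strict chain by Lemma~\ref{lem:partitions_increase_strictly} and $Q$ is downward closed, these two constraints pin $r_i$ down to be the largest radius of $\widetilde C_0$ whose partition type lies in $Q$ — which is exactly $\rho$, by the definition of $\beta$. (When $r_i=0$ only the subcurve condition applies, and it likewise forces $\rho=0$.) Hence $\mu_i$ and $\tau$ collapse the same fiberwise loci, so each factors through the other, giving the desired isomorphism $\ol C\xrightarrow{\sim}C_i$; doing this for $i=1,2$ yields $C_1\cong C_2$ over $R$ extending $\phi_\eta$.

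The step I expect to be the main obstacle is this last one — precisely controlling which subcurve $\mu_i$ contracts. Both the assertion that the collapsed genus one locus must be of the form $\{\lambda<r_i\}$, and the identifications of the level of the elliptic singularity with $\Part(r_i)$ and of the level of the minimal genus one subcurve with $\Part(r_i^+)$, rest on the local classification of Gorenstein contractions of families of nodal curves — the mesa technology of \cite{bozlee_thesis} underpinning Theorem~\ref{thm:contraction} — which has to be applied with some care in the presence of the extra rational components created while resolving indeterminacy, and one must also check that two proper birational morphisms out of $\widetilde C$ collapsing the same fiberwise loci genuinely agree. The remaining points — the reduction to smooth generic fibers and the legitimacy of finite base change — are routine and are addressed by the remarks in the first paragraph.
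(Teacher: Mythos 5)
Your proposal is correct in substance and its decisive step is the same as the paper's: dominate both families by a common regular nodal model, use Smyth's result that the collapsed loci are balanced subcurves (equivalently, sublevel sets $\{\lambda<r_i\}$ of the radius function), and then play the level condition on singularities ($\Part(r_i)\in Q$) against the level condition on subcurves ($\Part(r_i^+)\notin Q$) together with downward closure of $Q$ and the strict chain of Lemma \ref{lem:partitions_increase_strictly} to force $r_1=r_2$. The paper phrases this as a direct contradiction from a proper containment $E\subsetneq E'$ of exceptional loci, whereas you pin both radii to the value $\beta(Q)$; these are the same computation. The one genuine structural difference is your opening reduction to traits whose generic point lies in $\mathcal{M}_{1,n}$: the paper does not make this reduction and instead verifies the valuative criterion for an arbitrary pair of $Q$-stable families with isomorphic generic fibers. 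Your reduction is legitimate, but ``density of $\mathcal{M}_{1,n}$'' is not by itself a proof of it --- the unrefined valuative criterion for separatedness quantifies over all traits, including those whose generic fiber is already singular. What you need is the refined valuative criterion of separatedness over a dense open (as in the Stacks Project's refined valuative criteria), which applies here because the diagonal is quasi-compact and $\mathcal{M}_{1,n}$ is dense in $\ol{\mathcal{M}}_{1,n}(Q)$ by the irreducibility established earlier in the section (so there is no circularity); you should cite or prove that lemma explicitly. Two smaller points to tighten: resolving indeterminacy ``by blowing up closed points'' can produce non-reduced fibers if you blow up a node, so one should take a semistable model as in \cite[3.3.2]{smyth_mstable}; and the identification of the two contractions once their collapsed loci agree is handled in the paper by normality of the targets, which is the argument you flag as missing.
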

\begin{proof}
We must show that given a pair of $Q$-stable families $\pi : C \to S$ and $\pi' : C' \to S$ over the spectrum of a discrete valuation
ring with generic point $\eta$ and special point $x$ that an isomorphism $\psi : C|_\eta \to C'|_\eta$ of pointed curves extends
to all of $S$. As in \cite[3.3.2]{smyth_mstable}, we may assume that there is a flat
and proper pointed semistable nodal curve $C^{ss} \to S$ with regular total space and a diagram of pointed $S$-schemes
\[
  C \overset{\phi}{\longleftarrow} C^{ss} \overset{\phi'}{\longrightarrow} C'
\]
where $\phi$ and $\phi'$ are proper birational morphisms, and it will be enough to show that the exceptional
loci of $\phi$ and $\phi'$ coincide.

If $C|_{\ol{x}}$ or $C'|_{\ol{x}}$ is nodal, then we may argue exactly as in \cite[3.3.2]{smyth_mstable} to conclude. Therefore we may assume that $C|_{\ol{x}}$ and $C'|_{\ol{x}}$ possess elliptic Gorenstein singularities
$p$ and $p'$ respectively. Let $E = \phi^{-1}(p)$ and $E' = \phi^{-1}(p')$. As in \cite{smyth_mstable},
we know that $E$ and $E'$ are balanced, with $E$ (resp. $E'$) consisting of all components of $C^{ss}|_{\ol{x}}$
with nodal distance to the core of $C^{ss}|_{\ol{x}}$ less than $l$ (resp. less than $l'$).
Without loss of generality we may assume $E \subseteq E'$. If the containment is proper, then $\phi(E') \subseteq C|_{\ol{x}}$
is a subcurve of genus one containing $p$ and all of its branches. Examining the dual graphs of the various curves over $\ol{x}$, we
have $\lev(p') = \lev(\phi(E'))$. Then since $C$ is $Q$-stable, $\lev(\phi(E')) \not\in Q$. On the other hand $\lev(p') \in Q$, since $C'$ is $Q$-stable. This is a contradiction, so we have $E = E'$.

The remainder of the argument follows as in \cite[3.3.2]{smyth_mstable}.
\end{proof}

\section{Classification of semistable Gorenstein modular compactifications of $\mathcal{M}_{1,n}$}
\label{sec:classification}

Our goal in this section is to prove Theorem \ref{thm:classification}, classifying the modular compactifications of $\mathcal{M}_{1,n}$. To aid our classification, we introduce the notion of the combinatorial type of a curve in $\mathcal{U}_{1,n}$. This is analogous
to the dual graph of a nodal curve, with the difference that the combinatorial type also keeps track of elliptic $m$-fold singularities.

\begin{definition} \label{def:combinatorial_type}
Let $C$ be a connected, proper, reduced, 1-dimensional scheme over an algebraically closed field $k$ with (at worst) nodes and elliptic Gorenstein singularities. The \emph{combinatorial type of $C$} consists of the following data:
\begin{enumerate}
 \item A set $V$ of \emph{vertices}, equal to the set of components of $C$;
 \item A set $E$ of \emph{singularities}, equal to the set of singular points of $C$;
 \item A \emph{genus function} $g : V \cup E \to \N$ taking a component of $C$ to its genus of its normalization and taking each singularity of $C$ to its genus as a singularity.
 \item An \emph{incidence function} $i : V \times E \to \{ 0, 1 \}$ taking $(v,e) \mapsto 1$ if $e \in v$ and $0$ otherwise.
 \item A \emph{marking function} $x : V \to 2^{\{ 1,\ldots, n\}}$ taking a component $v$ to the set of indices of the markings incident to $v$.
\end{enumerate}

Two combinatorial types $\Gamma_1 = (V_1, E_1, g_1, i_1, x_1)$ and $\Gamma_2 = (V_2, E_2, g_2, i_2, x_2)$ are \emph{isomorphic} if there is a bijection $f : V_1 \cup E_1 \to V_2 \cup E_2$ so that
\begin{enumerate}
  \item $f(V_1) \subseteq V_2$ and $f(E_1) \subseteq E_2$;
  \item $x_1 = x_2 \circ f$;
  \item $g_1 = g_2 \circ f$;
  \item $i_1(v,e) = i_2(f(v), f(e))$ for all $(v,e) \in V_1 \times E_1$.
\end{enumerate}

\end{definition}

Let $\mathcal{Z}_\Gamma$ be the locus in $\mathcal{U}_{1,n}$ of curves with combinatorial type $\Gamma$. These loci have a natural structure of locally closed substack of $\mathcal{U}_{1,n}$. (This follows from the fact that the deformations of a curve preserving its singularities form a closed subspace of the full deformation space of the curve. See \cite[Lemma 2.1]{smyth_mstable2}, for example.) Note that for each $n$, there is a finite set of isomorphism classes of combinatorial types of curves in $\mathcal{U}_{1,n}$ and altogether the $\mathcal{Z}_\Gamma$'s form a stratification of $\mathcal{U}_{1,n}$ into locally closed substacks.

Let $\mathcal{M}$ be a modular compactification in our sense. Since $\mathcal{M}$ is assumed to be an open substack of $\mathcal{U}_{1,n}$, it is uniquely determined by its points. Our strategy is to show that $\mathcal{M}$ must be a union of the
$\mathcal{Z}_\Gamma$'s. Then, analyzing the possible limits of curves, we will find that the choices of combinatorial types making up $\mathcal{M}$ necessarily agree with a $Q$-stability condition.

\begin{lemma}
$\mathcal{U}_{1,n}$ is the union of the $\ol{\mathcal{M}}_{1,n}(m)$s.
\end{lemma}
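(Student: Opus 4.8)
The plan is to show that every curve $C$ representing a point of $\mathcal{U}_{1,n}$ is $m$-stable for some $m$, since the $\ol{\mathcal{M}}_{1,n}(m)$ are open substacks of $\mathcal{U}_{1,n}$ and their union therefore contributes exactly the locus of points that are $m$-stable for some $m$. So fix a curve $(C, p_1, \ldots, p_n)$ in $\mathcal{U}_{1,n}$: it is connected, reduced, Gorenstein, of arithmetic genus one, with $n$ distinct marked points and no infinitesimal automorphisms. Recall from \cite{smyth_mstable} that $m$-stability is exactly $Q$-stability for $Q = \{S \in \PPart(n) : |S| \leq m\}$, so it suffices to produce a single $m$ for which the level condition on subcurves (every genus one subcurve $Z$ has $|\lev(Z)| > m$), the level condition on singularities (every elliptic Gorenstein singularity $q$ has $|\lev(q)| \leq m$), and the condition $H^0(C, \Omega_C^\vee(-\Sigma)) = 0$ all hold. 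The last condition is part of the definition of $\mathcal{U}_{1,n}$ (it is equivalent to having no infinitesimal automorphisms fixing the markings), so it is automatic.

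The key step is the following dichotomy. The curve $C$ has a unique minimal genus one subcurve $Z_{\min}$ (its ``core''): either $Z_{\min}$ is smooth or nodal of genus one, or $Z_{\min}$ is a single point, namely an elliptic Gorenstein singularity $q$. In the first case, set $m = |\lev(Z_{\min})|$; since any genus one subcurve $Z$ contains $Z_{\min}$ we have $\lev(Z_{\min}) \preceq \lev(Z)$ and hence $|\lev(Z)| \geq m$, but we need the strict inequality for all $Z$ other than... actually we need $|\lev(Z)| > m$ to fail only when we want stability to include $Z_{\min}$ itself, so the right choice is $m = |\lev(Z_{\min})| - 1$ when $Z_{\min}$ is the core and has level of size $|\lev(Z_{\min})|$; one checks that with this $m$ every genus one subcurve $Z$ satisfies $|\lev(Z)| \geq |\lev(Z_{\min})| > m$, and there are no elliptic Gorenstein singularities to worry about, so the singularity condition is vacuous. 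In the second case, where the core is an elliptic Gorenstein singularity $q$ of some number of branches, set $m = |\lev(q)|$; then $q$ satisfies $|\lev(q)| \leq m$, any other elliptic Gorenstein singularity $q'$ would have to contain the core, forcing $q' = q$ by minimality, and any genus one subcurve $Z$ contains $q$ with all its branches so that $|\lev(Z)| \geq$ (number of branches of $q$) $= |\lev(q)| $; we need strict inequality here, which holds because a genus one subcurve containing $q$ and at least one point beyond the branches of $q$ has strictly finer level. Checking that $m < n$ throughout (so that $\ol{\mathcal{M}}_{1,n}(m)$ is one of the spaces in the statement) follows because the level of the core is never the discrete partition on a curve in $\mathcal{U}_{1,n}$, which is a consequence of the no-infinitesimal-automorphisms hypothesis.

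I expect the main obstacle to be the bookkeeping around the strict-versus-non-strict inequalities: ensuring that the $m$ chosen makes the core itself $m$-stable (so that $C$, which has the core as a genus one subcurve or a singularity, genuinely lands in $\ol{\mathcal{M}}_{1,n}(m)$) rather than sitting just outside it. This requires a clean statement of how $\lev$ behaves as one enlarges a genus one subcurve — essentially the content of the two remarks following the definition of level, together with the classification of elliptic Gorenstein singularities by branch number recalled from \cite{smyth_mstable}. A secondary point to verify is that the $H^0(C, \Omega_C^\vee(-\Sigma)) = 0$ condition appearing in the definition of $m$-stability is literally the no-infinitesimal-automorphisms condition built into $\mathcal{U}_{1,n}$, so that no curve of $\mathcal{U}_{1,n}$ is excluded for that reason; this is standard and can be cited. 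Once the dichotomy and the inequalities are pinned down, the lemma follows: $\mathcal{U}_{1,n} = \bigcup_{m} \ol{\mathcal{M}}_{1,n}(m)$ as substacks of $\mathcal{U}_{1,n}$, since each $\ol{\mathcal{M}}_{1,n}(m)$ is an open substack and we have just shown every point lies in one of them.
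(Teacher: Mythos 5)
Your overall approach is essentially the same as the paper's: reduce to showing that every geometric point of $\mathcal{U}_{1,n}$ is $m$-stable for a suitable $m$ (with $m$ the branch number of the elliptic Gorenstein singularity when one is present, and a small value such as $0$ when the core is nodal), then conclude using openness of the $\ol{\mathcal{M}}_{1,n}(m)$'s in $\mathcal{U}_{1,n}$.

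However, the justification of the strict inequality $|\lev(Z)| > |\lev(q)|$ in the singular case is not correct as stated. You assert that ``a genus one subcurve containing $q$ and at least one point beyond the branches of $q$ has strictly finer level,'' but this fails for $Z = Z_{\min}$: if each branch of $q$ were a $\mathbb{P}^1$ carrying exactly one marking and no attached tails, then $Z_{\min}$ is precisely the union of the branches and $\lev(Z_{\min}) = \lev(q)$. What rules out such a configuration is exactly the no-infinitesimal-automorphism hypothesis in the definition of $\mathcal{U}_{1,n}$: by \cite[Corollary 2.4]{smyth_mstable}, at least one branch of $q$ must carry two special points other than $q$, so deleting $Z_{\min}$ splits that branch's part of $\lev(q)$ into at least two pieces, giving $|\lev(Z_{\min})| \geq |\lev(q)| + 1$ (and then $|\lev(Z)| \geq |\lev(Z_{\min})|$ for any $Z \supseteq Z_{\min}$). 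You allude to this bookkeeping worry at the end, but the argument you actually give for strictness is wrong and must be replaced by the appeal to the automorphism hypothesis. The same hypothesis is what makes $|\lev(q)|$ equal to the branch number and what guarantees $m < n$, both of which you correctly flag but do not fully pin down. The rest of the proposal — the nodal case and the concluding appeal to openness — is fine and matches the paper's argument.
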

\begin{proof}
Suppose that $(\pi : C \to S, \sigma_1, \ldots, \sigma_n)$ is a family in $\mathcal{U}_{1,n}$. We want to show that $S$ possesses an open cover so that the restriction of $C$ to each part the cover factors through some $\ol{\mathcal{M}}_{1,n}(m)$. Let $\ol{s}$ be a geometric point of $S$. Then the fiber $C_{\ol{s}}$ of $\pi$ over $\ol{s}$ is a Gorenstein curve of genus one with $n$ distinct marked points and
no infinitesimal automorphisms. Recall that the only Gorenstein singularities of genus less than or equal to one are the elliptic Gorenstein singularities and the node. If $C_{\ol{s}}$ has an elliptic $m$-fold singularity for some $m$, then since $C$ has no infinitesimal automorphisms, the number of markings and nodes on the minimal genus one subcurve must be at least $m + 1$. It follows that $C_{\ol{s}} \in \ol{\mathcal{M}}_{1,n}(m)$. If $C_{\ol{s}}$ does not have an elliptic Gorenstein singularity, then $C_{\ol{s}} \in \ol{\mathcal{M}}_{1,n} = \ol{\mathcal{M}}_{1,n}(0)$. Since the
$\ol{\mathcal{M}}_{1,n}(m)$'s are deformation open \cite{smyth_mstable}, for each $\ol{s}$ there is an open neighborhood $U_{s}$ of the image of $\ol{s}$ in $S$ so that $C|_{U_s}$ factors through one of the stacks $\ol{\mathcal{M}}_{1,n}(m)$.
\end{proof}

\begin{lemma}
Let $\Gamma$ be a combinatorial type. Then $\mathcal{Z}_\Gamma$ is irreducible.
\end{lemma}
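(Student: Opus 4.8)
The plan is to realize $\mathcal{Z}_\Gamma$ as the image of an irreducible stack, so that irreducibility follows because the continuous image of an irreducible space is irreducible. A curve $C$ with combinatorial type $\Gamma$ is built by gluing: each vertex $v \in V$ contributes a component $C_v$, which is a smooth proper curve of genus $g(v)$ with some number of marked points (coming both from the legs/markings rooted at $v$ and from the branches of the singularities incident to $v$); each node $e \in E$ with $g(e)=0$ glues together (or self-glues) the two branches it meets; and each elliptic $m$-fold singularity $e \in E$ with $g(e) = 1$ is formed by crimping the $m$ branches incident to it into a single elliptic Gorenstein point. The moduli of the data ``$(C_v, \text{marked points})$ for each $v$'' is a product of moduli spaces $\mathcal{M}_{g(v), k_v}$ of smooth pointed curves, which is irreducible; adding the finite gluing data (matching up branches at each node, choosing isomorphisms of branches at the elliptic singularity) only introduces finite covers or $\mathbb{G}_m$-bundles/torsors, which preserve irreducibility. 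So I would exhibit a smooth surjection onto $\mathcal{Z}_\Gamma$ from an iterated fibration over $\prod_v \mathcal{M}_{g(v),k_v}$ with irreducible fibers.

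The key steps, in order: (1) Describe the ``normalization-and-marking'' functor sending a $\Gamma$-curve $C$ to the tuple of its normalized components with their induced marked points (the images of the markings $\sigma_i$ together with the preimages of the singular points); check this exhibits $\mathcal{Z}_\Gamma$ as fibered over $\prod_{v\in V}\mathcal{M}_{g(v),k_v}$, where $k_v$ counts markings at $v$ plus the number of branches-at-singularities incident to $v$. (2) Identify the fibers: given the normalized components with their marked points, reconstructing $C$ amounts to choosing, for each node, an identification of its two branch points (a discrete/finite choice once the branch points are labeled, possibly up to the residual $\mathbb{G}_m$ of automorphisms, but in any case a torsor under a connected group or a finite set that is connected after accounting for automorphisms), and for each elliptic $m$-fold singularity, performing the unique crimping construction of \cite{smyth_mstable2} — which is itself a single point or a connected torsor under $(\mathbb{G}_m)^{m-1}$ or similar. (3) Conclude that the total space mapping onto $\mathcal{Z}_\Gamma$ is an iterated bundle with irreducible base and irreducible (indeed connected, smooth) fibers, hence irreducible, and therefore so is $\mathcal{Z}_\Gamma$. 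Throughout one uses that $\mathcal{M}_{g,k}$ is irreducible (the classical Deligne–Mumford fact) and that $\mathcal{Z}_\Gamma$ is nonempty and reduced with the locally closed substack structure already granted in the text.

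The main obstacle I expect is pinning down the moduli of the crimping data at the elliptic Gorenstein singularity precisely enough to see its total space is connected — that is, making rigorous that the curves with a \emph{fixed} elliptic $m$-fold singularity and fixed normalization form a connected (irreducible) family. This is exactly the content that Smyth analyzes via the deformation theory of elliptic $m$-fold points and the crimping/attaching construction; the cleanest route is to cite the relevant structural results from \cite{smyth_mstable} and \cite{smyth_mstable2} (the description of elliptic Gorenstein singularities and of the ``crimping spaces'') so that the fiber of the normalization map over a point is identified with a connected scheme, and then the fibration argument goes through. A secondary, more bookkeeping-level issue is handling the residual automorphisms (the loop-reversal and branch-swap symmetries seen already in Lemma \ref{lem:basic_tropical_auts}) so that the finite gluing choices are taken modulo the right group; since we only need irreducibility of a stack, it suffices that the relevant group acts transitively on the connected components of the gluing-data space, which it does.
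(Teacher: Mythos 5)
Your proposal is correct in spirit and shares the overall strategy of the paper — exhibit $\mathcal{Z}_\Gamma$ as the image of an irreducible space and conclude — but you handle the key case (an elliptic $m$-fold point) by a genuinely different route. You propose to fiber $\mathcal{Z}_\Gamma$ over $\prod_v \mathcal{M}_{g(v),k_v}$ via pointed normalization and then analyze the crimping data at the singularity as the fiber, and you correctly flag the main difficulty: showing that the crimping space (modulo automorphisms of the pointed normalization) is connected. The paper sidesteps this entirely. It first reduces to the case where $\Gamma$ is a single elliptic $m$-fold point with rational branches (the general case being a product of $\mathcal{M}_{0,n}$'s with $\mathcal{Z}_{\Gamma_{\min}}$, as in your step (3)). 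For the core case it writes down an explicit affine model
\[
  A = \left\{ (f_i(t_i))_{i = 1}^m \in \prod_{i = 1}^m \Z\left[1/6,t_i\right] \,\, \middle| \,\, f_i(0) = f_j(0) \text{ for all }i, \ \sum_{i = 1}^m f_i'(0) = 0 \right\}
\]
and glues on $\Spec \Z[1/6, t_i^{-1}]$ to get a single proper curve $D \to \Spec \Z[1/6]$ whose geometric fibers realize \emph{every} minimal unmarked Gorenstein genus-one curve with an $m$-fold point — i.e.\ it uses the rigidity of the unmarked minimal $m$-fold curve directly, encoded as a single model rather than as connectivity of a crimping torsor. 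Marked points are then put on via an explicit irreducible affine base $S$, so $\mathcal{Z}_\Gamma$ is the image of the irreducible scheme $S$.

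The trade-offs: your fibration-plus-crimping argument is more modular and more likely to adapt to higher genus, where the unmarked core is no longer rigid and an explicit universal model like $D$ is unavailable; but it requires more bookkeeping (the $\mathcal{M}_{0,k_v}$'s for unstable vertices with $k_v \leq 2$ are quotient stacks with positive-dimensional automorphisms, so "fibered over a product of $\mathcal{M}_{g(v),k_v}$'s" needs care, and one must really pin down the crimping space from \cite{smyth_mstable2}). The paper's argument is shorter and avoids crimping spaces altogether, but leans on the rigidity of the $m$-fold point with rational branches, which it asserts rather than proves. Both hinge on the irreducibility of $\mathcal{M}_{g,n}$.
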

\begin{proof}
If $\Gamma$ possesses no $m$-fold points, then $\mathcal{Z}_\Gamma$ is a product of copies of $\mathcal{M}_{g,n}$'s coming from the vertices of $\Gamma$. Since each of the stacks $\mathcal{M}_{g,n}$ is geometrically irreducible, so is $\mathcal{Z}_\Gamma$.

Next, let $\Gamma$ be a combinatorial type consisting of a single elliptic $m$-fold point with $k$ rational branches $E_1, \ldots, E_k$ with $n_1, \ldots, n_k$ markings, respectively. Let $n = n_1 + \cdots + n_k$.

Let
\[
  A = \left\{ (f_i(t_i))_{i = 1}^m \in \prod_{i = 1}^m \Z\left[1/6,t_i\right] \,\, \middle| \,\, f_i(0) = f_j(0) \text{ for all }i, \text{ and } \sum_{i = 1}^m f_i'(0) = 0 \right\}
\]
This gives a standard affine model of the $m$-fold point with rational branches.
Form a proper curve $D \to \Spec \Z[1/6]$ by gluing $\Spec \Z[1/6, t_i^{-1}]$ to the $i$th branch of $\Spec A$ for each $i$. If $C$ is a minimal unmarked Gorenstein curve of genus one over an algebraically closed field with an $m$-fold point, then $C$ appears as a geometric fiber of $D$.

Now, let
\[
  S = \prod_{i = 1}^m \left( \Spec \Z[1/6, s_{i,1}, \ldots, s_{i,n_i}] - \Delta_{n_i} \right)
\]
where $\Delta_{n_i}$ is the locus where any pair of coordinates of $\mathbb{A}^{n_i}_{\Z[1/6]} = \Spec \Z[1/6, s_{i,1}, \ldots, s_{i,n_i}]$ coincide. We construct a family of pointed curves $D \times S \to S$ by taking the $j$th marking on the $i$th branch of $D$ to be located at
$t_i^{-1} = s_{i, j}$.

Now, every pointed curve of type $\Gamma$ appears as some geometric fiber of the family $D \times S \to S$. Therefore, the image of $S$ in
$\mathcal{U}_{1,n}$ under the map induced by $D \times S \to S$ is precisely $\mathcal{Z}_\Gamma$. Since $S$ is irreducible, the result follows.

Finally, consider a general $\Gamma$. Let $\Gamma_{min}$ be the combinatorial type of the minimal genus one sub-combinatorial type of $\Gamma$ with markings at the outgoing edges. Clearly, $\mathcal{Z}_{\Gamma}$ is a product of $\mathcal{M}_{0,n}$'s and $\mathcal{Z}_{\Gamma_{min}}$, all of which are already known to be geometrically irreducible, so $\mathcal{Z}_\Gamma$ is irreducible too.
\end{proof}

The following lemma is the crucial one: it reduces the classification of Gorenstein compactifications to combinatorics.

\begin{lemma} \label{lem:union_of_loci}
$\mathcal{M}$ is a union of the $\mathcal{Z}_\Gamma$'s.
\end{lemma}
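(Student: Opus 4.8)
The plan is to show that $\mathcal{M}$ is stable under generization and specialization within each stratum $\mathcal{Z}_\Gamma$; since the $\mathcal{Z}_\Gamma$ are irreducible and form a stratification, this forces $\mathcal{M}$ to be a union of strata. Concretely, I want to prove: if $(C, \sigma_1, \ldots, \sigma_n)$ is a point of $\mathcal{Z}_\Gamma$ that lies in $\mathcal{M}$, then every other point of $\mathcal{Z}_\Gamma$ also lies in $\mathcal{M}$. Because $\mathcal{M}$ is open in $\mathcal{U}_{1,n}$, membership of a point depends only on whether that point specializes/generizes into $\mathcal{M}$; and because $\mathcal{Z}_\Gamma$ is irreducible, any two of its points are connected through a common generization (the generic point of $\mathcal{Z}_\Gamma$) by chains of specialization within the closure of $\mathcal{Z}_\Gamma$. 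So it suffices to handle two things: (a) openness already tells us $\mathcal{M}$ is closed under generization inside $\mathcal{U}_{1,n}$, hence the generic point of $\mathcal{Z}_\Gamma$ lies in $\mathcal{M}$ as soon as any point of $\mathcal{Z}_\Gamma$ does; (b) we must show $\mathcal{M}$ is closed under specialization \emph{within} $\mathcal{Z}_\Gamma$, i.e. if the generic point of $\mathcal{Z}_\Gamma$ is in $\mathcal{M}$ then so is every special point of $\mathcal{Z}_\Gamma$.

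For step (b) I would use properness of $\mathcal{M}$. Given a point $x_0 \in \mathcal{Z}_\Gamma$ and the generic point $\eta \in \mathcal{Z}_\Gamma$ with $x_0 \in \overline{\{\eta\}}$, choose a trait (spectrum of a DVR) $T \to \mathcal{Z}_\Gamma$ hitting $\eta$ at the generic point and $x_0$ at the closed point — possible since $\mathcal{Z}_\Gamma$ is irreducible, using that the local ring at $x_0$ in the closure of $\mathcal{Z}_\Gamma$ has a prime specializing to the maximal ideal, and passing to a DVR dominating it (after a finite base change if needed). This gives a family $C_T \to T$ of curves all of combinatorial type $\Gamma$ on the generic point, with $C_T|_\eta \in \mathcal{M}$. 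Since $\mathcal{M}$ is universally closed (proper over $\Spec \Z[1/6]$), after a further finite base change the family $C_T|_\eta$ extends to a family $\widetilde{C}_T \to T$ with special fiber in $\mathcal{M}$. Now both $\widetilde{C}_T$ and the original $C_T$ restrict to the same curve over $\eta$; since $\mathcal{M} \subseteq \mathcal{U}_{1,n}$ is separated over $\Spec \Z[1/6]$ (it is proper, hence separated), and $\mathcal{U}_{1,n}$ parametrizes curves with no infinitesimal automorphisms so isomorphisms of fibers are rigid, the two extensions must agree: the special fiber of $C_T$ is isomorphic to the special fiber of $\widetilde{C}_T$, hence lies in $\mathcal{M}$. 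Thus $x_0 \in \mathcal{M}$.

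One technical wrinkle: to conclude that the two extensions $\widetilde{C}_T$ and $C_T$ agree I want both to be families over the \emph{same} DVR; the extension furnished by universal closedness a priori only exists after a finite base change $T' \to T$. This is harmless because the question of whether $x_0 \in \mathcal{M}$ can be checked after base change (the point of $\mathcal{Z}_\Gamma$ cut out by the closed point of $T'$ is still $x_0$, as combinatorial type is insensitive to field extension), so I would simply replace $T$ by $T'$ from the start. A second point to check is that the separatedness/uniqueness-of-limits argument applies: $\mathcal{M}$ is a substack of $\mathcal{U}_{1,n}$ and we are comparing two $S$-points of $\mathcal{M}$ agreeing over $\eta$; since $\mathcal{M}$ is proper it is separated, so the valuative criterion of separatedness gives the desired isomorphism of the two families over all of $T$, in particular over the closed point.

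I expect the main obstacle to be making step (b) fully rigorous at the level of stacks rather than schemes — in particular, cleanly producing the trait $T \to \mathcal{Z}_\Gamma$ through a prescribed generic and special point (this uses irreducibility of $\mathcal{Z}_\Gamma$ together with the fact that points of an irreducible Noetherian stack are connected by traits after finite base change), and correctly invoking separatedness of $\mathcal{M}$ in the form "two families over a DVR agreeing over the generic point are isomorphic." The curve-geometry input — that elliptic $m$-fold singularities degenerate only to elliptic $m'$-fold singularities with $m' \ge m$ or to nodes, so that combinatorial type is upper semicontinuous — is already implicit in the stratification statement preceding the lemma and in the proof of Theorem \ref{thm:Q_deformation_open}, so I would cite that rather than redo it.
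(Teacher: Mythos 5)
Your overall strategy --- reduce to connecting two points of the irreducible stratum $\mathcal{Z}_\Gamma$ by a trait, use openness of $\mathcal{M}$ for generization and properness of $\mathcal{M}$ to produce a limit, then identify that limit with the prescribed special fiber --- is essentially the paper's. The gap is in the identification step. You compare the given family $C_T$ (a family in $\mathcal{Z}_\Gamma$ whose special fiber is the point $x_0$ you are trying to place in $\mathcal{M}$) with the limit $\widetilde{C}_T$ furnished by properness of $\mathcal{M}$, and conclude they agree ``since $\mathcal{M}$ is separated.'' But the valuative criterion of separatedness for $\mathcal{M}$ only identifies two families \emph{both} of which are objects of $\mathcal{M}(T)$; the family $C_T$ is not known to lie in $\mathcal{M}(T)$ --- that is exactly what is being proved. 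Nor can you fall back on separatedness of the ambient stack $\mathcal{U}_{1,n}$: it is not separated (its open substacks $\ol{\mathcal{M}}_{1,n}(m)$ for varying $m$ are each proper and produce genuinely different limits of the same generic family, e.g.\ the nodal versus the cuspidal limit of a degenerating pointed elliptic curve). The remark about absence of infinitesimal automorphisms only rigidifies an isomorphism of the two special fibers once it exists; it does not produce one.

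The paper closes this gap with one additional idea: find a single $m$ such that both $C_T$ and $\widetilde{C}_T$ are families in $\ol{\mathcal{M}}_{1,n}(m)$, and then invoke separatedness of that already-known-to-be-separated Deligne-Mumford stack. Concretely, $\mathcal{U}_{1,n}$ is covered by the open substacks $\ol{\mathcal{M}}_{1,n}(m)$, so $\widetilde{C}_T$ lies in some $\ol{\mathcal{M}}_{1,n}(m)(T)$; its generic fiber is isomorphic to that of $C_T$ and hence has combinatorial type $\Gamma$, and since $m$-stability is a condition depending only on combinatorial type, every curve of type $\Gamma$ --- in particular every fiber of $C_T$ --- is $m$-stable. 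Both families then live in the separated stack $\ol{\mathcal{M}}_{1,n}(m)$ and agree over $\eta$, so they coincide over all of $T$. You need this (or some equivalent device for comparing a family in $\mathcal{M}$ with one not yet known to be in $\mathcal{M}$) to complete the argument; the rest of your outline, including the construction of the trait and the base-change bookkeeping, is sound.
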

\begin{proof}
It suffices to show that if $\mathcal{M}$ shares a geometric point with $\mathcal{Z}_\Gamma$ for some $\Gamma$, then $\mathcal{M}$ contains all points of $\mathcal{Z}_\Gamma$. By the previous lemma, for any pair of geometric points
$C_p \in \mathcal{Z}_\Gamma(\Spec k(p)), C_q \in \mathcal{Z}_\Gamma(\Spec k(q))$, there are families of curves
$C_S\in \mathcal{Z}_\Gamma(S), C_T \in \mathcal{Z}_\Gamma(T)$ where $S$ and $T$ are spectra of discrete valuation rings so that
\begin{enumerate}
  \item $S$ and $T$ have isomorphic geometric generic points,
  \item $C_S$ is isomorphic to $C_T$ over this common geometric generic point,
  \item There is a map $\Spec k(p) \to S$ onto the special point of $S$ along which $C_S$ pulls back to $C_p$,
  \item There is a map $\Spec k(q) \to T$ onto the special point of $T$ along which $C_T$ pulls back to $C_q$.
\end{enumerate}

We know $\mathcal{M}$ is closed under generization and has a specialization for any 1-dimensional family. It follows that if $\mathcal{M}$ contains $C_p$, then $\mathcal{M}(S)$ contains $C_S$. To conclude, we show that $\mathcal{M}(T)$
contains $C_T$ too.

Let $\eta$ be the generic point of $T$, and, replacing $T$ by a finite base change if necessary, let $C_T' \in \mathcal{M}(T)$ be the unique limit of $C_T|_\eta$ in $\mathcal{M}$. We have an isomorphism of $\eta$-schemes $C _T'|_{\eta} \cong C_T|_{\eta}$.
Since $\mathcal{U}_{1,n}$ is a union of the open substacks $\ol{\mathcal{M}}_{1,n}(m)$, there is some $m$ so that $C_T'$ lives in $\ol{\mathcal{M}}_{1,n}(m)(T)$. Considering $C_T'|_{\ol{\eta}}$, we conclude that curves of combinatorial type $\Gamma$ are $m$-stable. In particular, both $C_T$ and $C_T'$ are families in $\ol{\mathcal{M}}_{1,n}(m)$.
Since $\ol{\mathcal{M}}_{1,n}(m)$ is separated, we conclude that $C_T' \cong C_T$ over $T$, completing the proof.
\end{proof}

Our strategy now is to produce families of curves witnessing enough of the relationships between the loci $\mathcal{Z}_\Gamma$
that $\mathcal{M}$ is forced to be $Q$-stable.

\begin{definition}
Let
\[
  \mathcal{P} : P_1 \prec P_2 \prec \cdots \prec P_k
\]
be a strictly increasing chain of partitions of $\{1, \ldots, n\}$ not including the partition $\{ \{ 1 \}, \cdots, \{ n \} \}$. We say that a family of radially aligned curves $\pi : C \to S$ is a \emph{test curve of type $\mathcal{P}$ centered at a geometric point $s$ of $S$} if
\begin{enumerate}
  \item $(\pi : C \to S, s)$ satisfies the conclusions of Theorem \ref{thm:uniform_charts};
  \item The tropicalization of the central fiber $\trop(C|_s)$ has a basic radially aligned log structure;
  \item The log structure on $S$ is divisorial, that is, it is the log structure associated to a normal crossings divisor \cite[(1.5)]{kato_log_structures};
  \item The tropicalization of the central fiber has partition type $\mathcal{P}$ (Definition \ref{def:partition_type}). 
\end{enumerate}
\end{definition}

\begin{lemma} \label{lem:test_curves_exist}
For any strictly increasing chain of partitions of $\{1, \ldots, n\}$
\[
  \mathcal{P} : P_1 \prec P_2 \prec \cdots \prec P_k
\]
not including the partition $\{ \{ 1 \}, \cdots, \{ n \} \}$, there is a test curve of type $\mathcal{P}$.
\end{lemma}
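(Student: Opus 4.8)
The plan is to construct the test curve explicitly by building an appropriate tropical curve of partition type $\mathcal{P}$, realizing it as the tropicalization of a log curve, and then checking the four conditions. First I would build a basic radially aligned tropical curve $\Gamma$ of genus one whose partition type is $\mathcal{P} : P_1 \prec \cdots \prec P_k$: start with a genus one vertex (the core), then for each block $B$ of $P_k$ attach a "spine" of vertices at successive radii $\rho_1 < \cdots < \rho_k$ that refines according to where the blocks of $P_i$ split, hanging the appropriate legs off of the vertices at each level so that the component structure after truncating at $\lambda = \rho_i$ induces exactly $P_i$. Stabilizing this configuration (deleting or contracting unstable genus zero vertices) gives $\Gamma$; by construction $\Part(\rho_i) = P_i$, and the hypothesis that $P_k$ is not the discrete partition guarantees the core stays genus one rather than collapsing. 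Taking the monoid $P_\Gamma$ freely generated by $e_1 = \rho_1, e_2 = \rho_2 - \rho_1, \ldots, e_k = \rho_k - \rho_{k-1}$ together with the core edge lengths $\delta_1,\ldots,\delta_l$ makes $\Gamma$ basic radially aligned.

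Next I would realize $\Gamma$ as the tropicalization of an actual family of log curves over a suitable base $S$. The standard way: take $S = \Spec k[[x_1,\ldots,x_r]]$ (or its spectrum with the divisorial log structure from the coordinate hyperplanes), where $r = k + l$ is the number of generators of $P_\Gamma$, so $\ol{M}_{S,s} = P_\Gamma$ at the closed point $s$; then the moduli map to $\ol{\mathcal{M}}_{1,n}^{rad}$ classifying a versal deformation of the nodal curve dual to $\Gamma$, with the basic radially aligned log structure of Theorem \ref{thm:M_1n_rad_exists} pulled back, produces $\pi : C \to S$ whose central fiber tropicalizes to $\Gamma$. Condition (iii), that the log structure on $S$ is divisorial, is arranged by this choice of $S$; condition (ii), that $\trop(C|_s)$ is basic radially aligned, holds by construction of $\Gamma$; condition (iv), that the partition type is $\mathcal{P}$, was built in; and condition (i), the conclusions of Theorem \ref{thm:uniform_charts}, is obtained after possibly shrinking $S$ to an étale neighborhood of $s$ (or is automatic in the formal/henselian local picture since the uniform chart exists there).

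The main obstacle I expect is the combinatorial bookkeeping in the first step — making sure the attached trees can be stabilized so that the levels come out to be exactly $P_1 \prec \cdots \prec P_k$ and not some coarsening, and in particular handling the degenerate cases where a block of $P_i$ is a singleton (so the corresponding vertex would be unstable and must be pruned) or where consecutive partitions $P_i, P_{i+1}$ differ only by splitting off singletons. The key point making this work is Lemma \ref{lem:partitions_increase_strictly}: since the $P_i$ are \emph{strictly} increasing and $P_k \neq \{\{1\},\ldots,\{n\}\}$, at each radius there is genuine branching supported by at least two markings' worth of descendants, so stability can always be achieved by placing enough legs. Once $\Gamma$ is in hand, the passage to a log curve is routine versal-deformation theory, so I would keep that part brief and spend the write-up on exhibiting $\Gamma$.
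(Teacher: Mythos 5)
Your proposal follows the paper's proof in approach and substance: both build the degenerate curve layer by layer from the chain $\mathcal{P}$ (a component at radius $\rho_i$ for each block of $P_i$, attached to its parent block's component), stabilize, give it a log structure whose base monoid is freely generated by the radii differences $e_i$ and core edge lengths, and then produce the family $\pi : C \to S$ by pulling back the universal curve of $\ol{\mathcal{M}}_{1,n}^{rad}$.

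Two small points worth tightening. First, a base of dimension $r = k + l$ cannot carry a \emph{versal} deformation of the nodal curve when $r < n$; what you want is a slice transverse to the stratum $\mathcal{Z}_{\trop(C|_s)}$ — the paper instead takes $S$ to be an \'etale neighborhood of the image point inside an \'etale chart of $\ol{\mathcal{M}}_{1,n}^{rad}$, which automatically furnishes conditions (i) and (iii) and a divisorial log structure with $\ol{M}_{S,s} = P_\Gamma$. Second, the stabilization step you flag as delicate is exactly where the paper does the work: you cannot ``place enough legs'' since the markings are fixed, so one instead observes that for $i < k$ the strict refinement $P_i \prec P_{i+1}$ forces some level-$i$ component to have at least two children (hence at least three special points), and for $i = k$ the non-discreteness of $P_k$ forces some level-$k$ component to carry at least two markings; thus at every radius $\rho_i$ some component survives, and subdividing at $\lambda = \rho_i$ recovers the trees indexed by $P_i$, giving partition type exactly $\mathcal{P}$. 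The citation of Lemma \ref{lem:partitions_increase_strictly} is not really needed here — the strictness $P_i \prec P_{i+1}$ is a hypothesis of the statement, not a consequence to be derived.
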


\begin{proof}
Choose an algebraically closed field $\kappa$.
Pick a smooth genus one curve $E$ over $\kappa$ arbitrarily. Add a rational component $Z^{(1)}_S$ for each subset $S$ of $P_1$ and attach them nodally to $E$ at distinct smooth points. Repeat this process for each $i = 2, \ldots, k$, adding
components $Z^{(i)}_S$ for each subset $S \in P_i$, where $Z^{(i)}_S$ is nodally attached to the unique component $Z^{(i-1)}_{T}$ where $T \in P_{i - 1}$ is the subset containing $S$. Finally, mark points $p_1, \ldots, p_n$ where
each $p_i$ is a smooth point of the component $Z^{(k)}_S$ where $S$ is the subset of $P_k$ containing $i$. Call the whole pointed nodal curve we have constructed $C_0$. Give $S_0 = \Spec \kappa$ the log structure associated to the map $\bigoplus_{i = 1}^k \N e_i \to \kappa$ sending everything to 0 except for the identity element. Choose a log structure on $\pi_0 : C_0 \to S_0$ compatible with the log structure on $S$ making $\pi_0 : C_0 \to S_0$ into a log curve so that
\begin{enumerate}
  \item the edges between the $Z^{(1)}_S$'s and $E$ are all of length $e_1$;
  \item for $i = 2, \ldots, k$ the edges between the $Z^{(i)}_S$'s and $Z^{(i - 1)}_S$'s are all of length $e_i$.
\end{enumerate}

Stabilize to obtain a log curve $\pi_0^s : C^s_0 \to S_0$. Since $\pi_0^s$ is radially aligned with the basic radially aligned log structure, there is an associated map $f : S_0 \to \ol{\mathcal{M}}_{1,n}^{rad}$. Choose a factorization of it through an \'etale chart $U \to \ol{\mathcal{M}}_{1,n}^{rad}$. Now, set $S$ to be a neighborhood of the image $s$ of $S_0$ in $U$ using Theorem \ref{thm:uniform_charts}.

We now claim that the pullback of the universal curve of $\ol{\mathcal{M}}_{1,n}^{rad}$ to $S$ gives the desired test curve. Properties (i) and (ii) hold by construction. Property (iii) holds since $S$ was chosen to be an \'etale neighborhood of a point of the chart
$U$, which has a divisorial log structure.

It remains to check property (iv). If $i$ is an integer with $1 \leq i < k$, there is at least one $Z^{(i)}_S$ with at least two descendants, since $P_i \neq P_{i+1}$. If $i = k$, all of the $Z^{(i)}_S$'s possess at least three marked points. Either way, for each integer $i = 1, \ldots, k$ there is a component $Z^{(i)}_{S}$ that survives the stabilization step, and it lives at radius $\lambda(Z^{(i)}_S) = e_1 + \cdots + e_i$ by construction. Conversely, every radius is of this form. Now, if we subdivide $\trop(C|_s)$ where $\lambda = e_1 + \cdots + e_i$, the effect is to reintroduce the components $Z^{(i)}_{S}$ for $S \in P_i$. Deleting the locus where $\lambda < e_1 + \cdots + e_i$ leaves us with the trees rooted at the $Z^{(i)}_S$'s as $S$ varies through the elements of $P_i$. By construction, for each $S \in P_i$ the tree rooted at $Z^{(i)}_S$ contains the markings indexed by $S$. Therefore, the partition type of $\trop(C|_s)$ is precisely $\mathcal{P}$.
\end{proof}

With test curves in hand, we may form several families of contracted curves. Exactly one of these will be $Q$-stable for any $Q$.

\begin{lemma} \label{lem:q_stable_test_curve_contractions}
Let $\pi : C \to S$ be a test curve of type $\mathcal{P}$ centered at $s$. Let $\rho_0 = 0 < \rho_1 < \cdots < \rho_k$ be the distinct radii of the tropicalization of the central fiber. For each $i = 0, \ldots, k$, let $\ol{C}_{i} \to S$ be the contraction of $C \to S$ associated to the steep mesa with radius $\rho_i$ and let $\Gamma_i$ be the combinatorial
type of the fiber of $\ol{C}_i$ over $s$.

Let $Q \in \mathfrak{Q}_{1,n}$ be a downward closed set of partitions. Then there is exactly one index $i$ so that $\Gamma_i$ is $Q$-stable, namely
the greatest index $i$ for which $P_i \in Q$. 
\end{lemma}
\begin{proof}
Observe that
\begin{enumerate}
  \item $\ol{C}_0|_s$ is nodal and its minimal elliptic subcurve has level $P_1$;
  \item For $0 < i < k$, $\ol{C}_i|_s$ possesses an elliptic singularity of level $P_i$ and its minimal elliptic subcurve has level $P_{i+1}$;
  \item For $i = k$, $\ol{C}_i|_s$ possesses an elliptic singularity of level $P_k$ and its minimal elliptic subcurve (namely $\ol{C}_k|_s$) has level $\{ \{1\}, \ldots, \{ n\} \}$.
\end{enumerate}
The result follows immediately.
\end{proof}

With some more work, we see that exactly one of the contracted curves belongs to our arbitrary modular compactification $\mathcal{M}$ as well.

\begin{lemma} \label{lem:exactly_one_contraction}
Choose notation as in Lemma \ref{lem:q_stable_test_curve_contractions}. Then $\mathcal{M}$ contains exactly one of the families $\ol{C}_i \to S$ and exactly one of the loci $\mathcal{Z}_{\Gamma_i}$.
\end{lemma}
\begin{proof}
Choose $t$ to be a point of $S$ generizing $s$ over which $C$ is smooth. Let $T \to S$ be a map from the spectrum of a DVR with special point $x$ mapping to $s$ and generic point $\eta$ mapping to $t$.

Replacing $T$ by a finite base change if necessary, find the limit curve $C_{\mathcal{M}} \to T$ in $\mathcal{M}$ of the smooth curve $C|_\eta$. After a second base change if necessary, pick a regular family of semistable curves $C^{ss} \to T$ dominating $C_{\mathcal{M}} \to T$ and each of the families $\ol{C}_i|_T \to T$, formed by subdividing and contracting the $i$th radius of $C$. Note that $\ol{C}_0 = C|_T$. Let $E$ be the exceptional locus of $\phi : C^{ss} \to C_{\mathcal{M}}$ and let $E_i$ be the exceptional locus of $\phi_i : C^{ss} \to \ol{C}_i|_T$ for each $i$.

If $C_{\mathcal{M}}$ is stable then both $C|_T$ and $C_{\mathcal{M}}$ are stable limits of $C|_{\eta}$, so they must agree.

Otherwise, $C_{\mathcal{M}}$ possesses a unique elliptic $m$-fold point $p$. By \cite[Proposition 2.12]{smyth_mstable}, $\phi^{-1}(p)$ is a balanced subcurve
of $C^{ss}|_x$, with $\phi^{-1}(p)$ consisting of all components of $C^{ss}|_x$ whose nodal distance from the core of $C^{ss}|_x$ is less than some integer $l$. Since $C_{\mathcal{M}}$ possesses no infinitesimal automorphisms, at least one of the components of $C^{ss}|_x$ of distance exactly $l$ from the core of $C^{ss}|_x$ has at least three special points. Then $E$ is the union
of $\phi^{-1}(p)$ with the semistable components of $C^{ss}|_x$ disjoint from $\phi^{-1}(p)$.

We may repeat this argument for each of the exceptional loci $E_i$, $i > 0$. Therefore, for each $i$,
\begin{enumerate}
  \item there is a balanced subcurve $F_i$ of $C^{ss}|_x$ consisting of all components of nodal distance from the core of $C^{ss}|_x$ less than some integer $l_i$,
  \item there is a component of $C^{ss}|_x$ of distance exactly $l_i$ from the core with at least three markings, and
  \item $E_i$ is the union of $F_i$ with the semistable components of $C^{ss}|_x$ disjoint from $F_i$.
\end{enumerate}

Moreover, we claim that the loci $E_i$ exhaust the subcurves of $C^{ss}|_x$ with these properties. To see this, suppose $F'$ is a balanced subcurve of $C^{ss}|_x$
whose components are those with nodal distance less than $l'$, and so that there is a stable component $G$ of $C^{ss}|_x$ of distance $l'$ from the core. Then $\phi(G)$ maps to a stable component of $C|_x$.
The component $G$ lives at the distance $\rho_i$ from the core of $C|_x$ for some $i$. Then for this same $i$, we have that $l' = l_i$ and the rest follows.

Therefore $E = E_i$ for some $i > 0$. Since both $C_{\mathcal{M}}$ and $\ol{C}_i|_T$ are normal and obtained from contracting the same locus of $C^{ss}$, they are isomorphic.

This exhibits a curve, namely $\ol{C}_i|_x$, in $\mathcal{Z}_{\Gamma_i}$ in $\mathcal{M}$. By Lemma \ref{lem:union_of_loci}, $\mathcal{M}$ contains the whole locus. By separatedness, $\mathcal{M}$ cannot contain any of the other $\ol{C}_j|_T$ for $j \neq i$. Therefore $\mathcal{M}$ cannot contain $\mathcal{Z}_{\Gamma_{j}}$ for any $j \neq i$.

Finally, we wish to show that in fact $\mathcal{M}$ contains the entire family $\ol{C}_i \to S$. This will follow from the closure
of $\mathcal{M}$ under generization. Write $\bigoplus_i e_i \oplus \bigoplus_j \delta_j$ for $\ol{M}_{S,s} \cong \Gamma(S, \ol{M}_S)$ so that the $e_i$'s are the differences between consecutive radii, as in Definition \ref{def:basic_radially_aligned}. The base $S$ of $C \to S$ possesses a stratification by locally closed subsets $\{ W_I \}$ indexed by subsets
$I \subseteq \{ 1, \ldots, k \}$, where
\[
  W_I = \bigcap_{j \in I} \mathrm{Supp}(e_j) \cap \bigcap_{j \in I^c} (S - \mathrm{Supp}(e_j)).
\]
Since $C \to S$ satisfies the conclusions of Theorem \ref{thm:uniform_charts}, the tropicalization of the fibers of $C$ is constant on the
subsets $W_I$. It follows that the same holds of the combinatorial types of the fibers of $\ol{C}_i \to S$. Since the log structure of $S$
is divisorial, for each $I \subseteq \{1, \ldots, k \}$, there is a generization $\eta_{I} \to s$ where $\eta_I \in W_I$. Since $\mathcal{M}$
is closed under generization and $\mathcal{M}$ contains $\ol{C}_i|_s$, $\mathcal{M}$ must also contain $\ol{C}_i|_{\eta_I}$. Then, since
membership in $\mathcal{M}$ is determined by combinatorial type, $\mathcal{M}$ contains all of $\ol{C}_i|_{W_I}$. We conclude that the whole
family $\ol{C}_i \to S$ belongs to $\mathcal{M}$.
\end{proof}

\begin{remark}
At first glance, the conclusion of the lemma looks different for test curves whose chains of partitions agree but whose cores have different combinatorial types, since their combinatorial types $\Gamma_0$ will be distinct. Such pairs of test curves share combinatorial types $\Gamma_i$ for $i \geq 1$, so, comparing the conclusion of the lemma for the various test curves, either
$\mathcal{M}$ chooses to contract the core of all test curves of type $\mathcal{P}$ or it contracts the core of none of them.
\end{remark}

In the case that the chain $\mathcal{P}$ consists of a single partition $P$, Lemma \ref{lem:exactly_one_contraction} tells us 
there are only two ``choices" that $\mathcal{M}$ could make: either $\mathcal{M}$ contains the contracted curve $\ol{C}_1$ or $\mathcal{M}$ contains the uncontracted curve $C$. In the former case,
say that $\mathcal{M}$ \emph{contracts} $P$. Our next claim is that for any test curve
$C$, the combinatorial type of the contraction of $C$ included by $\mathcal{M}$ is determined solely by the partitions which $\mathcal{M}$
contracts.

\begin{lemma} \label{lem:determined_by_length_1_chains}
Choose notation as in Lemma \ref{lem:exactly_one_contraction}. Let $i$ be the index of the family $\ol{C}_i \to S$ contained in $\mathcal{M}$.
Let $j$ be any integer with $1 \leq j \leq k$. Then $j \leq i$ if and only if $\mathcal{M}$ contracts $P_j$.
\end{lemma}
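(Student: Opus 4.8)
The plan is to realize, as a single fiber of the family $\ol{C}_i \to S$ that is already known to lie in $\mathcal{M}$, a curve whose combinatorial type is that of the central fiber of a one-layer test curve of type $(P_j)$ --- contracted if $j \le i$, uncontracted if $j > i$ --- and then to read off the answer from Lemma~\ref{lem:exactly_one_contraction} applied to that one-layer test curve. By the remark following Lemma~\ref{lem:exactly_one_contraction}, whether $\mathcal{M}$ contracts $P_j$ does not depend on the choice of test curve of type $(P_j)$, so we may use the one produced by Lemma~\ref{lem:test_curves_exist}, whose core is a smooth genus-one curve.

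First I would single out the relevant fiber of $\ol{C}_i \to S$. Writing $\ol{M}_{S,s} \cong \Gamma(S, \ol{M}_S) = \bigoplus_{m=1}^{k} \N e_m \oplus \bigoplus_{l} \N \delta_l$ as in Definition~\ref{def:basic_radially_aligned}, so that $\rho_m = e_1 + \cdots + e_m$, divisoriality of the log structure of $S$ lets me choose a generization $\ol{\eta} \to s$ whose characteristic monoid $\ol{M}_{S,\ol{\eta}}$ is freely generated by the image of $e_j$ alone (take a geometric point over the generic point of an irreducible component through $s$ of the boundary divisor cut out by $e_j$). By Theorem~\ref{thm:uniform_charts}, $C|_{\ol{\eta}}$ is obtained from $C|_s$ by the face contraction sending $e_m \mapsto 0$ for $m \ne j$ and every $\delta_l \mapsto 0$; a direct inspection using Definition~\ref{def:partition_type} then shows that $\trop(C|_{\ol{\eta}})$ is the stabilized one-layer tree of partition type $P_j$ with smooth genus-one core, that $\rho_i|_{\ol{\eta}} = 0$ when $i < j$, and that $\rho_i|_{\ol{\eta}}$ coincides with the generator $e_j$ with $\Part(\rho_i|_{\ol{\eta}}) = P_j$ when $j \le i$.

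Since formation of the diagram of Theorem~\ref{thm:contraction} commutes with base change, $\ol{C}_i|_{\ol{\eta}}$ is the contraction of $C|_{\ol{\eta}}$ at the radius $\rho_i|_{\ol{\eta}}$: it is $C|_{\ol{\eta}}$ itself when $i < j$, and when $j \le i$ it is obtained by contracting the genus-one core of $C|_{\ol{\eta}}$ to an elliptic Gorenstein singularity of level $P_j$. Fixing a one-layer test curve $C' \to S'$ of type $(P_j)$ centered at $s'$ as in Lemma~\ref{lem:test_curves_exist}, one checks that $C'|_{s'}$ and $\ol{C}'_1|_{s'}$ have respectively the combinatorial types $\Gamma_0'$ of $C|_{\ol{\eta}}$ and $\Gamma_1'$ of the contraction of $C|_{\ol{\eta}}$ at $e_j$, and that these types depend only on $P_j$; by Lemma~\ref{lem:exactly_one_contraction} applied to $C' \to S'$, $\mathcal{M}$ contains exactly one of $\mathcal{Z}_{\Gamma_0'}, \mathcal{Z}_{\Gamma_1'}$, and $\mathcal{M}$ contracts $P_j$ exactly when it contains $\mathcal{Z}_{\Gamma_1'}$. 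Finally, since $\ol{C}_i \to S$ is a family in $\mathcal{M}$, its fiber $\ol{C}_i|_{\ol{\eta}}$ lies in $\mathcal{M}$, so by Lemma~\ref{lem:union_of_loci} $\mathcal{M}$ contains $\mathcal{Z}_{\Gamma(\ol{C}_i|_{\ol{\eta}})}$; if $j \le i$ this locus is $\mathcal{Z}_{\Gamma_1'}$, whence $\mathcal{M}$ contracts $P_j$, and if $j > i$ it is $\mathcal{Z}_{\Gamma_0'}$, forcing $\mathcal{M} \not\supseteq \mathcal{Z}_{\Gamma_1'}$, so $\mathcal{M}$ does not contract $P_j$. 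I expect the main obstacle to be the bookkeeping of the middle step: confirming that such a generization $\ol{\eta}$ with $\ol{M}_{S,\ol{\eta}}$ generated by $e_j$ really exists inside $S$ (this is precisely where divisoriality is used) and that $C|_{\ol{\eta}}$ together with its contraction at $e_j$ match the one-layer test curve of type $(P_j)$ and its core-contraction exactly at the level of combinatorial type.
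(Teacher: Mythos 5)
Your proof is correct and is essentially the paper's argument: both hinge on passing to the locus of $S$ where only $e_j$ survives, noting that the contraction commutes with base change, and observing that $\rho_i$ restricts there to $e_j$ if $j \le i$ and to $0$ otherwise. The only cosmetic difference is that the paper restricts the whole family to that stratum $S'$ and observes that $C|_{S'}$ is itself a test curve of type $P_j$, whereas you pass to a single generic geometric fiber and route the conclusion through combinatorial types, Lemma~\ref{lem:union_of_loci}, and the remark that contraction of $P_j$ is independent of the chosen test curve.
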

\begin{proof}
Write $\bigoplus_i e_i \oplus \bigoplus_j \delta_j$ for $\ol{M}_{S,s} \cong \Gamma(S, \ol{M}_S)$ so that the $e_i$'s are the differences between consecutive radii of $\trop(C)$, as in Definition \ref{def:basic_radially_aligned}.
Let $C' \to S'$ be the restriction of $C \to S$ to the complement of the support of $e_1, \ldots, \hat{e}_j, \ldots, e_k$. Choose
$s'$ to be a point of the support of $e_j$ inside $S'$. It is not difficult to check that the resulting family is a test family of type $P_j$ (i.e.,
the chain of partitions of length one containing just the partition $P_j$) whose log structure is generated by $e_j$. The unique non-zero
radius of this test family is $e_j$.
Since the contraction of a family of mesa curves commutes with base change, $\ol{C}_i|_{S'}$ will be the contraction
of $C'$ with respect to $\rho|_{S'}$. Now, the restriction of $\rho_i$ from $S$ to $S'$ is either $e_j$ if $j \leq i$ or 0 if $i < j$. In the first case, $\mathcal{M}$ contracts $P_j$. In the latter $\mathcal{M}$ does not.
\end{proof}

\begin{lemma} \label{lem:everything_is_test_curve}
Every combinatorial type $\Gamma$ in $\mathcal{U}_{1,n}$ is the combinatorial type of some contraction of a test curve.
\end{lemma}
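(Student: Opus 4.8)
The plan is to sort the combinatorial types $\Gamma$ occurring in $\mathcal U_{1,n}$ by where the genus sits. Since an elliptic Gorenstein singularity contributes $1$ to the arithmetic genus, and a cycle in the dual graph also contributes $1$, a curve in $\mathcal U_{1,n}$ falls into exactly one of two families: (a) it has only nodes, with its genus carried by a \emph{core} that is a smooth genus one component, a nodal cubic, or a cycle of rational components, the rest of the curve being a forest of rational tails; or (b) it has a single elliptic $m$-fold point $e_0$, which then lies on exactly $m$ components (its branch $\mathbb{P}^1$'s), with the complement of those components again a forest of rational tails. I will build a test curve realizing $\Gamma$ as $\Gamma_0$ in case (a) and as $\Gamma_1$ in case (b).

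In case (a) the condition that a nodal curve have no infinitesimal automorphisms coincides with Deligne--Mumford stability, so $\Gamma$ is the dual graph of a DM-stable $n$-pointed genus one nodal curve. I fix such a curve $C_0$ over an algebraically closed field, choose a linear extension of the ``distance to the core'' order on its components, and use that order to put a basic radially aligned log structure on $C_0$ exactly as in the construction of Lemma \ref{lem:test_curves_exist}; spreading out over an \'etale chart and shrinking via Theorem \ref{thm:uniform_charts} gives a test curve $\pi : C \to S$ of its partition type. Since $\ol{C}_0 = C$ and the combinatorial type of a nodal curve is its dual graph, $\Gamma_0 = \Gamma$.

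In case (b), set $P_1 := \lev(e_0)$ and define a strictly increasing chain $\mathcal P : P_1 \prec \cdots \prec P_k$ by reading off, one level at a time as one moves outward from $e_0$ through the rational tails, how the block of markings on each branch refines (discarding any level at which no block splits). One must check $\mathcal P$ is an admissible chain, i.e.\ that $P_k$ --- the finest term, recording the markings on the outermost components of $\Gamma$ --- is not the discrete partition; discreteness of $P_k$ would force every outermost component to carry a single marking, which is impossible for a tail $\mathbb{P}^1$ (it would then have only two special points) and, for a branch $\mathbb{P}^1$ of $e_0$, impossible by a direct computation with the standard local model $A$ of the elliptic $m$-fold point used in the proof that the $\mathcal Z_\Gamma$ are irreducible: if every branch carried one marking, choosing on each branch a degree $\le 2$ vector field $f_j$ with $f_j(0)=0$, vanishing at that marking, and with all the $f_j'(0)$ equal produces a one-parameter family of global vector fields, contradicting $\Gamma \in \mathcal U_{1,n}$. (In particular $P_1 = \lev(e_0)$ is not discrete either.) Now take the test curve $\pi : C \to S$ of Lemma \ref{lem:test_curves_exist} of type $\mathcal P$, with radii $0 < \rho_1 < \cdots < \rho_k$. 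By Theorem \ref{thm:contraction}, the contraction $\ol{C}_1 \to S$ at radius $\rho_1$ collapses the locus $\{\lambda < \rho_1\}$ --- the smooth genus one core --- to an elliptic Gorenstein singularity of level $\Part(\rho_1) = P_1 = \lev(e_0)$ and is an isomorphism elsewhere; by construction of $\mathcal P$ the subtree of $\ol{C}_1|_s$ hanging off the branch of this singularity corresponding to a block $S \in P_1$ is exactly the rational tail $D_{j(S)}$ of $\Gamma$ attached there, so $\Gamma_1 = \Gamma$.

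The step I expect to demand the most care is this last identification, because of \emph{thin} branches: a branch of $e_0$ too sparse near $e_0$ to survive as a stable subcurve of a nodal curve --- a lone $\mathbb{P}^1$ carrying one marking, or a branch $\mathbb{P}^1$ meeting $e_0$ and only one node of its tail --- has its markings pushed onto the core of the test curve by the stabilization step inside Lemma \ref{lem:test_curves_exist}. What rescues the argument is that the blow-up $\phi$ of Theorem \ref{thm:contraction}, which subdivides every edge and leg where $\lambda = \rho_1$, reintroduces precisely those components at radius $\rho_1$, so that once the core is collapsed they reappear as the missing branches of $e_0$. Checking that the components $\phi$ creates match exactly --- in number, incidence with $e_0$, and distribution of markings --- the components the stabilization removed is the crux; the remainder is bookkeeping with Definitions \ref{def:combinatorial_type} and \ref{def:partition_type} and Lemma \ref{lem:partitions_increase_strictly}.
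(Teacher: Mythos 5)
Your argument is correct in outline but takes a genuinely different route from the paper's. The paper does not reconstruct the test curve combinatorially from $\Gamma$: it picks a representative $C_0$, smooths it over a DVR, uses properness of $\ol{\mathcal{M}}_{1,n}^{rad}$ to obtain the limit radially aligned curve, spreads that out into a test curve via an \'etale chart, and then invokes the balanced-subcurve argument from the proof of Lemma \ref{lem:exactly_one_contraction} (exceptional loci of maps from a common semistable model are balanced, and the $E_i$ exhaust the balanced subcurves) to conclude that $C_0$ is isomorphic to one of the contractions $\ol{C}_i|_x$. That route completely sidesteps the step you correctly identify as your crux, namely verifying that the stabilization-then-subdivision process returns exactly the tree structure of $\Gamma$; in exchange it uses properness and semistable reduction, and it does not tell you which $i$ realizes $\Gamma$. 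Your construction is more explicit (it pins down $i=0$ in the nodal case and $i=1$ in the $m$-fold case) but the deferred reconstruction really is the substance of the proof in your version and would need to be written out: one must check that a rooted marked tree in which every non-root, non-branch vertex has at least three special points is faithfully encoded by its level-by-level partition chain, and that the thin branches removed by stabilization are exactly the components reintroduced by the subdivision at $\lambda=\rho_1$.

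Two smaller points. First, your parenthetical about ``discarding any level at which no block splits'' should be deleted rather than kept as a safety valve: if a level genuinely had to be discarded, the radii of the reconstructed test curve would no longer match the distance-to-$e_0$ levels of $\Gamma$ and the reconstruction would produce the wrong tree. Fortunately no discarding is ever needed: $P_1 \prec P_2$ is strict because some branch of $e_0$ has two special points besides $e_0$ (Smyth's criterion, quoted in the paper), and $P_i \prec P_{i+1}$ is strict for $i \ge 2$ because any parent component at distance $\ge 2$ has at least three special points. Second, your vector-field computation showing $P_k$ (and $P_1$) is not discrete is fine but unnecessary — it is again exactly the infinitesimal-automorphism criterion of \cite[Corollary 2.4]{smyth_mstable} already used elsewhere in the paper.
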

\begin{proof}
Choose a representative curve $C_0$ for $\Gamma$. Find a smoothing $C \to T$ of $C_0$ over the spectrum of a discrete valuation ring with special point $x$ and generic point $\eta$. Replacing $T$ by a finite base change if necessary, let $C^{rad} \to T$ be the limit radially aligned curve of $C|_{\eta} \to \eta$ in $\ol{\mathcal{M}}_{1,n}^{rad}$ with the basic radially aligned log structure.

Arguing as in Lemma \ref{lem:test_curves_exist}, we can
find a family $C^{test} \to S$ centered at $s$ with central fiber $C^{rad}|_x$ by choosing a sufficiently small \'etale neighborhood
of $C^{rad}|_x$ in an \'etale chart for $\ol{\mathcal{M}}_{1,n}^{rad}$. This neighborhood may be assumed to satisfy the conditions of Theorem \ref{thm:uniform_charts} and its log
structure will be divisorial since it is pulled back from the log structure of $\ol{\mathcal{M}}_{1,n}^{rad}$ along an \'etale morphism.
This family $C^{test} \to S$ is therefore a test curve of some type, with the type given by the partitions induced by the various radii
of the tropicalization of its central fiber.

Moreover, replacing $T$ by a finite base change if necessary, we may assume that $C^{rad} \to T$ is pulled back from $C^{test} \to S$.
We are now in exactly the situation of the proof of Lemma \ref{lem:exactly_one_contraction}, so we conclude that
$C_0$ is one of the contractions of the central fiber of $C^{test}$, as required.
\end{proof}

Now that everything is in place, our classification result follows easily.

\begin{proof}[Proof of Theorem \ref{thm:classification}]
Let $Q_{\mathcal{M}}$ be the set of non-discrete partitions $P$ of $\{1, \ldots, n \}$ so that $\mathcal{M}$ contracts $P$. By Lemma \ref{lem:determined_by_length_1_chains},
$Q_{\mathcal{M}}$ is downward closed. 

By Lemma \ref{lem:determined_by_length_1_chains} and Lemma \ref{lem:q_stable_test_curve_contractions} the combinatorial types that appear as contractions of test curves contained in $\mathcal{M}$ and $\overline{\mathcal{M}}_{1,n}(Q_\mathcal{M})$ are the same. On the other hand, by Lemma \ref{lem:everything_is_test_curve}, every combinatorial type appears as $\Gamma_i$ for some test curve $C^{test}$. It follows that $\mathcal{M}$ contains precisely the $Q_{\mathcal{M}}$-stable curves.
\end{proof}

\section{Interpolation of $Q$-stable spaces} \label{sec:interpolation}

We learned in Section \ref{sec:radially_aligned_contractions} that the choices of universal radii are in bijection with compatible choices of radii on 1-layer
trees. Each 1-layer tree $\Gamma$ has two possible radii: the zero radius and a unique nonzero radius; let's call it $e_\Gamma$. Thinking tropically, it is tempting to choose
radii intermediate to $0$ and $e_\Gamma$ to contract. If we do this and follow the image of the universal curve under the resulting contraction, we find Artin stacks that interpolate between the $Q$-stable spaces. The moduli problems of these stacks admit an elegant description, so we will define them and verify their algebraicity directly prior to a tropical construction.

We therefore imagine choosing an intermediate radius for each 1-layer tree $\Gamma$ in the form $c_\Gamma e_\Gamma$ with $c_\Gamma \in [0,1]$. The possible choices of
such radii form a cube complex, which we now describe directly.

\begin{definition}
Let $X_{1,n}$ be the subset of $[0,1]^{\PPart(n)}$ of tuples $(c_P)_{P \in \PPart(n)}$ with the properties:
\begin{enumerate}
  \item whenever $P_1, P_2$ are two partitions of $\{1, \ldots, n\}$ with $P_1 \prec P_2$ and $c_{P_2} > 0$, then $c_{P_1} = 1$;
  \item $c_{\{ \{1\}, \ldots, \{n\} \}} = 0$.
\end{enumerate}

We give $X_{1,n}$ the structure of a cube complex whose open cells are the intersections of the open cells of $[0,1]^{\PPart(n)}$ with $X_{1,n}$.
\end{definition}

Note that the vertices of $X_{1,n}$ are in bijection with $\mathfrak{Q}_n$: the correspondence is given by taking a subset $Q$ of $\PPart(n)$ to its indicator function.

\begin{example}
$X_{1,3}$ consists of the tuples
\[
  (x, y, z, w) = (c_{\{\{1,2,3\}\}}, c_{\{\{1\}, \{2,3\}\}}, c_{\{\{2\}, \{1,3\}\}}, c_{\{\{3\}, \{1,2\}\}})
\]
in $[0,1]^4$ where $y = z = w = 0$ unless $x = 1$. Then $X_{1,3}$ consists of the 1-dimensional cube $\{ (x, 0, 0, 0) \mid x \in [0,1] \}$ attached to
the 3-dimensional solid cube $\{ (1,y,z,w) \mid y,z,w \in [0,1] \}$. The latter cube fills in the interior of the cube visible in Figure \ref{fig:q_stable_lattice_3}.
\end{example}

The stacks associated to these imagined universal radii are as follows.

\begin{definition}
Let $(c_P)_{P \in \PPart(n)}$ be an element of $X_{1,n}$. Let
\begin{align*}
  Q_{sing} &= \{ P \in \PPart(n) \mid c_P > 0 \} \\
  Q_{curve} &= \{ P \in \PPart(n) \mid c_P < 1 \}.
\end{align*}
An $n$-pointed family of Gorenstein curves $(\pi : C \to S , \sigma_1, \ldots, \sigma_n)$ is \emph{$(c_P)_{P \in \PPart(n)}$-stable} if
\begin{enumerate}
  \item $(\pi : C \to S, \sigma_1, \ldots, \sigma_n)$ is a flat and proper family
  of connected, reduced, Gorenstein curves of arithmetic genus one with $n$ distinct marked points
\end{enumerate}
and for each geometric fiber $C_s$ the following conditions hold:
\begin{enumerate}
  \item if $p \in C_s$ is an elliptic Gorenstein singularity, then $\lev(p) \in Q_{sing}$;
  \item if $Z \subseteq C_s$ is a connected subcurve of genus one, then $\lev(Z) \in Q_{curve}$;
  \item if $Z_1 \subsetneq Z_2$ is a proper inclusion of connected subcurves of $C_s$ of arithmetic genus one, then $\lev(Z_1) \prec \lev(Z_2)$;
  \item if $F$ is an irreducible component of the minimal genus one subcurve $Z_{min}$ of $C_s$, then $F$ meets $\ol{C_s - Z_{min}} \cup \{\sigma_1(s), \ldots, \sigma_n(s) \}$ in at least one point.
\end{enumerate}
 
Let $\ol{\mathcal{M}}_{1,n}((c_P))$ be the stack whose $S$-points are the $(c_P)$-stable families of curves over $S$.
\end{definition}

\begin{remark}
Note that $Q_{sing}$ is downward closed, $Q_{curve}$ is upward closed, and the two sets intersect in the partitions $P$ where $0 < c_P < 1$.

The new feature of this definition is that a $(c_P)$-stable curve $C$ may have a minimal genus one subcurve $Z$ with an elliptic
Gorenstein singularity $p$ so that $\lev(p) = \lev(Z)$, so long as $\lev(p) \in Q_{sing} \cap Q_{curve}$. If this happens, then each of the branches
of the singularity $p$ will have only one special point other than $p$. By \cite[Corollary 2.4]{smyth_mstable}, $C$ then has infinitesimal
automorphisms. (In fact, it has a $\mathbb{G}_m$ of automorphisms.) It follows that whenever $Q_{sing} \cap Q_{curve}$ is non-empty, $\ol{\mathcal{M}}_{1,n}((c_P))$ is not a Deligne-Mumford stack.
\end{remark}

\begin{theorem} \label{thm:Q_at_vertices}
If $(c_P)$ consists only of 1's and 0's, then $\ol{\mathcal{M}}_{1,n}((c_P)) = \ol{\mathcal{M}}_{1,n}(Q)$ where
\[
  Q = \{ P \in \PPart(n) \mid c_P = 1 \}.
\]
\end{theorem}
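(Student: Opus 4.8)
The plan is to unwind both moduli functors and check that, once every $c_P$ is $0$ or $1$, they impose exactly the same conditions on a family. First I would do the bookkeeping: $Q_{sing}=\{P:c_P>0\}=\{P:c_P=1\}=Q$ and $Q_{curve}=\{P:c_P<1\}=\{P:c_P=0\}=\PPart(n)\smallsetminus Q$, so in particular $Q_{sing}\cap Q_{curve}=\varnothing$. Property (i) of $X_{1,n}$ ($P_1\prec P_2$ and $c_{P_2}>0$ imply $c_{P_1}=1$) says precisely that $Q$ is downward closed, and property (ii) ($c_{\{\{1\},\dots,\{n\}\}}=0$) says $Q$ omits the discrete partition; hence $Q\in\mathfrak{Q}_n$ and the right-hand side is defined.

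Next I would match the conditions term by term. The ambient datum is the same in both definitions: a flat and proper family of connected reduced Gorenstein curves of arithmetic genus one with $n$ disjoint sections. On a geometric fibre, the $(c_P)$-stable level condition on singularities is $\lev(p)\in Q_{sing}=Q$, which is verbatim condition (iii) of $Q$-stability (on a Gorenstein curve, an elliptic Gorenstein singularity is precisely a point of genus one), and the $(c_P)$-stable level condition on subcurves is $\lev(Z)\in Q_{curve}$, i.e.\ $\lev(Z)\notin Q$, which is verbatim condition (ii) of $Q$-stability. So the theorem reduces to the following: for a connected reduced Gorenstein genus one curve $C$ over an algebraically closed field with $n$ distinct markings satisfying $\lev(p)\in Q$ for all elliptic Gorenstein singularities $p$ and $\lev(Z)\notin Q$ for all genus one subcurves $Z$, the two remaining $(c_P)$-stable conditions — that $\lev$ strictly increases along proper inclusions of connected genus one subcurves, and that every component of the minimal genus one subcurve $Z_{min}$ meets $\overline{C-Z_{min}}\cup\Sigma$ — hold if and only if $H^0(C,\Omega_C^\vee(-\Sigma))=0$.

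I would prove this equivalence by reading both sides off the dual graph of $C$, using Smyth's local analysis of $\Omega^\vee$ at the elliptic Gorenstein singularities (as used in the Remark above, cf.\ \cite[Corollary 2.4]{smyth_mstable}): for $\nu\colon\widetilde C\to C$ the normalization, $H^0(C,\Omega_C^\vee(-\Sigma))$ is assembled from vector fields on the components $\widetilde F$ vanishing along $\nu^{-1}(\mathrm{Sing}\,C)$ and the markings, subject to a gluing constraint at each elliptic $m$-fold point (vacuous at a node or a cusp); in particular a rational component contributes $0$ exactly when it carries at least three special points. A direct analysis of which proper inclusions $Z_1\subsetneq Z_2$ of connected genus one subcurves can have $\lev(Z_1)=\lev(Z_2)$ shows the strict-increase condition is equivalent to every $\mathbb{P}^1$ in a rational tail of $C$ having at least three special points, so granting it, only components of the core $Z_{min}$ can contribute. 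For $Z_{min}$ a smooth elliptic curve, a nodal cubic, or a cycle of $\mathbb{P}^1$'s, the ``core meets the outside'' condition (together with $n\ge 1$) forces at least three special points on each component, killing the contribution — for the cycle, a free component would give a $\mathbb{G}_m$ of automorphisms. When $Z_{min}$ carries an elliptic Gorenstein singularity $p$, the two imposed level conditions, together with $Q$ being downward closed and omitting the discrete partition, force $\lev(p)\prec\lev(Z_{min})$, and hence force at least one component of $Z_{min}$ to carry at least two special points outside $Z_{min}$; the vanishing of the vector field on that component then propagates, through the gluing constraint at $p$, to all branches, so $H^0(C,\Omega_C^\vee(-\Sigma))=0$. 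The converse is the easy direction: if either $(c_P)$-stable condition fails, the offending rational component or core component visibly supports a nonzero section of $\Omega_C^\vee(-\Sigma)$.

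The step I expect to be the main obstacle is this last point about the elliptic $m$-fold singularities: one must verify from Smyth's explicit local model that the gluing among the $m$ branches is rigid enough that a single ``long'' branch — the one the inequality $\lev(p)\prec\lev(Z_{min})$ guarantees — annihilates all of $H^0(C,\Omega_C^\vee(-\Sigma))$. The cuspidal case, the cycle case, and the combinatorial identification of the strict-increase condition with ``rational tails have at least three special points'' are comparatively routine, as is the bookkeeping in the first two steps.
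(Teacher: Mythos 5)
Your proposal is correct and follows essentially the same route as the paper: match the singularity and subcurve level conditions verbatim, then show the strict-increase and core-meets-the-outside conditions are together equivalent to $H^0(C,\Omega_C^\vee(-\Sigma))=0$ via Smyth's combinatorial criterion, with the key point in the singular-core case being that $Q_{sing}\cap Q_{curve}=\varnothing$ forces $\lev(p)\prec\lev(Z_{min})$ and hence a branch with two extra special points. The ``main obstacle'' you flag is exactly the content of \cite[Corollary 2.4]{smyth_mstable}, which the paper simply cites rather than re-derives from the local model.
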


\begin{proof}
The only differently stated conditions for membership in $\ol{\mathcal{M}}_{1,n}((c_P))$ and $\ol{\mathcal{M}}_{1,n}(Q)$ are those on
geometric fibers, so it suffices to show their geometric points coincide.

Suppose $C$ is a $Q$-stable curve over some algebraically closed field. Then it is clear that conditions (i) and (ii) hold since $Q = Q_{sing}$ and $\PPart(n) - Q = Q_{curve}$. Condition (iv) holds since $C$ has no infinitesimal automorphisms.

To see that condition (iii) holds, suppose that $Z_1 \subsetneq Z_2$ is a proper inclusion of connected subcurves $C$ of arithmetic genus one.
Then there is a rational component $F$ of $Z_2$ meeting $Z_1$. Since $F$ has at least 3 special points, $\lev(Z_1) \prec \lev(Z_1 \cup F) \preceq \lev(Z_2)$.

Conversely, suppose that $C$ is $(c_P)$-stable. The level conditions for $Q$ on $C$ hold by (i) and (ii). It remains to show that $C$ has
no infinitesimal automorphisms, that is, by \cite[Corollary 2.4]{smyth_mstable}:
\begin{enumerate}
  \item each irreducible component $F$ of $C$ with genus zero not meeting an elliptic Gorenstein singularity has at least three special points;
  \item if $C$ has an elliptic Gorenstein singularity $q$, then each component $B$ of the minimal elliptic subcurve containing $q$ has at least one special point other than $q$ and there is at least such component with two special points other than $q$.
\end{enumerate}

To address (i), let $F$ be an irreducible component of $C$ with genus zero not meeting an elliptic Gorenstein singularity. Then either $C$ is nodal
and $F$ belongs to the core of $C$, or $F$ is not a component of the minimal subcurve of genus one. In the former case, we are done by (iv).
In the latter, let $Z_1$ be the union of the minimal genus one subcurve $E$ together with the components on the unique path from $E$ to $F$, not including $F$. Let $Z_2 = Z_1 \cup F$. Then, since $\lev(Z_1) \prec \lev(Z_2)$, $F$ must have at least three special points.

To address (ii), suppose that $C$ has an elliptic Gorenstein singularity $q$. Let $Z_{min}$ be the minimal genus one subcurve of $C$. If $B$ is an irreducible component of $Z_{min}$, then $B$ has at least one other special point by condition (iv).
Next, since $Q_{sing} \cap Q_{curve} = \emptyset$, we must have a proper refinement $\lev(q) \prec \lev(Z_{min})$. This implies that there is at least one branch of $q$ with two special points other than $q$.
\end{proof}

\begin{theorem}
The stack of $(c_P)$-stable curves is deformation open. That is, if $S$ is a noetherian scheme and and $\pi: C \to S$ is a flat, projective morphism
with one-dimensional fibers and sections $\sigma_1, \ldots, \sigma_n$, then the set 
\[
  T = \{ s \in S \mid (\pi_s : C_{\ol{s}} \to \ol{s}, \{ \sigma_i(\ol{s}) \}_{i = 1}^n) \text{ is $((c_P))$-stable} \}
\]
is open in $S$.
\end{theorem}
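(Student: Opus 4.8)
The plan is to mimic the proof of Theorem~\ref{thm:Q_deformation_open}. Since $S$ is noetherian, it suffices to show that $T$ is constructible and stable under generization. The requirements that each geometric fiber $C_{\ol{s}}$ be connected, reduced, Gorenstein of arithmetic genus one with $n$ distinct marked points define an open subset of $S$ containing $T$, so I may assume they hold on all of $S$, exactly as in \cite[Lemma~3.10]{smyth_mstable}. The remaining four fiberwise conditions of $(c_P)$-stability---on the level of elliptic Gorenstein singularities, on the level of genus one subcurves, on strictness of $\lev$ along a proper inclusion of genus one subcurves, and on the components of the minimal genus one subcurve---each depend only on the combinatorial type of the fiber in the sense of Definition~\ref{def:combinatorial_type}. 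Since the fibers of a fixed combinatorial type form a locally closed subset of $S$ and there are finitely many combinatorial types, $T$ is a finite union of locally closed sets, hence constructible.

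For stability under generization I would reduce to $S = \Spec R$ with $R$ a discrete valuation ring, closed point $0$ and generic point $\eta$, and assume $C_{\ol{0}}$ is $(c_P)$-stable; the task is to show $C_{\ol{\eta}}$ is $(c_P)$-stable. The open conditions pass automatically to the generic fiber. Conditions (i) and (ii) go through verbatim as in Theorem~\ref{thm:Q_deformation_open}: an elliptic $l$-fold point of $C_{\ol{\eta}}$ has as its limit in $C_{\ol{0}}$ an elliptic $m$-fold point with $m \geq l$, so its level is a coarsening of an element of $Q_{sing}$, and $Q_{sing}$ is downward closed; dually, the minimal genus one subcurve of $C_{\ol{\eta}}$ has a flat limit $\ol{Z} \subseteq C_{\ol{0}}$ which is again a connected genus one subcurve, hence contains the minimal genus one subcurve of $C_{\ol{0}}$ and has the same level as it, so by monotonicity of $\lev$ in subcurves and $Q_{curve}$ being upward closed every genus one subcurve of $C_{\ol{\eta}}$ has level in $Q_{curve}$.

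The genuinely new part is verifying conditions (iii) and (iv) on $C_{\ol{\eta}}$, and here the main tool is again the flat limit: a connected genus one subcurve $Z$ of $C_{\ol{\eta}}$ has a flat limit $\ol{Z} \subseteq C_{\ol{0}}$ that is connected of genus one, obtained from the minimal genus one subcurve of $C_{\ol{0}}$ by attaching a forest of rational trees, and a proper inclusion $Z_1 \subsetneq Z_2$ limits to an inclusion $\ol{Z}_1 \subseteq \ol{Z}_2$. For (iv), a component $F$ of the minimal genus one subcurve of $C_{\ol{\eta}}$ bearing no marking and meeting the rest of $C_{\ol{\eta}}$ only within that subcurve would, upon passing to $C_{\ol{0}}$, produce a component of the minimal genus one subcurve of $C_{\ol{0}}$---or a component on a ``useless'' rational tail of $\ol{Z}$---witnessing a failure of (iv) for $C_{\ol{0}}$, a contradiction. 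For (iii), a proper inclusion $Z_1 \subsetneq Z_2$ with $\lev(Z_1) = \lev(Z_2)$ forces a rational component of $C_{\ol{\eta}}$ with at most one ``outward'' marking-direction, and following the limits of $Z_1$, $Z_2$ and the intervening components produces the same failure in $C_{\ol{0}}$. The hard part will be precisely this bookkeeping: ruling out that passing to $C_{\ol{0}}$ introduces new components of $\ol{Z}$ that absorb the defect (and dealing with possibly non-reduced flat limits), and correctly matching the outward directions of a component upstairs and downstairs. Once these points are settled, openness of $T$ follows just as for Theorem~\ref{thm:Q_deformation_open}.
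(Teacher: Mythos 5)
Your overall strategy (constructibility via combinatorial types, then stability under generization over a DVR) is exactly the paper's, and your treatment of constructibility, of conditions (i) and (ii), and essentially of condition (iii) matches the paper: for (iii) the paper simply observes, by taking limits of the connected components of $(\ol{C_{\ol{\eta}} - Z_i^{\ol{\eta}}}) \cup \Sigma|_{\ol{\eta}}$ after a finite base change identifying components of $C$ with components of $C_{\ol{\eta}}$, that $\lev(Z_i^{\ol{0}}) = \lev(Z_i^{\ol{\eta}})$, and then applies strictness in the central fiber; your contrapositive phrasing amounts to the same thing once that equality of levels is in hand.

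The genuine gap is condition (iv), precisely the ``bookkeeping'' you defer. Your plan --- derive from a failure of (iv) in $C_{\ol{\eta}}$ a failure of (iv) in $C_{\ol{0}}$ --- is not how the argument closes, and it is not clear it can close that way, because the limit $F_{\ol{0}}$ of the offending component $F_{\ol{\eta}}$ may acquire extra rational components that do meet the complement or carry markings, so no single component of the minimal genus one subcurve of $C_{\ol{0}}$ need violate (iv). The paper instead argues directly: it reduces to showing $W_{\ol{0}} \cap F_{\ol{0}} \neq \emptyset$ (since $W \cap F$ is a flat family of reduced points), uses that $V_{\ol{0}} = \ol{Z_{\ol{0}} - F_{\ol{0}}}$ is connected of genus zero to conclude that $F_{\ol{0}}$ must contain a component $F'$ of the minimal genus one subcurve of $C_{\ol{0}}$, invokes (iv) for the central fiber to produce a rational tree $T$ attached to $F'$, and then runs a case analysis on how the components of $T$ distribute between $F_{\ol{0}}$ and $W_{\ol{0}}$. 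Crucially, the final case --- all of $T$ absorbed into $F_{\ol{0}}$ --- is ruled out by appealing to condition (iii) of the central fiber, which forces $T$ to carry a marking. This interplay between (iii) and (iv) in the central fiber is the missing idea in your proposal; without it the defect-absorption problem you flag is not resolved.
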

\begin{proof}
As in Theorem \ref{thm:Q_deformation_open}, we may assume that the geometric fibers $C_{\ol{s}}$ of $\pi$ are reduced, connected, and of arithmetic genus one with only Gorenstein singularities, since these are open conditions.

Again as in Theorem \ref{thm:Q_deformation_open}, the locus $T$ is constructible since satisfaction of the remaining conditions is constant on combinatorial types and the curves with a given combinatorial type form a locally closed subset of $S$. Therefore it suffices to check that the remaining conditions hold under generization.

So assume $S$ is the spectrum of a DVR with closed point $0 \in S$ and generic point $\eta \in S$. We must show that if $(C_{\ol{0}}, \sigma_1(\ol{0}), \ldots, \sigma_n(\ol{0}))$ satisfies the remaining conditions, then so does $(C_{\ol{\eta}}, \sigma_1(\ol{\eta}), \ldots, \sigma_n(\ol{\eta}))$.
Write $\Sigma$ for the divisor of markings.
Since $T$ is characterized by geometric fibers, we may apply a finite base change to $S$ so that restriction to the geometric generic fiber induces
a bijection from the components of $C$ to the components of $C_{\ol{\eta}}$.
The level conditions on singularities and subcurves are stable under generization by an identical argument to Theorem \ref{thm:Q_deformation_open}.

Next, we consider condition (iii). Suppose $Z_1^{\ol{\eta}} \subset Z_2^{\ol{\eta}}$ is a proper inclusion of genus one subcurves of $C_{\ol{\eta}}$. Let $Z_1^{\ol{0}}$ and $Z_2^{\ol{0}}$
be the respective limits of $Z_1^{\ol{\eta}}$ and $Z_2^{\ol{\eta}}$ in $C_{\ol{0}}$. Then we must have a proper inclusion $Z_1^{\ol{0}} \subset Z_2^{\ol{0}}$. Taking limits of the connected components of $(\ol{C_{\ol{\eta}} - Z_1^{\ol{\eta}}}) \cup \Sigma|_{\ol{\eta}}$, we see that $\lev(Z_1^{\ol{0}}) = \lev(Z_1^{\ol{\eta}})$. Similarly, $\lev(Z_2^{\ol{0}}) = \lev(Z_2^{\ol{\eta}})$.
By $(c_P)$-stability of the central fiber, $\lev(Z_1^{\ol{0}}) \prec \lev(Z_2^{\ol{0}})$. Since $\lev(Z_1^{\ol{0}}) = \lev(Z_1^{\ol{\eta}})$ and
$\lev(Z_2^{\ol{0}}) = \lev(Z_2^{\ol{\eta}})$, we have the desired refinement $\lev(Z_1^{\ol{\eta}}) \prec \lev(Z_2^{\ol{\eta}})$ in the generic fiber too.

Finally, we show condition (iv) is stable under generization. 
Let
\begin{align*}
  Z^{\ol{\eta}}_{min} &= \text{ minimal genus one subcurve of }C_{\ol{\eta}} \\
  Z^{\ol{0}}_{min} &= \text{ minimal genus one subcurve of }C_{\ol{0}}
\end{align*}
Given an irreducible component $F_{\ol{0}}$ of $Z^{\ol{0}}_{min}$, there is a unique irreducible component $F_{\ol{\eta}}$ of $Z_{\ol{\eta}}$ to which $F_{\ol{0}}$ generizes. Moreover, as $F_{\ol{0}}$ varies over the components of $Z^{\ol{0}}_{min}$, $F_{\ol{\eta}}$ varies over all the irreducible components of $Z^{\ol{\eta}}_{min}$.

By axiom (iv), $F_{\ol{0}}$ either meets a marking or it meets a connected rational tail $T$ contained in $\ol{C_{\ol{0}} - Z^{\ol{0}}_{min}}$. If $F_{\ol{0}}$ itself contains a marking, we set $Y^1_{\ol{0}} = F_{\ol{0}}$ and let $k = 1$. If not, then we may find a path in the dual graph of $C_{\ol{0}}$ from $F_{\ol{0}}$ to a marked component of $T$, i.e., a sequence $Y^1_{\ol{0}}, \ldots, Y^k_{\ol{0}}$ of irreducible components of $C_{\ol{0}}$, where $Y^1_{\ol{0}} = F_{\ol{0}}$, $Y^{i+1}_{\ol{0}}$ meets $Y^i_{\ol{0}}$ in a node for all $1 \leq i < k$, $Y^i_{\ol{0}} \not\subseteq Z_{\ol{0}}$ for each $i > 0$, and $Y^k_{\ol{0}}$ contains a marking.  For each $i$ we let $Y^{i}_{\ol{\eta}}$ be the unique irreducible component of $C_{\ol{\eta}}$ generizing $Y^i_{\ol{0}}$. After reindexing to omit repeats, the resulting sequence of components $Y^1_{\ol{\eta}}, \ldots, Y^l_{\ol{\eta}}$ of $C_{\ol{\eta}}$ is again a sequence of components connected by nodes, beginning with $F_{\ol{\eta}}$, and ending in a component with a marking.

If all of the nodes of the path $Y^1_{\ol{0}}, \ldots, Y^k_{\ol{0}}$ smooth out in the geometric generic fiber, then $l = 1$, and $F_{\ol{\eta}}$ meets a marking, and we are done. If not, then let $p \in C_{\ol{0}}$ be the first node on the path that does not smooth out in the family. Then, after a finite base change if necessary, there is a section $P : S \to C$ through the singular locus of $C$ going through $p$ and meeting $F_{\ol{\eta}}$. Blowing up along $P$, we see that $Y^2_{\ol{\eta}}$ does not belong to $Z^{\ol{\eta}}_{min}$, since $P$ separates the limit of $Y^2_{\ol{\eta}}$ from $Z^{\ol{0}}_{min}$ in the central fiber. Then $F_{\eta}$ meets $\ol{C_{\ol{\eta}} - F_{\eta}}$ in the point $P|_{\ol{\eta}}$, and we deduce that axiom (iv) holds
in the generic fiber.
\end{proof}

\begin{corollary}
$\ol{\mathcal{M}}_{1,n}((c_P))$ is an Artin stack.
\end{corollary}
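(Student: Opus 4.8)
The plan is to realize $\ol{\mathcal{M}}_{1,n}((c_P))$ as an open substack of a manifestly algebraic stack of curves. Let $\mathcal{V}_{1,n}$ be the stack whose $S$-points are a flat, proper, finitely presented morphism $\pi : C \to S$ with one-dimensional fibers together with $n$ sections of $\pi$ with pairwise disjoint images, subject only to the requirement that every geometric fiber be connected, reduced, Gorenstein, and of arithmetic genus one. I would first check that $\mathcal{V}_{1,n}$ is algebraic. The stack of \emph{all} flat, proper, finitely presented morphisms with one-dimensional fibers and $n$ disjoint sections is algebraic by standard representability results (this is the ambient stack underlying the discussions of \cite{smyth_zstable} and \cite{smyth_mstable}); inside it, the conditions that the geometric fibers be connected, be reduced, and be Gorenstein each cut out an open substack by the usual semicontinuity statements for flat proper finitely presented morphisms, and the condition of arithmetic genus one cuts out an open (and closed) substack since arithmetic genus is locally constant on the base by cohomology and base change. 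Thus $\mathcal{V}_{1,n}$, being an open substack of an algebraic stack, is itself algebraic.

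Next I would observe that the $(c_P)$-stable families form an open substack of $\mathcal{V}_{1,n}$. They form a substack at all because the level conditions (i)--(iv) in the definition of $(c_P)$-stability are conditions on geometric fibers that are evidently stable under base change and may be verified after any fppf cover of the base, so they constitute a full subcategory of $\mathcal{V}_{1,n}$ closed under isomorphism and satisfying descent. That this substack is \emph{open} is exactly the content of the preceding deformation-openness theorem, applied to the universal family over any scheme mapping to $\mathcal{V}_{1,n}$. Since an open substack of an algebraic stack is again algebraic, $\ol{\mathcal{M}}_{1,n}((c_P))$ is an Artin stack, which is the assertion of the corollary.

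The only step with genuine content---and the one I would be most careful to cite correctly rather than reprove---is the algebraicity of the ambient stack $\mathcal{V}_{1,n}$, which ultimately rests on the standard but nontrivial algebraicity of the stack of all proper pointed curves; I would reference this from \cite{smyth_zstable} (or a standard source on Artin representability), after which everything else reduces to the openness of the fiberwise conditions listed above together with the deformation-openness already proved. I would also point out explicitly that, unlike for the $Q$-stable and $m$-stable spaces, we impose no finiteness hypothesis on automorphism groups here---indeed the remark preceding the corollary shows a $(c_P)$-stable curve can carry a $\mathbb{G}_m$ of automorphisms whenever $Q_{sing} \cap Q_{curve} \neq \emptyset$---which is precisely why the conclusion is that $\ol{\mathcal{M}}_{1,n}((c_P))$ is an Artin stack rather than a Deligne-Mumford stack.
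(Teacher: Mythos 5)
Your proof is correct and follows essentially the same route as the paper: the paper likewise deduces the corollary by combining the preceding deformation-openness theorem with the algebraicity of the ambient stack of all $n$-pointed curves, citing \cite[Appendix B]{smyth_zstable}. Your additional remarks---spelling out that connectedness, reducedness, the Gorenstein condition, and constancy of arithmetic genus are open, and noting why one only gets an Artin rather than Deligne--Mumford stack---are accurate elaborations of the same argument.
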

\begin{proof}
The preceding Theorem shows that $\ol{\mathcal{M}}_{1,n}((c_P))$ is an open substack of the Artin stack of all $n$-pointed curves. (See \cite[Appendix B]{smyth_zstable}.) 
\end{proof}

Our next result is that these various moduli spaces are related by containments induced by the containment of faces in the cube complex $X_{1,n}$. This strikes us as analogous
to the containments of moduli spaces seen at critical values of the log minimal model program in \cite[Theorem 1.1]{alper_fedorchuk_smyth_flip2},
except that the ``larger" stacks here are associated with larger strata.

\begin{theorem} \label{thm:moduli_of_faces}
Suppose that $(c_P), (d_P) \in X_{1,n}$ and $(d_P)$ lies in a face of the cube containing $(c_P)$, i.e., $d_P = c_P$ whenever $c_P = 0$ or $c_P = 1$. Then
there is a fully faithful inclusion functor
\[
  \ol{\mathcal{M}}_{1,n}((d_P)) \hookrightarrow \ol{\mathcal{M}}_{1,n}((c_P)).
\]
\end{theorem}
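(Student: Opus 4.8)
The plan is to show that every $(d_P)$-stable family is automatically $(c_P)$-stable, so the inclusion functor on $S$-points is simply ``the same family,'' and fully faithfulness is then immediate because both stacks are open substacks of the stack of all $n$-pointed curves (so a morphism of families is just a morphism of the underlying families of curves). Thus the entire content is the pointwise implication on geometric fibers.

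First I would unwind what the face condition $d_P = c_P$ whenever $c_P \in \{0, 1\}$ says about the associated subsets. Writing $Q_{sing}^c = \{P : c_P > 0\}$, $Q_{curve}^c = \{P : c_P < 1\}$, and similarly for $(d_P)$, the hypothesis gives: if $c_P = 0$ then $d_P = 0$, and if $c_P = 1$ then $d_P = 1$. Equivalently, $P \notin Q_{sing}^c \implies P \notin Q_{sing}^d$ (so $Q_{sing}^d \subseteq Q_{sing}^c$), and $P \notin Q_{curve}^c \implies P \notin Q_{curve}^d$ (so $Q_{curve}^d \subseteq Q_{curve}^c$). In other words, passing to $(d_P)$ can only \emph{shrink} both the set of allowed singularity levels and the set of allowed subcurve levels. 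Since $(d_P)$-stability conditions (i) and (ii) are membership conditions ($\lev(p) \in Q_{sing}^d$, $\lev(Z) \in Q_{curve}^d$), these immediately imply the corresponding conditions for $Q_{sing}^c \supseteq Q_{sing}^d$ and $Q_{curve}^c \supseteq Q_{curve}^d$. Conditions (iii) and (iv) of $(c_P)$-stability do not mention $(c_P)$ at all, so they transfer verbatim from the $(d_P)$-stable fiber. Hence every $(d_P)$-stable fiber is $(c_P)$-stable, and the same family of curves defines an object of $\ol{\mathcal{M}}_{1,n}((c_P))$.

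Then I would assemble these observations: define the functor on objects by sending a $(d_P)$-stable family $(\pi : C \to S, \sigma_1, \ldots, \sigma_n)$ to itself, now regarded as an object of $\ol{\mathcal{M}}_{1,n}((c_P))$ by the fiberwise check above; on morphisms, send an isomorphism of $(d_P)$-stable families to the same isomorphism. Since both $\ol{\mathcal{M}}_{1,n}((d_P))$ and $\ol{\mathcal{M}}_{1,n}((c_P))$ are, by the preceding Corollary, open substacks of the Artin stack $\mathfrak{M}$ of all $n$-pointed curves, the Hom-sets $\Hom_{\ol{\mathcal{M}}_{1,n}((d_P))}(C, C')$ and $\Hom_{\ol{\mathcal{M}}_{1,n}((c_P))}(C, C')$ are \emph{both} equal to $\Hom_{\mathfrak{M}}(C, C')$ for any two objects $C, C'$ of the smaller stack. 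Therefore the functor is fully faithful, and the proof is complete.

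The only point requiring the slightest care — and the closest thing to an obstacle — is bookkeeping the direction of the inequalities: $Q_{curve}$ is \emph{upward} closed and is the set where $c_P < 1$, so one must be careful that the face hypothesis shrinks $Q_{curve}^d$ inside $Q_{curve}^c$ rather than the reverse. Once the two containments $Q_{sing}^d \subseteq Q_{sing}^c$ and $Q_{curve}^d \subseteq Q_{curve}^c$ are correctly established, everything else is formal.
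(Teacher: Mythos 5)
Your proof is correct and follows essentially the same route as the paper's, which simply observes that $Q_{sing}^d \subseteq Q_{sing}^c$ and $Q_{curve}^d \subseteq Q_{curve}^c$ and declares the rest clear from the definition. Your version usefully makes the inequality-direction bookkeeping and the openness-in-the-Artin-stack argument for full faithfulness explicit, but it is the same proof.
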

\begin{proof}
Note that the sets $Q_{sing}$ and $Q_{curve}$ associated to $(d_P)$ are subsets of the respective sets associated to $(c_P)$. Then the result is clear from the definition.
\end{proof}

\begin{corollary}
The stacks $\ol{\mathcal{M}}_{1,n}((c_P))$ are universally closed.
\end{corollary}
\begin{proof}
Let $Q = \{ P \in \Part(n) \mid c_P = 1 \}$. By Theorem \ref{thm:moduli_of_faces} and Theorem \ref{thm:Q_at_vertices}, $\ol{\mathcal{M}}_{1,n}(Q)$ is a substack of $\ol{\mathcal{M}}_{1,n}((c_P))$.

Note that $\ol{\mathcal{M}}_{1,n}((c_P))$ contains $\mathcal{M}_{1,n}$ as a dense open, since nodes and elliptic Gorenstein singularities are smoothable. To check universal closedness it therefore suffices to
verify that if $S$ is the spectrum of a DVR with generic point $\eta$, and $C_\eta \to \eta$ a family of smooth $n$-marked curves of genus one, then there is an extension of $C_\eta$ to a family of $(c_P)$-stable
curves over $S$. Such an extension already exists as a $Q$-stable limit.
\end{proof}

\subsection{A tropical resolution of the birational map from $\ol{\mathcal{M}}_{1,n}$ to $\ol{\mathcal{M}}_{1,n}((c_P))$}

We now give the tropical construction that motivated the definition of $\ol{\mathcal{M}}_{1,n}((c_P))$ above.
For each $(c_P) \in X_{1,n}$ we will build a modification $\widetilde{\mathcal{M}}_{1,n}^{rad} \to \ol{\mathcal{M}}_{1,n}^{rad}$ and a contraction inducing
a morphism $\widetilde{\mathcal{M}}_{1,n}^{rad} \to \ol{\mathcal{M}}_{1,n}((c_P))$. The ideas are essentially the same as in Section \ref{sec:radially_aligned_contractions}, so we are relatively brief.

Fix an element $(c_P)_{P \in \PPart(n)} \in X_{1,n}$. Let $Q_{min} = \{ P \in \PPart(n) \mid c_P = 1 \}$. Let $P_1, \ldots, P_k$ be the partitions for
which $0 < c_{P_i} < 1$. Then let $Q_i = Q_{min} \cup \{ P_i \}$ for $1 \leq i \leq k$.
Consider the associated global sections $\rho_{min}, \rho_1, \ldots, \rho_k$ of the characteristic sheaf of $\ol{\mathcal{M}}_{1,n}^{rad}$. Notice that $\rho_i \geq \rho_{min}$ for each $i$, so the differences $\delta_i = \rho_{i} - \rho_{min}$ are again well-defined sections of the characteristic sheaf
of $\ol{\mathcal{M}}_{1,n}^{rad}$.

We recall (see for example \cite[Proposition 5.17]{olsson} with $P = \mathbb{N}$) that the stack $[\mathbb{A}^1 / \mathbb{G}_m]$ may be given a log structure so that $[\mathbb{A}^1 / \mathbb{G}_m]$ represents the functor on log
schemes
\[
  X \mapsto \Gamma(X, \ol{M}_X).
\]
Since this is a functor valued in commutative monoids, there is a morphism $\mu : [\mathbb{A}^1/\mathbb{G}_m] \times [\mathbb{A}^1/\mathbb{G}_m] \to [\mathbb{A}^1/\mathbb{G}_m]$ induced by the multiplication of $\Gamma(X, \ol{M}_X)$.

Take products and form the pullback square of fs log algebraic stacks
\[
\xymatrixcolsep{5pc}
\xymatrix{
  \widetilde{\mathcal{M}}_{1,n}^{rad} \ar[r]^-{\prod_i (\delta_i^{(1)}, \delta_i^{(2)})} \ar[d]_{p} & \left([\mathbb{A}^1/\mathbb{G}_m] \times [\mathbb{A}^1/\mathbb{G}_m]\right)^k \ar[d]^{\mu^k} \\
  \ol{\mathcal{M}}_{1,n}^{rad} \ar[r]^-{\prod_i \delta_i} & [\mathbb{A}^1/\mathbb{G}_m]^k.
}
\]
Note that $p^*\delta_i$ factors into the sum $\delta_i^{(1)} + \delta_i^{(2)}$ in the characteristic sheaf for each $i$. Moreover, $\widetilde{\mathcal{M}}_{1,n}^{rad}$ is universal with respect to such factorizations in the sense that, given any $g : T \to \ol{\mathcal{M}}_{1,n}^{rad}$
and an expression of each $g^*\delta_i$ as a sum $\alpha_i^{(1)} + \alpha_i^{(2)}$, there is a unique factorization $h : T \to \widetilde{\mathcal{M}}_{1,n}^{rad}$ of $g$ through $\widetilde{\mathcal{M}}_{1,n}^{rad}$ so that $\alpha_i^{(1)} = h^*\delta_i^{(1)}$ and $\alpha_i^{(2)} = h^*\delta_i^{(2)}$ for each $i$.

The morphism $\mu$ is integral and saturated, so the underlying algebraic stack of $\widetilde{\mathcal{M}}_{1,n}^{rad}$
is the fiber product of the underlying algebraic stacks of the rest of the diagram. The complement $D(\delta_1,\ldots,\delta_k)$ in $\ol{\mathcal{M}}^{rad}_{1,n}$ of the vanishing of the Cartier divisors associated to
$\delta_1, \ldots, \delta_k$ is precisely the preimage of the non-stacky point of $[\mathbb{A}^1/\mathbb{G}_m]^k$. As $\mu^k$ restricts to an isomorphism over this point, it follows that $p$ restricts to an isomorphism over $D(\delta_1,\ldots,\delta_k)$. In particular, since the $\delta_i$'s are non-vanishing on smooth curves, $p$ restricts to an isomorphism on $\mathcal{M}_{1,n}$. We identify $\mathcal{M}_{1,n}$ with its image in $\widetilde{\mathcal{M}}_{1,n}^{rad}$.

\begin{lemma} \label{lem:weird_ring_irreducible}
Let $T = \Spec B$ be the spectrum of a discrete valuation ring with uniformizer $t$ and field of fractions $K$.
Let $b_1,\ldots, b_k$ be nonzero elements of $B$, and let
\[
  A = \frac{B[x_1^{(1)},x_1^{(2)}, \ldots, x_k^{(1)}, x_k^{(2)}]}{(x_i^{(1)}x_i^{(2)} - b_i : i = 1,\ldots, k)}.
\]
Then $S = \Spec A$ is an integral scheme and its generic point maps to the generic point of $T$ under the natural map $S \to T$.
\end{lemma}

\begin{proof}
Let $S = \Spec A$, $T = \Spec B$. Let $\eta = D(t)$ be the generic point of $T$ and $U$ its preimage in $S$. Then $U \cong \G_{m,K}^k$ is an irreducible open of $S$. Recall that the multiplication map $\A^1 \times \A^1 \to \A^1$ is flat, so its $k$-fold product
$(\A^1 \times \A^1)^k \to (\A^1)^k$ is also flat. Observe that $S \to T$ is a pullback of this morphism, so also flat. If $x$ is any point of $S$ not in $U$, the Going Down Theorem for flat morphisms implies that $x$ possesses a generization in $U$. Therefore, $U$ is dense in $S$, and $S$ is integral. Since $U$ maps to the generic point of $T$, we also have the claim about generic points.
\end{proof}

\begin{proposition}
The open substack $\mathcal{M}_{1,n}$ is dense in $\widetilde{\mathcal{M}}_{1,n}^{rad}$. In particular $\widetilde{\mathcal{M}}_{1,n}^{rad}$ is irreducible and $p$ is birational.
\end{proposition}

\begin{proof}
Give $\A^1$ the toric log structure. Recall that $\A^1$ represents the functor
\[
  (X, \alpha : M_X \to \mathscr{O}_X) \mapsto \Gamma(X, M_X).
\]
There is a commutative square of fs log algebraic stacks
\[
\xymatrix{
  ([\A^1/\G_m] \times [\A^1/\G_m])^k \ar[d]^{\mu^k} & \ar[l] (\A^1 \times \A^1)^k \ar[d] \\
  [\A^1/\G_m]^k & (\A^1)^k \ar[l]
}
\]
where each vertical map is induced by the monoid law, and the horizontal maps are induced by $M_X \to \ol{M}_X$. The map of schemes underlying the right vertical map consists of $k$-copies of the multiplication map. By \cite[Proposition 2.1]{olsson},
a map $X \to [\A^1/\G_m]$ \'etale locally admits a lift to $\A^1$.

Let $s = (\Spec k, \alpha)$ be a geometric fs log point, and let $f_s : s \to \widetilde{\mathcal{M}}_{1,n}^{rad}$ be a log morphism. Our strategy is to find a smoothing of $f_s$ in $\ol{\mathcal{M}}_{1,n}$, then to lift this smoothing back to $\widetilde{\mathcal{M}}_{1,n}^{rad}$ using the commutative square above.

Observe that since $s$ is a geometric point,
the composite $s \to \widetilde{\mathcal{M}}_{1,n}^{rad} \to ([\A^1/\G_m]\times[\A^1/\G_m])^k$ factors through $(\A^1 \times \A^1)^k$.

Let $t = (\Spec k, \beta)$ denote the fs log point with log structure pulled-back from $\ol{\mathcal{M}}_{1,n}^{rad}$ along $p \circ f_s : s \to \ol{\mathcal{M}}_{1,n}^{rad}$.
Write $g_t$ for the induced map $t \to \ol{\mathcal{M}}_{1,n}^{rad}$. Note that we have a commutative square of fs log algebraic stacks
\[
  \xymatrix{
    s \ar[r]^-{f_s} \ar[d] & \widetilde{\mathcal{M}}_{1,n}^{rad} \ar[d]^p \\
    t \ar[r]^-{g_t} & \ol{\mathcal{M}}_{1,n}^{rad}.
  }
\]
By construction, $g_t$ is strict. We may therefore use that $\mathcal{M}_{1,n}$ is dense in the Noetherian algebraic stack $\ol{\mathcal{M}}_{1,n}^{rad}$ to find a strict map $g : T \to \ol{\mathcal{M}}_{1,n}^{rad}$ where
\begin{enumerate}
  \item $T$ is an fs log scheme with underlying scheme the spectrum of a discrete valuation ring;
  \item $g_t$ factors through the special point of $T$;
  \item $\eta \subseteq T$ is the generic point;
  \item $g|_\eta \in \mathcal{M}_{1,n}(\eta)$.
\end{enumerate}
Replacing $T$ by a finite base change if necessary, we may assume that the
composite $T \to \ol{\mathcal{M}}_{1,n}^{rad} \to [\mathbb{A}^1/\mathbb{G}_m]^k$ factors through $(\mathbb{A}^1)^k$.
A diagram chase shows that there is an induced map $f : S \to \widetilde{\mathcal{M}}_{1,n}^{rad}$ and $f_s : s \to \widetilde{\mathcal{M}}_{1,n}^{rad}$ factors through it.
By Lemma \ref{lem:weird_ring_irreducible}, $S$ is irreducible and its
generic point $\theta$ maps to the generic point $\eta$ of $T$. By construction, $g(\eta)$ lies in the smooth locus of $\ol{\mathcal{M}}_{1,n}^{rad}$, so $f(\theta)$ lies in the smooth locus of $\widetilde{\mathcal{M}}_{1,n}^{rad}$. It follows that the image of $s$ in $\widetilde{\mathcal{M}}_{1,n}^{rad}$ has a generalization to a point factoring through $\mathcal{M}_{1,n} \subseteq \widetilde{\mathcal{M}}_{1,n}^{rad}$, as desired.
\end{proof}

Now, let $\rho = p^*\rho_{min} + \delta_1^{(1)} + \cdots + \delta_k^{(1)}$ in the characteristic sheaf of $\widetilde{\mathcal{M}}_{1,n}^{rad}$.
Let $C_{1,n} \to \widetilde{\mathcal{M}}_{1,n}^{rad}$ be the pullback of the universal family of $\ol{\mathcal{M}}_{1,n}^{rad}$ to $\widetilde{\mathcal{M}}_{1,n}$
Notice that $\rho$ is comparable with the radii of $C_{1,n}$, so we may make a log modification $\widetilde{C}_{1,n} \to C_{1,n}$ subdividing tropicalizations at
the locus where $\rho = \lambda$. Then, as in Theorem \ref{thm:contraction}, we may form a section $\ol{\lambda} \in \Gamma(\widetilde{C}, \ol{\mathcal{M}}_{\widetilde{C}})$ by the formula
\[
  \ol{\lambda} = \max\{ \rho - \lambda, 0 \}.
\]
Once again, it is easy to check that this is a mesa in the sense of \cite{bozlee_thesis}, so the main result of \cite{bozlee_thesis} yields a contraction
of families of curves
\[
  \xymatrix{
    \widetilde{C}_{1,n} \ar[r]^\tau \ar[d]_{\pi} & \ol{C}_{1,n} \ar[dl]^{\ol{\pi}} \\
    \widetilde{\mathcal{M}}_{1,n}^{rad}
  }
\]
Similar reasoning to that of Section \ref{sec:radially_aligned_contractions} yields that $\ol{C}_{1,n} \to \widetilde{\mathcal{M}}_{1,n}^{rad}$
is a family of curves in $\ol{\mathcal{M}}_{1,n}((c_P))$. We therefore have a diagram of birational morphisms of algebraic stacks
\[
\xymatrix{
 & & \widetilde{\mathcal{M}}_{1,n}^{rad} \ar[dr]^{\ol{C}_{1,n}} \ar[dl]_{p} & \\
 & \ol{\mathcal{M}}_{1,n}^{rad} \ar[dl] & & \ol{\mathcal{M}}_{1,n}((c_P))\\
  \ol{\mathcal{M}}_{1,n} & & & \\
}.
\]

\bibliographystyle{amsalpha}
\bibliography{references}

\end{document}